\documentclass[11pt]{article}
\usepackage[margin=1in]{geometry}
\usepackage{amsmath,amssymb,amsthm,mathtools}
\usepackage[T1]{fontenc}
\usepackage{booktabs,longtable,array}
\usepackage{tikz}
\usepackage{xcolor}
\usetikzlibrary{arrows.meta,positioning}
\usepackage[colorlinks=true,linkcolor=blue,citecolor=blue,urlcolor=blue]{hyperref}
\hypersetup{pdftitle={The M\'ezard--Parisi formula},pdfauthor={Dmitry Panchenko}}
\newcommand{\E}{\mathbb E}
\newcommand{\Prob}{\mathbb P}
\newcommand{\N}{\mathbb N}

\newcommand{\Cov}{\operatorname{Cov}}
\newcommand{\supp}{\operatorname{supp}}
\newcommand{\Av}{\operatorname{Av}}

\newcommand{\diam}{\operatorname{diam}}
\newcommand{\conv}{\operatorname{conv}}

\newcommand{\cF}{\mathcal F}
\newcommand{\cG}{\mathcal G}
\newcommand{\cH}{\mathcal H}
\newcommand{\cT}{\mathcal T}

\newcommand{\cB}{{\mathcal{B}}}
\newcommand{\cP}{\mathcal P}
\newcommand{\cU}{\mathcal U}

\newcommand{\cA}{\mathcal A}
\newcommand{\cZ}{\mathcal Z}
\newcommand{\Sp}{\mathsf P}
\newcommand{\one}{\mathbf 1}

\makeatletter
\newsavebox\myboxA
\newsavebox\myboxB
\newlength\mylenA
\newcommand*\mybar[2][0.75]{%
    \sbox{\myboxA}{$\m@th#2$}%
    \setbox\myboxB\null
    \ht\myboxB=\ht\myboxA%
    \dp\myboxB=\dp\myboxA%
    \wd\myboxB=#1\wd\myboxA
    \sbox\myboxB{$\m@th\overline{\copy\myboxB}$}
    \setlength\mylenA{\the\wd\myboxA}
    \addtolength\mylenA{-\the\wd\myboxB}%
    \ifdim\wd\myboxB<\wd\myboxA%
       \rlap{\hskip 0.5\mylenA\usebox\myboxB}{\usebox\myboxA}%
    \else
        \hskip -0.5\mylenA\rlap{\usebox\myboxA}{\hskip 0.5\mylenA\usebox\myboxB}%
    \fi}
\makeatother

\newtheorem{theorem}{Theorem}[section]
\newtheorem{lemma}[theorem]{Lemma}
\newtheorem{proposition}[theorem]{Proposition}

\newtheorem{definition}[theorem]{Definition}
\theoremstyle{remark}
\newtheorem{remark}[theorem]{Remark}
\numberwithin{equation}{section}
\title{The M\'ezard--Parisi formula}
\author{Dmitry Panchenko\thanks{\tiny Department of Mathematics, University of Toronto, panchenk@math.toronto.edu. Partially supported by NSERC grant.}
}
\date{}
\begin{document}
\maketitle

\begin{abstract}
This paper proves the M\'ezard--Parisi formula for the free energy of random $K$-sat and diluted $p$-spin models with even $p$, completing an approach developed in earlier work of the author and collaborators. The upper bound was established by Franz and Leone. Here the matching lower bound is proved for a general class of diluted models under symmetry and integrability assumptions. The main new contribution is a proof of an approximate form of the M\'ezard--Parisi reproducibility hypothesis without assuming that the overlap distribution has finite support.
\end{abstract}

\begingroup
\setlength{\parskip}{0pt}
\tableofcontents
\endgroup
\clearpage

\section{Introduction}\label{sec:intro}

The Parisi solution \cite{Parisi79, Parisi80} of the
Sherrington-Kirkpatrick model \cite{SK} introduced what
is now called the Parisi ansatz, which became central
to the subsequent development of spin glass theory;
see \cite{MPV}. Twenty years later, this ansatz was
the starting point of the M\'ezard--Parisi approach to
diluted models \cite{MP1}. The diluted setting,
however, required an additional hypothesis, usually
called {reproducibility}: multioverlaps should
be determined by the pairwise overlaps of the replicas
involved.

The need for this additional information comes from
the different nature of the cavity fields. In the
SK model, these fields are Gaussian, and their
covariance is expressed in terms of pair overlaps.
In diluted models, their distribution involves the
full spin distribution, or equivalently all
multioverlaps, and the ultrametric organization of the
overlaps describes only part of the structure needed for the M\'ezard--Parisi formula.

The Parisi formula for the SK model was proved
rigorously by Talagrand \cite{TPF}, following the
breakthrough work of Guerra \cite{Guerra}. A different
proof, which also extended the formula to general
mixed $p$-spin models, was given in \cite{PPF}. It
used the fact, proved in \cite{Ultra}, that the
Ghirlanda--Guerra identities \cite{GG} imply
ultrametricity. These identities can also be enforced
in diluted models by a small perturbation, so the
ultrametric part of the Parisi picture is available
in this setting as well.

The analogue of Guerra's upper bound for diluted
models was obtained by Franz and Leone \cite{FL};
see also \cite{PT}, but the matching M\'ezard--Parisi
lower bound remained elusive. One approach to this
problem, based on the Ghirlanda--Guerra identities
and cavity equations, was developed in the series
of papers \cite{Spins, HEPS, AP, 1RSB, rRSB}.

In \cite{Spins}, a Poissonization technique was
introduced to justify cavity computations of the
Aizenman--Sims--Starr type \cite{ASS} and derive
cavity equations for spin distributions. In
\cite{HEPS}, the Ghirlanda--Guerra identities were
shown to imply {hierarchical exchangeability}
(the distribution of the spin arrays within clusters
is invariant under permutations preserving the
ultrametric tree structure), and in \cite{AP} an analogue
of the Aldous-Hoover representation was proved for
arrays with this symmetry. The resulting
representation was remarkably close to the
M\'ezard--Parisi ansatz, but contained additional
randomness whose removal was the remaining obstacle to proving reproducibility.

An approach to proving it was developed first for certain $1$-RSB diluted $p$-spin models in \cite{1RSB}, and then for the
finite-RSB case in \cite{rRSB}. In the latter paper,
an additional small perturbation supplied
constraints through the cavity equations that
enforced reproducibility. The proof involved a
technical induction on tree shapes, and
did not provide a way to control approximation
errors as the number of overlap levels increased.
Thus, the passage from finite-RSB to a general
overlap distribution remained open.

The purpose of this paper is to remove the
finite-RSB assumption. We prove an approximate
M\'ezard--Parisi structure, sufficient to obtain
the M\'ezard--Parisi lower bound, without assuming
that the overlap distribution has finite support.
Combined with the Franz--Leone upper bound, this
proves the M\'ezard--Parisi formula for the diluted
even $p$-spin and random $K$-sat models considered
below. The lower bound itself applies to a broader
class of diluted models.

The strategy follows \cite{rRSB} closely, with
several new ingredients. The main ingredient is a different
perturbation: a diluted version of the Random Energy
Model (REM) of Derrida \cite{DerridaREM}. Its random
energy vectors on the hypercube have full support, which allows us to prescribe arbitrary clauses to test the cavity equations. This freedom allows us to simplify the induction in \cite{rRSB} and organize it by the number of replicas instead of tree shapes. In this sense, the proof here is structurally simpler than the finite-RSB case in \cite{rRSB}, with much of the additional length due to a quantitative form of the argument. 

The exact identities used in the finite-RSB setting are now replaced by approximate estimates whose errors, for each fixed number of
replicas, remain controlled uniformly in the number
of overlap levels. We can therefore work with
increasingly fine finite partitions of a general
overlap distribution. A compactness argument
completes the passage from the tested cavity
estimates to the required reproducibility.
The proof strategy is described in more detail
in Section \ref{sec:outline}.

\paragraph{Some related work.}
The M\'ezard--Parisi ansatz was developed further at zero
temperature by M\'ezard and Parisi \cite{MPzero}, while Franz, Leone,
Ricci-Tersenghi and Zecchina \cite{FLRZ} studied finite-connectivity
$p$-spin and optimization models using a one-step replica-symmetry-breaking ansatz. Its application to random satisfiability was developed by
M\'ezard, Parisi and Zecchina \cite{MPZ} and M\'ezard and Zecchina
\cite{MZcavity}. Mertens, M\'ezard and Zecchina \cite{MMZ} used
these cavity equations to derive predictions for the satisfiability
threshold, including its large-$k$ asymptotics.

On the mathematical side, De Sanctis \cite{DeSanctis}
established an extended variational principle for the Viana-Bray model
in terms of random multi-overlap structures. Coja-Oghlan and Perkins
\cite{CojaPerkins} proved a decomposition into Bethe states for a broad
family of random regular factor-graph models and, under a positivity
condition, obtained variational formulas for their free energy.
Further work on the full replica-symmetry-breaking description of
diluted models includes Parisi's study of marginal stability
\cite{ParisiBethe} and Concetti's martingale formulation of the
hierarchical free-energy functional on random regular graphs
\cite{Concetti}.

For random $K$-sat, Coja-Oghlan and Panagiotou \cite{Coja2} proved the predicted large-\(k\) asymptotics of the satisfiability threshold, up to an additive error tending to zero as \(k\to\infty\). Ding, Sly and Sun \cite{DSS} established the exact threshold for all sufficiently large \(k\), confirming the one-step replica-symmetry-breaking prediction.

\paragraph{Acknowledgement on the use of AI.}
The proof in this paper was found by ChatGPT Astra. I asked it to look for a new perturbation in the argument of \cite{rRSB} and try to simplify the
induction. It returned an initial version of the argument
developed below. I have read, checked, and rewritten the proof. ChatGPT Astra and Claude Fable assisted me with the rewriting. 

A few days after the proof presented here was found, OpenAI released another proof of the M\'ezard--Parisi formula in \cite{OAI}. That proof appears to follow a different approach.

\paragraph{Acknowledgements.} I would like to thank Giorgio Parisi for his help in preparing this paper.

\section{Models and main result}\label{sec:models}

\subsection{The diluted models}
Fix an integer $K\ge2$. Let $\Sigma_N=\{-1,+1\}^N$, and let $\theta:\{-1,+1\}^K\to\mathbb R$ be a random clause whose law is invariant under permutations of its inputs. Independent copies are denoted by $\theta_a$. For a connectivity parameter $\lambda>0$, define
\begin{equation}\label{eq:Hmodel}
 H_N^{\rm model}(\sigma)
 =\sum_{a=1}^{\pi_0(\lambda N)}
       \theta_a(\sigma_{i_{1,a}},\ldots,\sigma_{i_{K,a}}),
 \qquad
 F_N^{\rm model}=\frac1N\E\log\sum_{\sigma\in\Sigma_N}e^{H_N^{\rm model}(\sigma)}.
\end{equation}
Here $\pi_0(\lambda N)$ is Poisson with mean $\lambda N$, and all indices $i_{j,a}$ are independent and uniform on $\{1,\ldots,N\}$. The Gibbs measure is proportional to $e^{H_N^{\rm model}}$. The main examples, at inverse temperature $\beta>0$, are
\begin{align}
 \theta(\sigma_1,\ldots,\sigma_K)&=\beta J\sigma_1\cdots\sigma_K
                 &&\text{(diluted $K$-spin)},\label{eq:pspin}\\
 \theta(\sigma_1,\ldots,\sigma_K)&=-\beta\prod_{j=1}^K\frac{1+J_j\sigma_j}{2}
                 &&\text{(random $K$-sat)}.\label{eq:ksat}
\end{align}
In the first example $J$ is standard Gaussian or a symmetric sign; in the second, the $J_j$ are independent symmetric signs. For a general clause law, we impose the integrability condition
\begin{equation}\label{eq:clauseintegrability}
 J_0=\max_{\sigma\in\{-1,+1\}^K}|\theta(\sigma)|,
 \qquad \E e^{tJ_0}<\infty\quad\text{for every }t>0.
\end{equation}
Both examples satisfy this condition. It is a convenient
sufficient condition for the estimates below; the precise
constant used in the structural proof is given in
\eqref{eq:Cns}.

As in \cite{rRSB}, we will use the same symbol for a clause and its extension to spin means. For a clause with $d$ inputs and $x_j\in[-1,1]$, we define
\begin{equation}\label{eq:extension}
 e^{\theta(x_1,\ldots,x_d)}
 =\sum_{\sigma_1,\ldots,\sigma_d=\pm1}
    e^{\theta(\sigma_1,\ldots,\sigma_d)}
       \prod_{j=1}^d\frac{1+\sigma_jx_j}{2}.
\end{equation}
In words: the Boltzmann factor, not its logarithm, is the average of the Boltzmann factors at the corners of the cube, when the spins are independent with means $x_j$. Thus $e^{\theta(x)}$ is affine in each input $x_j$ separately. 

\subsection{The M\'ezard--Parisi formula}\label{sec:MPformula}
For an integer $r\ge1$, let
\[
 \cA_r=\bigcup_{j=0}^r\N^j,\qquad \N^0=\{*\},
\]
be the rooted tree in which every vertex of depth less
than $r$ has children indexed by $\N$. Write
$\beta\preceq\alpha$ when $\beta$ is an initial segment
of $\alpha$, and let
$p(\alpha)=\{\beta:*\preceq\beta\preceq\alpha\}$ be
the path from the root to $\alpha$, including the root.
The depth is $|\alpha|$, and $\alpha\wedge\beta$ is the
depth of their last common ancestor.

Choose parameters $0<\zeta_0<\cdots<\zeta_{r-1}<1$
and let $(V_\alpha)_{\alpha\in\N^r}$ be the weights
of the corresponding Ruelle probability cascade
\cite{Ruelle}, with children ordered by decreasing
total mass. We refer to \cite[Chapter 2]{SKbook} for
the construction and its basic properties.
The additional functional order parameter is a
measurable function $h_0:[0,1]^r\to[-1,1]$. For every
input site $I$, take an independent array of uniform
variables $(Z_\beta^I)_{\beta\in\cA_r\setminus\{*\}}$,
also independent of the weights, and set
\begin{equation}\label{eq:MPstandard}
 m_\alpha^I
   =h_0((Z_\beta^I)_{\beta\in p(\alpha)\setminus\{*\}}),
       \qquad \alpha\in\N^r.
\end{equation}
The same function is used at every site; the site
arrays are independent copies.

Let $N_1\sim\operatorname{Pois}(\lambda K)$ and
$N_2\sim\operatorname{Pois}(\lambda(K-1))$. With
independent clauses and input-site arrays, define
\begin{align}
 A_\alpha^0(\varsigma)
   &=\sum_{a=1}^{N_1}\theta_{1,a}
       (m_\alpha^{1,a,1},\ldots,m_\alpha^{1,a,K-1},\varsigma),
         \qquad \varsigma=\pm1,\label{eq:MPcavity0}\\
 B_\alpha^0
   &=\sum_{a=1}^{N_2}\theta_{2,a}
       (m_\alpha^{2,a,1},\ldots,m_\alpha^{2,a,K}).
                                                        \label{eq:MPcorrection0}
\end{align}
All counts, clauses and site variables are independent
of each other and of the cascade weights.
Writing $\Av_\varsigma f(\varsigma)=(f(1)+f(-1))/2$,
the M\'ezard--Parisi functional is
\begin{equation}\label{eq:MPfunctional0}
 \cP_0(r,\boldsymbol\zeta,h_0)
 =\log2+\E\log\sum_\alpha V_\alpha
                         \Av_\varsigma e^{A_\alpha^0(\varsigma)}
        -\E\log\sum_\alpha V_\alpha e^{B_\alpha^0}.
\end{equation}
The expectation averages all the randomness just
introduced. The infimum below is over $r$, the
cascade parameters and the measurable functions $h_0$.

The following is the main result of the paper.
\begin{theorem}[M\'ezard--Parisi formula]\label{thm:MP}
For the diluted $K$-spin model \eqref{eq:pspin} with
even $K\ge2$ and $J$ standard Gaussian or a symmetric
sign, and for random $K$-sat \eqref{eq:ksat} with
$K\ge2$, the free energy has the limit
\begin{equation}\label{eq:MPformula}
 \lim_{N\to\infty}F_N^{\rm model}
   =\inf_{r,\boldsymbol\zeta,h_0}\cP_0(r,\boldsymbol\zeta,h_0).
\end{equation}
More generally, for every clause law satisfying the
symmetry and integrability \eqref{eq:clauseintegrability},
\begin{equation}\label{eq:lowerfinal}
 \liminf_{N\to\infty}F_N^{\rm model}
   \ge\inf_{r,\boldsymbol\zeta,h_0}\cP_0(r,\boldsymbol\zeta,h_0).
\end{equation}
\end{theorem}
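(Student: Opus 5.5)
The upper bound $\limsup_N F_N^{\rm model}\le\inf\cP_0$ for the diluted even-$K$ spin model and for random $K$-sat is the Franz--Leone interpolation bound \cite{FL} (see also \cite{PT}). We take it as given: the interpolation between the diluted Hamiltonian and the cavity system built from $h_0$ and the cascade has a derivative of definite sign exactly when $K$ is even, or for the $K$-sat clause. So the content of the theorem is the general lower bound \eqref{eq:lowerfinal}, and the plan follows the Aizenman--Sims--Starr scheme of \cite{Spins, rRSB}.

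\textbf{Reduction to a structural statement.} First, add to the Hamiltonian a small perturbation of vanishing free-energy cost. It consists of a Poisson number of diluted terms that enforce the Ghirlanda--Guerra identities, together with the new diluted REM perturbation, whose random energy vectors have full support on the hypercube. Next, Poissonize the number of clauses and write
\[
F_N^{\rm model}\ge\liminf\bigl(\E\log Z_{N+1}-\E\log Z_N\bigr)
\]
along a subsequence. Following \cite{Spins}, each increment is expressed as $\log2+\E\log\langle\Av_\varsigma e^{A(\varsigma)}\rangle-\E\log\langle e^{B}\rangle$. Here $A$ and $B$ are built from clauses evaluated at spins of uniformly chosen sites, with $\operatorname{Pois}(\lambda K)$ and $\operatorname{Pois}(\lambda(K-1))$ many terms. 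Along a further subsequence the spin arrays converge in distribution to an asymptotic Gibbs measure. This measure satisfies the Ghirlanda--Guerra identities, so by \cite{Ultra} it is ultrametric, and by \cite{HEPS, AP} it admits a hierarchically exchangeable Aldous--Hoover-type representation. In that representation the spin means are $h(\omega,(u_\beta)_{\beta\preceq\alpha},v_i,\dots)$, with extra dependence on the global randomness $\omega$ and on a site variable shared across levels. The increment functional is continuous in the joint law of multioverlaps, which is guaranteed by the exponential integrability \eqref{eq:clauseintegrability}. The limit therefore equals the same functional evaluated on this representation.

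\textbf{Core step: approximate reproducibility.} What remains is to show the following. If the overlap distribution is discretized into an $r$-level finite partition and the Gibbs measure is replaced by the corresponding cascade with weights $V_\alpha$, then, up to an error that vanishes as the partition is refined, the spin means at leaves are of Mézard--Parisi form $h_0((Z_\beta)_{\beta\in p(\alpha)\setminus\{*\}})$ with independent site arrays. This is an approximate version of the reproducibility hypothesis.

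To prove it, I would use the cavity equations of \cite{rRSB}: the joint distribution of spins under the Gibbs measure is invariant under adding a cavity field. Because the REM perturbation can realize arbitrary clauses, these equations hold for every test clause. I would derive them in quantitative form. For $n$ replicas and finitely many test clauses, the cavity identities hold with an error $\varepsilon_n$ that does not depend on the number of levels $r$.

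The induction then runs on the number of replicas $n$. Using the invariance under all test functions on $n$ replicas, I would show that the conditional law of the site spin means given the cascade path becomes independent of $\omega$ and of the non-path randomness. The mechanism is a variance-type argument: the cavity equations force equality of moments like $\E\langle\prod_l \sigma^l\rangle$ computed under two different reweightings, and with the freedom in the clause choice this gives equality of conditional expectations. A compactness argument over the space of laws then extracts, as the partition is refined, an honest $h_0$ with error tending to zero. Finally, plugging this $h_0$ into \eqref{eq:MPfunctional0} gives that the Aizenman--Sims--Starr limit is at least $\cP_0(r,\boldsymbol\zeta,h_0)-o(1)$, hence at least the infimum.

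\textbf{Main obstacle.} I expect the hard step to be making the error control uniform in the number of overlap levels $r$. In \cite{rRSB} the induction over tree shapes compounds errors with depth. The plan is to organize the induction by the number of replicas only, which is where the full support of the REM energies is used, so that the constants depend on $n$ and on the clause moments but not on $r$. The first thing I would try is to express each cavity identity as an average over $\operatorname{Pois}$ perturbation terms whose derivative in the perturbation parameter is bounded by a quantity in $L^1$ that is uniform in $r$.
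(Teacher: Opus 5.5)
\textbf{Verdict: genuine gap.} Your outer architecture matches the paper:
\begin{itemize}
\item Franz--Leone for the upper bound;
\item an Aizenman--Sims--Starr lower bound over laws satisfying \eqref{eq:GG} and \eqref{eq:SC};
\item the hierarchical representation of Lemma \ref{lem:A3};
\item approximate reproducibility with errors uniform in the number of levels, proved by induction on the number of replicas using the full support of the REM tables.
\end{itemize}
The gap is in the core step, and it starts with the one concrete mechanism you propose. It is not true that, because the REM perturbation can realize arbitrary clauses, the cavity equations ``hold for every test clause.'' They hold only after averaging the cavity disorder. The left side of \eqref{eq:SC} does not involve the counts, couplings and REM tables, whereas the right side does, so you cannot condition on a table being close to a prescribed clause.

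Full support yields only an event $E_t$ of positive probability, independent of the cut. The only thing that can be localized to such an event is a \emph{nonnegative} quantity that is small on average, and the missing idea is to manufacture one. The paper does this as follows. It takes the squared conditional covariance of two cavity outputs in one state, under the changed path density. It writes its disorder average through four branches separating at depth $p$, as in \eqref{eq:fourcopy}, and transfers it from barycenters by Proposition \ref{prop:transfer} with $n=4$, $s=2$. It bounds the barycenter side by the Gram-kernel estimate $\delta^2/4$ of Lemma \ref{lem:gram}, giving Proposition \ref{prop:cov}. Only then does it restrict to $E_t$ and divide by $\Prob(E_t)$ (Proposition \ref{prop:prescribedcov}). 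Your ``equality of conditional expectations under two reweightings'' has no such nonnegative carrier, and it rests on identities that are not available.

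The proposal also does not locate where uniformity in $r$ comes from. The quantitative cavity identity is already uniform in the depth (Proposition \ref{prop:transfer}), so bounding derivatives in the perturbation parameter does not address it. The danger lies in propagating control through the levels, and across a number of shapes that grows with $r$. The paper handles this with four ingredients:
\begin{itemize}
\item An induction quantity $\kappa_k$ built from the covariance norm $\|\Pi_pF_T^p\|_p$, with a supremum over retained depths. Branch replacement then costs $k^4B_k\max_{m<k}\kappa_m$, with no dependence on stem length (Lemma \ref{lem:branch}).
\item Differentiation in the two unary fields (Lemma \ref{lem:hessian}) to factorize the joint path density, with the summation by parts \eqref{eq:abel} keeping all densities bounded uniformly in $r$. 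This turns the two-site covariance into one-site control (Proposition \ref{prop:response}).
\item The probe with multiplier $1+L$ and output $L/(1+L)$. Its marked-replica expansion and mixed coefficients give future multioverlaps weighted by graph polynomials of the overlap matrix (Propositions \ref{prop:mixedcoeff}--\ref{prop:tests}).
\item Because these control only tested averages over shapes, a compactness argument on the fixed space of physical overlap matrices, using product domination and the submartingale depth-variation bound (Theorem \ref{thm:compact}). This excludes cancellation on ever finer sets.
\end{itemize}
A compactness argument ``over the space of laws'' that extracts a limiting $h_0$ is neither this nor needed. The paper compares $\cP_\epsilon(G)$ with the functional of each cut's resampled field and never passes to a limiting trial.

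Three smaller points also need repair.
\begin{itemize}
\item The REM perturbation is not of vanishing free-energy cost. It must stay at fixed strength $\epsilon$ (cost $C\sqrt\epsilon$) so that it survives in the limiting cavity fields. The fine-cut limit is taken first, and $\epsilon\downarrow0$ last via \eqref{eq:Premove}.
\item You should not aim for independence from the global variable $\omega$ (the paper's $U_*$). Only the nonroot state variables are controlled, and $U_*$ is harmless because it is averaged outside the logarithms.
\item The resampled field carries a root site coordinate. It must be removed by prepending a level with small $\zeta'$, as in \eqref{eq:rootcompare}.
\end{itemize}
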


\paragraph{The Franz--Leone upper bound.}
For the two classes of models in \eqref{eq:MPformula},
the opposite inequality holds at every $N$:
\begin{equation}\label{eq:FLupper}
 F_N^{\rm model}
   \le\inf_{r,\boldsymbol\zeta,h_0}\cP_0(r,\boldsymbol\zeta,h_0).
\end{equation}
Franz and Leone \cite{FL} proved this for even $K$
in both models; see also \cite{PT}. Talagrand observed
that the argument applies to every $K\ge2$ in $K$-sat
\cite{Talagrand,rRSB}. These interpolation bounds are
the diluted counterparts of Guerra's bound \cite{Guerra}
for the Sherrington-Kirkpatrick model.
It remains to prove \eqref{eq:lowerfinal}, and the
rest of the paper is devoted to this lower bound.
The proof does not require the restrictions used
for the upper bound.

\section{The cavity lower bound}\label{sec:lowerbound}

\subsection{The diluted REM perturbation}\label{sec:REMperturbation}
To prove the lower bound, we add a small perturbation
that will be used to enforce M\'ezard--Parisi reproducibility.
Its strength $\epsilon$ remains positive during the
structural argument and is sent to zero at the end.

Fix $\epsilon>0$. For every $d\ge2$, let $\theta_{d,a}^{\epsilon}$ be independent REM clauses, meaning that their $2^d$ values are independent Gaussians:
\begin{equation}\label{eq:tables}
 \bigl(\theta_{d,a}^{\epsilon}(\sigma)\bigr)_{\sigma\in\{-1,+1\}^d},
 \qquad \theta_{d,a}^{\epsilon}(\sigma)\sim N(0,\epsilon 2^{-4d}).
\end{equation}
Let $g_i\sim N(0,\epsilon)$, let $\pi_d(N)$ be Poisson with mean $N$, and take independent uniform indices $j_{d,a,t}$. All these variables are mutually independent and independent of the original model. Set
\begin{align}
 H_N^{\mathrm{pert},\epsilon}(\sigma)
 &=\sum_{i=1}^N g_i\frac{1+\sigma_i}{2}
   +\sum_{d=2}^{\infty}\sum_{a=1}^{\pi_d(N)}
     \theta_{d,a}^{\epsilon}
       (\sigma_{j_{d,a,1}},\ldots,\sigma_{j_{d,a,d}}),\label{eq:Hpert}\\
 H_N^\epsilon&=H_N^{\rm model}+H_N^{\mathrm{pert},\epsilon},
 \qquad F_N^\epsilon=\frac1N\E\log\sum_\sigma e^{H_N^\epsilon(\sigma)}.
                                                        \label{eq:Hfull}
\end{align}
This perturbation has the same extensive Poisson form as the one in \cite{rRSB}. The difference is in the clauses: in \cite{rRSB} a clause of arity $d$ is $g_d\prod_j(1+\sigma_j)/2$ with a single Gaussian coefficient, while a REM clause has $2^d$ independent Gaussian values. The power of this new choice of perturbation will become clear in Section \ref{sec:probes}, where it will allow us to prescribe an arbitrary positive multiaffine Boltzmann factor in the cavity equations. In particular, this will significantly simplify the induction compared to \cite{rRSB}, making it feasible to extend the finite-RSB result to general overlap distributions.

For $J_d=\max_\sigma|\theta_d^\epsilon(\sigma)|$, the Gaussian maximum bound gives
\begin{equation}\label{eq:tablemax}
 \E J_d\le C\sqrt\epsilon\sqrt d\,2^{-2d}.
\end{equation}
Consequently the series in \eqref{eq:Hpert} converges absolutely, uniformly over the finite set $\Sigma_N$, almost surely, and
\begin{equation}\label{eq:pressurepert}
 |F_N^\epsilon-F_N^{\rm model}|
 \le\E|g|+\sum_{d\ge2}\E J_d\le C\sqrt\epsilon.
\end{equation}

\subsection{Asymptotic Gibbs measures}\label{sec:asymptotic}
Sample infinitely many replicas from the Gibbs measure
of $H_N^\epsilon$ and consider the joint law of their
spins, averaged over the disorder. These laws are
exchangeable in the replica index and in the $N$ site
indices. Subsequential limits of their finite-dimensional
distributions, as $N\to\infty$, are therefore separately
exchangeable infinite arrays. By the Aldous-Hoover
representation \cite{Aldous,Hoover,Kallenberg}, in the case of $\pm 1$ spins, such a limit can be described by a random probability measure $G$ on
\[
 \mathcal H=\{m\in L^2([0,1]): |m(v)|\le1
                                  \text{ almost everywhere}\}.
\]
This is the asymptotic Gibbs measure of
\cite{Spins,rRSB}; see also \cite{Austin} for a formulation
through exchangeable random measures. A \emph{state}
is a point of $\supp G$. Conditionally on $G$, sample
independent replicas $m^1,m^2,\ldots$ from $G$. With
independent uniform variables $v_i$ and $x_{i,\ell}$,
independent also of $G$ and the sampled states, the Aldous-Hoover
representation for the $\pm 1$ spins can be written as
\begin{equation}\label{eq:spins}
 \sigma_i^\ell=2\one\left\{x_{i,\ell}
                   \le\frac{1+m^\ell(v_i)}2\right\}-1.
\end{equation}
The same $v_i$ is used across replicas at site $i$;
$m^\ell(v_i)$ is its magnetization coordinate in state $m^\ell$ obtained by averaging over the thermal coin flip $x_{i,\ell}$.
Write $\langle\,\cdot\,\rangle$ for the average over
replicas sampled from $G$, and $\E$ for the remaining
randomness, including the site variables and thermal
coins when they occur in an observable. Define the
overlap and multioverlaps by
\begin{equation}\label{eq:physicaloverlap}
 R_{\ell,\ell'}=\int m^\ell(v)m^{\ell'}(v)\,dv,
 \qquad
 R_{\ell_1,\ldots,\ell_k}^{(k)}
       =\int\prod_{j=1}^k m^{\ell_j}(v)\,dv.
\end{equation}
For distinct replica indices these are measurable
functions of the spin array. Indeed, conditionally
on the states, the site products are independent and
identically distributed, and the strong law gives
\[
 \frac1M\sum_{i=1}^M
          \sigma_i^{\ell_1}\cdots\sigma_i^{\ell_k}
 \longrightarrow R_{\ell_1,\ldots,\ell_k}^{(k)}
       \quad\text{almost surely}.
\]
The distinct-index restriction matters: the diagonal
$R_{\ell\ell}=\|m^\ell\|_2^2$ is the squared norm of
the magnetization profile, whereas $(\sigma_i^\ell)^2=1$.
We call $R_{\ell\ell'}$ the \emph{physical overlap}.

\subsection{Properties of asymptotic Gibbs measures}\label{sec:identities}

For the cavity lower bound, we also introduce the usual generic Gaussian and Poisson cavity regularizations, both with vanishing free energy per spin. These can be chosen jointly, together with a subsequence, so that the limiting spin distribution satisfies the Ghirlanda–Guerra identities and cavity equations below. This is a standard construction used, for example, in \cite{Spins, SKbook}. To avoid a lengthy repetition of these arguments, we record the details in a supplementary note \cite{GGcavInput}.

\paragraph{The Ghirlanda--Guerra identities for spins.}
We will assume that, for every $n\ge2$, every bounded measurable function $f$ of the spins of the first $n$ replicas and every
bounded measurable function $\psi$ of one overlap,
\begin{equation}\label{eq:GG}
 \E\langle f\psi(R_{1,n+1})\rangle
 =\frac1n\E\langle f\rangle\E\langle\psi(R_{1,2})\rangle
  +\frac1n\sum_{\ell=2}^n\E\langle f\psi(R_{1,\ell})\rangle.
\end{equation}
These identities \cite{GG} are obtained by adding a
small generic mixed $K$-spin Gaussian perturbation and
using free-energy concentration and Gaussian integration
by parts; see \cite[Section 3.2]{SKbook}. The test $f$
may depend on all spins of the first $n$ replicas,
with the same proof. This spin form was established
for $K$-sat in \cite{HEPS}; the argument for the other
clause laws considered here is the same under
\eqref{eq:clauseintegrability}. The additional Gaussian
regularization has vanishing free energy per spin;
it is separate from the REM perturbation, whose
strength $\epsilon$ is fixed at this stage.

\paragraph{The cavity equations.} At an added site $i$, the limiting cavity field includes both the model clauses and the perturbation. We use a fresh site coordinate in every incoming argument, with tuples $(i,d,a,j)$ serving as convenient site indices instead of a single index. Define
\begin{align}
 A_i(m,\varsigma)
 &=\sum_{a=1}^{N_{i,0}}\theta_{i,a}
       (m(v_{i,0,a,1}),\ldots,m(v_{i,0,a,K-1}),\varsigma)
       +g_i\frac{1+\varsigma}{2}\notag\\
 &\quad+\sum_{d\ge2}\sum_{a=1}^{N_{i,d}}
   \theta_{i,d,a}^{\epsilon}
       (m(v_{i,d,a,1}),\ldots,m(v_{i,d,a,d-1}),\varsigma),
                                                     \label{eq:cavityfield}
\end{align}
where $\varsigma=\pm1$ is the spin of the added site, $N_{i,0}\sim\operatorname{Pois}(\lambda K)$, and $N_{i,d}\sim\operatorname{Pois}(d)$. Counts, couplings and site coordinates are independent for different cavity sites and independent of $G$. Following \cite{rRSB}, let
\begin{equation}\label{eq:outputs}
 A_i(m)=\log\Av_\varsigma e^{A_i(m,\varsigma)},\quad
 \xi_i(m)=\frac{\Av_\varsigma\varsigma e^{A_i(m,\varsigma)}}{e^{A_i(m)}}.
\end{equation}
Thus $e^{A_i(m)}$ is the factor by which the state $m$ is reweighted when a cavity site $i$ is added, and $\xi_i(m)$ is the magnetization of the added site in that state. 
One can also express this in terms of the scalar effective field $[A_i(m,1)-A_i(m,-1)]/2$ as
$$
\xi_i(m) = \tanh\Bigl(\frac{A_i(m,1)-A_i(m,-1)}{2}\Bigr).
$$
We call $e^{A_i(m)}$ a \emph{multiplier} and
$\xi_i(m)$ a \emph{cavity output}.
For each input site $(i,d,a,j)$, its \emph{site array}
consists of the values $m(v_{i,d,a,j})$ as $m$ varies
over the states.

For $s$ cavity sites, let
\begin{equation}\label{eq:totalmultiplier}
 \Lambda(m)=\exp\sum_{i=1}^s A_i(m),\qquad \cZ=\langle\Lambda\rangle.
\end{equation}
For subsets $C_\ell\subseteq\{1,\ldots,L_0\}$, where $L_0\ge s$ and $1\le\ell\le n$, write $C_\ell^1=C_\ell\cap\{1,\ldots,s\}$ for the cavity sites and $C_\ell^2=C_\ell\setminus C_\ell^1$ for the noncavity sites. The cavity equations are
\begin{equation}\label{eq:SC}
 \E\prod_{\ell=1}^n\left\langle\prod_{i\in C_\ell}m(v_i)\right\rangle
 =\E\prod_{\ell=1}^n
 \frac{\left\langle \Lambda(m)\prod_{i\in C_\ell^1}\xi_i(m)
                        \prod_{i\in C_\ell^2}m(v_i)\right\rangle}{\cZ}.
\end{equation}
In words: a joint moment of spins at $L_0$ sites can be computed either directly, or by regarding the first $s$ sites as newly added sites, reweighting the states by $\Lambda$, and using the cavity magnetizations $\xi_i$ at those sites. The noncavity coordinates $v_i$ on the right are independent of the incoming coordinates in the cavity fields. The identity holds after averaging the clause disorder.

The cavity equations were obtained in \cite{Spins}
for standard diluted models; adding the REM clauses
changes only the list of clause types in that argument. The perturbation, or stabilization, argument in \cite{Spins} that yields these identities can be understood by working with systems of $N+\operatorname{Pois}(c_N)$ sites instead of $N$,
where, for example, $c_N=\log N$. Adding a fixed number
$s$ of sites changes the Poisson law by a total-variation
error tending to zero.  This regularization has vanishing free energy per spin, just like the generic Gaussian regularization for the Ghirlanda--Guerra identities, so it does not affect the free energy. We refer to \cite{GGcavInput} for details.

\subsection{The cavity functional and lower-bound theorem}\label{sec:cavitybound}

Next, we recall the Aizenman--Sims--Starr cavity functional derived in
\cite{Spins}. Recall the cavity field $A(m,\varsigma)$ in \eqref{eq:cavityfield}, and define
\begin{align}
 B(m)&=\sum_{a=1}^{M_0}\theta_a(m(v_{0,a,1}),\ldots,m(v_{0,a,K}))\notag\\
 &\quad+\sum_{d\ge2}\sum_{a=1}^{M_d}
          \theta_{d,a}^{\epsilon}(m(v_{d,a,1}),\ldots,m(v_{d,a,d})),
                                                     \label{eq:correction}
\end{align}
where $M_0\sim\operatorname{Pois}(\lambda(K-1))$, $M_d\sim\operatorname{Pois}(d-1)$, and the clauses use independent site coordinates $(d,a,j).$ This is the field in the correction term of the cavity functional
\begin{equation}\label{eq:PG}
 \cP_\epsilon(G)
 =\log2+\E\log\langle\Av_\varsigma e^{A(m,\varsigma)}\rangle
         -\E\log\langle e^{B(m)}\rangle.
\end{equation}
The cavity method in \cite{Spins} gives the following starting point
for the lower bound. Let $\cG_\epsilon$ be the class of random spin laws
satisfying \eqref{eq:GG} and \eqref{eq:SC} for the
fields \eqref{eq:cavityfield}. The structural theorem
below will apply to every law in this class.

\begin{theorem}[Cavity lower bound]\label{thm:cavitybound}
Under \eqref{eq:clauseintegrability}, for every fixed
$\epsilon>0$ there is $G_\epsilon\in\cG_\epsilon$,
obtained as a subsequential spin limit of regularized
finite systems, such that
\begin{equation}\label{eq:cavityinput}
\liminf_{N\to\infty}F_N^\epsilon\geq  \cP_\epsilon(G_\epsilon).
\end{equation}
In particular,
\begin{equation}\label{eq:ASSinput}
 \liminf_{N\to\infty}F_N^\epsilon
       \ge\inf_{G\in\cG_\epsilon}\cP_\epsilon(G).
\end{equation}
\end{theorem}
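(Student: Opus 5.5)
The plan follows the Aizenman--Sims--Starr scheme as implemented in \cite{Spins}, with the REM clauses added to the list of clause types. First, we replace the system by its regularized version. That version has $N+\operatorname{Pois}(c_N)$ sites, a generic Gaussian mixed $K$-spin perturbation and the Poisson cavity regularization, and its free energy per spin differs from $F_N^\epsilon$ by $o(1)$. Write $Z_N$ for its partition function. The lower bound on $\liminf F_N^\epsilon$ then follows from the Cesàro-type inequality
\[
\liminf_{N\to\infty}\frac1N\E\log Z_N\ \ge\ \liminf_{N\to\infty}\bigl(\E\log Z_{N+1}-\E\log Z_N\bigr).
\]
So it suffices to choose a subsequence along which the increment converges to $\cP_\epsilon(G_\epsilon)$ and the spin law converges to some $G_\epsilon$. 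Concretely, we pick $N_k$ realizing the $\liminf$ of the increments, and then pass to a further subsequence on which the finite-dimensional spin laws converge. These laws are tight because spins are $\pm1$. By the construction recorded in \cite{GGcavInput}, we may also arrange that the limit satisfies \eqref{eq:GG} and \eqref{eq:SC}, so $G_\epsilon\in\cG_\epsilon$.

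Second, we compute the increment by the standard cavity decomposition. Use Poisson thinning and the invariance of the clause law under permutations of inputs. Then $H_{N+1}$ and $H_N$ can both be written, up to total-variation errors vanishing as $N\to\infty$, in terms of a common system $H_N'$ with slightly fewer clauses. For $H_{N+1}$, add clauses that touch the new site once: $\operatorname{Pois}(\lambda K)$ model clauses and $\operatorname{Pois}(d)$ REM clauses of arity $d$, plus the field $g_{N+1}$. For $H_N$, add $\operatorname{Pois}(\lambda(K-1))$ model clauses and $\operatorname{Pois}(d-1)$ REM clauses of arity $d$ on old sites. The arities match because the expected number of arity-$d$ clauses touching a given site is $d\cdot N/N=d$, while the removed mass is $(d-1)$. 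The increment becomes
\[
\E\log\bigl\langle \textstyle\sum_{\varsigma}e^{A(\sigma,\varsigma)}\bigr\rangle' \;-\; \E\log\langle e^{B(\sigma)}\rangle'
\]
under the Gibbs measure of $H_N'$. Clauses with two or more endpoints in the cavity have probability $O(1/N)$ and are negligible. Averaging over the independent spin coins at the uniformly chosen input sites produces exactly the multiaffine extension \eqref{eq:extension} evaluated at the magnetizations. This yields functionals of the spin law of the form $\log2+\E\log\langle\Av_\varsigma e^{A}\rangle-\E\log\langle e^B\rangle$.

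Third, we pass to the limit, where two points need care. The first is that the Gibbs measure of $H_N'$ must have the same limiting spin law as that of $H_N$. This holds because they differ by $O(1)$ clauses with Poisson laws close in total variation, and this is exactly the role of the Poisson-number-of-sites stabilization. The second is continuity of the functional: the expressions involve infinitely many clause types and unbounded clause values. We first truncate to finitely many arities $d\le D$ and to clause counts at most $n$. The tails are controlled by \eqref{eq:tablemax} and \eqref{eq:clauseintegrability}: since $|\log\langle e^{X}\rangle|\le\max|X|$, the error is at most $\E J_0\cdot\E[N_0\one\{N_0>n\}]+\sum_{d>D}d\,\E J_d$, which is uniform in $N$. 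With everything truncated, the quantity inside the logarithm is a polynomial in finitely many spins of finitely many replicas, averaged against bounded weights. It is bounded below by $e^{-\sum J}$. The logarithm of such a quantity is continuous under convergence of finite-dimensional spin laws, which we verify by expanding $e^{\theta}$ through \eqref{eq:extension} and approximating $\log$ by polynomials on the relevant compact interval. This gives convergence of the increments to $\cP_\epsilon(G_\epsilon)$, hence \eqref{eq:cavityinput}. Then \eqref{eq:ASSinput} is immediate since $G_\epsilon\in\cG_\epsilon$.

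The main obstacle is the second step. There we must carry out the decomposition of $H_{N+1}$ and $H_N$ into a common system plus cavity terms with errors that are uniform despite the infinite family of REM arities and unbounded Gaussian entries. We also need the three regularizations to be compatible with one subsequence. The first issue I would handle by summing the per-arity total-variation estimates weighted by $\E J_d$, using the $2^{-2d}$ decay. The second I would defer to the diagonal construction in \cite{GGcavInput}.
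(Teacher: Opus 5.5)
Your proposal is correct and takes essentially the paper's route: the paper does not reprove this theorem but takes it from the Aizenman--Sims--Starr cavity computation of \cite{Spins} with the REM clauses added to the list of clause types, and defers the regularizations and their compatibility with a single subsequence to \cite{GGcavInput}, exactly as you do. One point to phrase more carefully: the splitting of $H_{N+1}$ and $H_N$ around a common system should be exact in law by Poisson thinning, with the leftover discrepancies charged as expected number of clauses times $\E J_d$, not as a total-variation error on the whole Hamiltonian (which would get multiplied by $\log Z=O(N)$); your arity-weighted bookkeeping effectively does this.
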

This is the REM counterpart of \cite[Theorem 1]{rRSB}.
The required modifications of the standard arguments are carried out in \cite{GGcavInput}. 

The remaining task is to prove that the laws in
$\cG_\epsilon$ have the M\'ezard--Parisi structure in
the approximate sense needed by the functional.
This is Theorem \ref{thm:main} below. The first key step is to use the hierarchical exchangeability of the asymptotic Gibbs measures satisfying the Ghirlanda--Guerra identities proved in \cite{HEPS}, together with  an analogue of the Aldous-Hoover representation for such arrays proved in \cite{AP}.

\section{M\'ezard--Parisi reproducibility}\label{sec:reproducibility}

Throughout this section, $G\in\cG_\epsilon$ is a random spin law satisfying the Ghirlanda--Guerra identities and the cavity equations.

\subsection{Overlap geometry, cuts and cluster barycenters}\label{sec:cutgeometry}
Let $\zeta$ be the law of $R_{1,2}$ under
$\E G^{\otimes2}$ and put $q_*=\sup\supp\zeta$.
The standard geometric consequences of the Ghirlanda--Guerra identities \cite{Ultra,SKbook,Talagrand} are
\begin{equation}\label{eq:GGconsequences}
 G(\|m\|_2^2=q_*)=1,\qquad
 R_{1,2}\ge0,\qquad
 R_{2,3}\ge\min(R_{1,2},R_{1,3})
       \quad\text{almost surely}.
\end{equation}
Thus all states lie on one sphere, the overlaps are
nonnegative and their geometry is ultrametric. In
particular, the diagonal of every physical overlap
matrix below is $q_*$, rather than the spin
self-overlap $1$. If $\zeta$ is a point mass, then
all states have the same norm and scalar product,
so $G$ is supported on one function and the structural
conclusion is immediate. We henceforth consider the
case in which $\zeta$ is not a point mass.

Choose thresholds
\[
 0=t_0<t_1<\cdots<t_r=q_{\rm top}<q_* .
\]
Let $I_j=[t_j,t_{j+1})$ for $0\le j<r$, and $I_r=[t_r,q_*]$. By ultrametricity, for each threshold $t_j$ the relation $m\cdot m'\ge t_j$ is an equivalence relation on $\supp G$, and the equivalence classes for $t_0,t_1,\ldots,t_r$ are nested. They form the \emph{cluster tree} of the cut. The two quantities measuring fineness on the overlap support are
\begin{equation}\label{eq:cuterrors}
 \delta=\max_{j<r}\diam(I_j\cap\supp\zeta),
 \qquad \eta=q_*-\inf(I_r\cap\supp\zeta).
\end{equation}
An empty intersection contributes zero to the maximum. We remove the levels of zero overlap mass and relabel, so that from now on every bin carries positive mass. Such cuts can be chosen with both $\delta$ and $\eta$ arbitrarily small.

Using the tree notation of Section \ref{sec:MPformula}, we enumerate the clusters as follows. The children of every cluster are enumerated in decreasing order of their masses. By \eqref{eq:GG} every cluster has infinitely many children, with distinct positive masses almost surely \cite{HEPS,SKbook}, so this enumeration identifies the cluster tree with $\cA_r$. We denote the clusters by $(C_\beta)_{\beta\in \cA_r}$ and their Gibbs weights by $ V_\beta=G(C_\beta).$ For a terminal cluster $C_\alpha$, $\alpha\in\N^r$, define
\begin{equation}\label{eq:barycenter}
 V_\alpha=G(C_\alpha),\qquad G_\alpha=G(\,\cdot\mid C_\alpha),\qquad
 b_\alpha(v)=\int m(v)\,G_\alpha(dm).
\end{equation}
The function $b_\alpha$ is the barycenter of the cluster; it is the magnetization profile of the approximate state $\alpha$. If $\alpha\wedge\beta=j<r$, the overlaps of states in $C_\alpha$ and $C_\beta$ lie in the level-$j$ bin, hence $b_\alpha\cdot b_\beta$ lies in its convex hull. For each terminal cluster,
\begin{align}
 q_*-\eta=\inf(I_r\cap\supp\zeta)\le\|b_\alpha\|_2^2&\le q_* ,\notag\\
 \int\|m-b_\alpha\|_2^2\,G_\alpha(dm)
 &=q_*-\|b_\alpha\|_2^2\le\eta,\label{eq:baryerror}\\
 \E_v\int|m(v)-b_\alpha(v)|\,G_\alpha(dm)&\le\sqrt\eta.
                                                        \label{eq:baryL1}
\end{align}
For the first line: almost surely $G\otimes G(R_{1,2}\notin\supp\zeta)=0$, since its expectation is $\zeta\bigl((\supp\zeta)^c\bigr)=0$, and two states of one terminal cluster have overlap at least $t_r$; hence $G_\alpha\otimes G_\alpha$-almost every pair has overlap in $I_r\cap\supp\zeta$, and $\|b_\alpha\|_2^2=\iint m\cdot m'\,G_\alpha(dm)\,G_\alpha(dm')\ge\inf(I_r\cap\supp\zeta)$; the upper bound is the Cauchy--Schwarz inequality. The second line uses $\|m\|_2^2=q_*$ for all states and the fact that $b_\alpha$ is the mean of $G_\alpha$; the third is Cauchy--Schwarz. These bounds hold for every cluster, not only on average over the weights. 

\subsection{Cascade weights and the hierarchical representation}\label{sec:cutrepresentation}

We now recall another implication of the Ghirlanda--Guerra identities proved in  \cite{HEPS,AP}. Define cut parameters and bin masses by
\begin{equation}\label{eq:zetapar}
 \zeta_j=\zeta(I_0\cup\cdots\cup I_j),\qquad
 0=\zeta_{-1}<\zeta_0<\cdots<\zeta_{r-1}<\zeta_r=1,
 \qquad w_j=\zeta_j-\zeta_{j-1}.
\end{equation}
The number $w_j$ is the probability that two replicas, sampled independently from $G$, have their last common ancestor at level $j$, that is, an overlap in $I_j$; $w_r$ is the probability that they fall in the same terminal cluster.

The GG identities imply \cite{HEPS,AP} that the cluster weights $(V_\beta)_{\beta\in \cA_r}$ have the Ruelle probability cascade (RPC) law with parameters \eqref{eq:zetapar}, and the weights are independent of the barycenter site array $(b_\alpha(v_i))_{\alpha\in\N^r,i\ge1}$. Moreover, this array has the following representation:
\begin{equation}\label{eq:he}
 b_\alpha^i=h\bigl((U_\beta)_{\beta\in p(\alpha)},
                         (Z_\beta^i)_{\beta\in p(\alpha)}\bigr),
 \qquad |h|\le1,
\end{equation}
where $h:[0,1]^{2(r+1)}\to[-1,1]$ is a deterministic measurable function and all the displayed uniform variables are mutually independent and independent of the weights. The variables $U_\beta$ are shared by all sites; different sites use independent arrays $(Z_\beta^i)_\beta$. Both families include the root, and $U_*$ is retained as a global mixture parameter. The representation retains the physical scalar products and multioverlaps: for any finite tuple of leaves $\boldsymbol\alpha=(\alpha_1,\ldots,\alpha_k)$, with $\E_Z$ the average over all the site variables of one site,
\begin{equation}\label{eq:heoverlaps}
 \begin{aligned}
 &b_\alpha\cdot b_\beta
   =\int b_\alpha(v)b_\beta(v)\,dv
     =\E_Z b_\alpha^1b_\beta^1,\\
 &M_{\boldsymbol\alpha}
   :=\int\prod_{\ell=1}^k b_{\alpha_\ell}(v)\,dv
     =\E_Z\prod_{\ell=1}^k b_{\alpha_\ell}^1.
 \end{aligned}
\end{equation}
simultaneously for all tuples, on one event of probability one. Here the left sides, defined in the original realization, are carried to the representation as almost sure limits of site averages; what is used later is the resulting equality of the joint laws of the weights, the represented array and these quantities.

The following is the barycenter formulation of
Theorem 1 and Corollary 1 of \cite{HEPS}, as explained
in the discussion following that corollary, which
also gives the multioverlap identities
\eqref{eq:heoverlaps}.

\begin{lemma}[Hierarchical representation at a cut]\label{lem:A3}
Under \eqref{eq:GG}, the statements \eqref{eq:he} and \eqref{eq:heoverlaps} hold at every cut as above.
\end{lemma}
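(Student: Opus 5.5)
The plan is to deduce Lemma \ref{lem:A3} from the hierarchical exchangeability theorem of \cite{HEPS} and the Aldous--Hoover type representation of \cite{AP}, applied to the array of cluster barycenters. Fix a cut and build the array
\[
 \bigl(b_\alpha(v_i)\bigr)_{\alpha\in\N^r,\,i\ge1}
\]
from i.i.d.\ uniform site coordinates $v_i$. Enumerate the clusters at each level by decreasing mass, as in Section \ref{sec:cutgeometry}. This is legitimate because \eqref{eq:GG} gives infinitely many children with distinct masses almost surely.

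\textbf{Step 1: weights and hierarchical exchangeability.} By Theorem 1 of \cite{HEPS}, the identities \eqref{eq:GG} (via the Talagrand--Panchenko argument in \cite{SKbook}) imply three things:
\begin{enumerate}
\item the weights $(V_\beta)_{\beta\in\cA_r}$ form a Ruelle probability cascade with parameters \eqref{eq:zetapar};
\item the weights are independent of the barycenter array;
\item the array is invariant in law under permutations of $\cA_r$ that preserve the tree structure, and separately under permutations of the sites $i$.
\end{enumerate}
The key input is that, under GG, the reordering by weights is independent of the within-cluster data. One proves this by checking, via \eqref{eq:GG}, that joint moments of spin functionals of replicas sampled from $G$ factor through the overlap tree. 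The barycenters are conditional means over terminal clusters, so they are obtained as limits of averages of replica magnetizations lying in the same terminal cluster. Hence statements proved for spin arrays of replicas transfer to the barycenter array. I would verify this transfer carefully using \eqref{eq:baryerror}.

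\textbf{Step 2: representation.} Apply the main theorem of \cite{AP}. A hierarchically exchangeable array indexed by $\N^r\times\N$, with separate exchangeability in the site index, admits a representation
\[
 h\bigl((U_\beta)_{\beta\in p(\alpha)},(Z^i_\beta)_{\beta\in p(\alpha)}\bigr),
\]
with uniform variables indexed by the path vertices. Here the $U$ variables are shared by all sites, the $Z$ variables are site-specific, and the root variables carry the global mixture. Since $|b_\alpha|\le1$, we may truncate $h$ to $[-1,1]$. Independence from the weights is inherited from Step 1 by realizing the representation on a product space.

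\textbf{Step 3: overlaps and multioverlaps.} For the identities \eqref{eq:heoverlaps}, use the strong law over sites. Conditionally on $G$, the site products $\prod_\ell b_{\alpha_\ell}(v_i)$ are i.i.d.\ with mean $M_{\boldsymbol\alpha}$. In the representation, conditionally on the $U$ variables, the $i$-indexed products are i.i.d.\ with mean $\E_Z\prod_\ell b^1_{\alpha_\ell}$. Equality of the joint laws of the arrays then transfers these almost-sure limits. Since there are only countably many tuples, the identities hold simultaneously on one event.

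\textbf{Main obstacle.} The delicate step is Step 1. One must ensure that barycenters of GG-clusters, rather than spins of sampled replicas, inherit hierarchical exchangeability and independence from the weights at an arbitrary cut whose thresholds may sit at atoms or gaps of $\zeta$. I would first try to approximate $b_\alpha$ by averages of $n$ replicas conditioned to lie in $C_\alpha$. I would then apply the spin-level statement of \cite{HEPS}, Corollary 1, and let $n\to\infty$, with the error controlled uniformly by \eqref{eq:baryerror}.
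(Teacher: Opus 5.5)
Your proposal is correct and follows essentially the same route as the paper. The paper also obtains the lemma by citation, as the barycenter formulation of Theorem 1 and Corollary 1 of \cite{HEPS}; that corollary already incorporates the representation of \cite{AP}, and the discussion after it covers arbitrary cuts and the multioverlap identities, which it carries over as almost-sure limits of site averages exactly as in your Step 3, so your replica-averaging argument for barycenters re-derives a step the paper takes directly from that reference.
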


The representation is cut-dependent: neither $h$ nor the particular root variable has to be coupled across different cuts, and we never compare representations of different cuts, only averaged quantities.

The representation above contains
state variables shared across sites. We now state the
main structural result: after averaging over the
site variables, the multioverlaps become approximately independent
of the nonroot state variables as the cut becomes fine. This will be the consequence of the cavity equations \eqref{eq:SC} and, in particular, of the perturbation term in the cavity field.

\subsection{Replica shapes and the reproducibility theorem}\label{sec:defect}

Fix $\epsilon>0$ and $G\in\cG_\epsilon$, and choose a cut and its representation as in Section \ref{sec:cutgeometry}. Sample $k$ terminal labels $\boldsymbol\alpha=(\alpha_1,\ldots,\alpha_k)$ from the weights, independently given the weights. Their \emph{shape} $T=T(\boldsymbol\alpha)$ is the $k\times k$ matrix of last-common-ancestor depths $\alpha_\ell\wedge\alpha_{\ell'}$. Terminal labels may repeat, in which case the corresponding entry is $r$. Choose strictly increasing representatives
\begin{equation}\label{eq:qrepresentatives}
 q_j\in\overline{\conv(I_j\cap\supp\zeta)}\quad(j<r),
 \qquad q_r=q_*.
\end{equation}
The \emph{quantized overlap matrix} of the shape is
\begin{equation}\label{eq:qmatrix}
 q(T)_{\ell,\ell'}=
 \begin{cases}q_{\alpha_\ell\wedge\alpha_{\ell'}},&\ell\ne\ell',\\
 q_*,&\ell=\ell'.\end{cases}
\end{equation}
Its entries differ from the overlaps of the sampled states, and from the scalar products of their cluster barycenters, by at most $\max(\delta,\eta)$; in the diagonal barycenter comparison, by at most $\eta$. All equalities and inequalities between the levels of the shape can be read from $q(T)$.

The RPC probability of a labeled shape is
\begin{equation}\label{eq:muglobal}
 \mu_k^0(T)=\E\sum_{\boldsymbol\alpha:T(\boldsymbol\alpha)=T}
                                 V_{\alpha_1}\cdots V_{\alpha_k}.
\end{equation}
Shapes are labeled throughout, and $\sum_T\mu_k^0(T)=1$. The law $\mu_k^0$ depends only on the cascade parameters \eqref{eq:zetapar}; its explicit form is given in Appendix \ref{app:shapes}.

Let $\E_U$ denote the average over the nonroot state variables, with $U_*$ retained. The \emph{defect} is
\begin{equation}\label{eq:Dk}
 D_k=\E\sum_{\boldsymbol\alpha}V_{\alpha_1}\cdots V_{\alpha_k}
       \bigl(M_{\boldsymbol\alpha}
                    -\E_U[M_{\boldsymbol\alpha}\mid U_*]\bigr)^2 .
\end{equation}
By the independence of the weights from the represented array and by \eqref{eq:he}, the conditional mean $\E_U[M_{\boldsymbol\alpha}\mid U_*]$ depends only on $U_*$ and on the shape of $\boldsymbol\alpha$. Thus $D_k$ is the average, over shapes with their RPC probabilities, of the conditional variance of the multioverlap given $U_*$ and the shape. If $h$ has the M\'ezard--Parisi form, $D_k=0$ exactly. If $D_k$ is small, then multioverlaps depend approximately only on \(U_*\) and the shape. This is enough to prove the M\'ezard--Parisi formula.

\begin{theorem}[Approximate reproducibility]\label{thm:main}
For every fixed $\epsilon>0$ and $G\in\cG_\epsilon$,
along every sequence of finite cuts with
$\delta,\eta\to0$ and with no restriction on their
numbers of levels,
\[
 D_k\longrightarrow0\qquad\text{for each fixed }k.
\]
The conclusion holds for every choice of the
hierarchical representation at each cut. No common
representation across different cuts is required.
\end{theorem}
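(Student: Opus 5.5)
We prove Theorem \ref{thm:main} by induction on the number $k$ of replicas, using the cavity equations \eqref{eq:SC} with REM clauses as test functions. The plan is to derive, at each cut, an identity that expresses the joint law of multioverlaps of $k$ cluster barycenters, reweighted by cavity multipliers, in two ways. The direct side is \eqref{eq:SC}. The cavity side, through \eqref{eq:he} and the RPC invariance, becomes the same cascade reweighted by $e^{A_i(b_\alpha)}$. Because the REM tables $\theta^\epsilon_{d,a}$ have $2^d$ independent Gaussian values with full support, conditioning on a positive-probability neighbourhood of any prescribed table lets the multiplier $\Av_\varsigma e^{A_i(m,\varsigma)}$ approximate an arbitrary positive multiaffine function of the input magnetizations $m(v_{i,d,a,j})$. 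Two errors arise from replacing states $m$ by barycenters $b_\alpha$ inside these multiaffine factors. The error from the within-cluster spread is controlled by \eqref{eq:baryL1}, of order $\sqrt\eta$ times constants from \eqref{eq:clauseintegrability}. The error from the binning is controlled by $\delta$. Both constants depend on $k$ and the number of clauses, but not on $r$.

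The key steps, in order, are as follows.
\begin{enumerate}
\item Rewrite $D_k$ as $\sum_T\mu_k^0(T)\,\E\operatorname{Var}(M_{\boldsymbol\alpha}\mid U_*,T)$. By exchangeability under the cascade (hierarchical exchangeability), this equals the expected squared difference of $M_{\boldsymbol\alpha}$ and $M_{\boldsymbol\alpha'}$ for two tuples of the same shape that branch off independently below the root. So it suffices to show that multioverlaps of two \emph{independent copies of a shape} inside one sample are asymptotically equal in $L^2$.
\item Apply \eqref{eq:SC} with one cavity site and $n$ replicas, whose test monomials use the cavity output $\xi_1$ together with noncavity sites. Choose the REM clause at that site to have $d-1=k$ inputs, taken from replica coordinates, with a prescribed table. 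The cavity output $\xi_1(m)$ is then a tanh of an affine-in-each-coordinate expression. Averaged over the fresh incoming coordinates $v$, it produces, through multiaffinity, exactly the multioverlaps $M$ of the sampled clusters. The left side of \eqref{eq:SC}, however, depends on the states only through the spin law, which is invariant under reshuffling of subtrees by the RPC property.
\item Compare the resulting reweighted cascade (weights times $e^{A(b_\alpha)}$, whose law by the Bolthausen--Sznitman/RPC invariance is again an RPC with tilted atoms) with the untilted one. Equality for all prescribed positive multiaffine tables forces the conditional law of $(M_{\boldsymbol\alpha})$ given the tree path to coincide with its law given only $U_*$, up to $O_k(\sqrt\eta+\delta)$.
\item Induct on $k$. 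For $k=1$, the multioverlap is $\E_Z b^1_\alpha$. At stage $k$, the lower-order defects $D_{k'}$, $k'<k$, already vanish asymptotically, so factors of the cavity multiplier involving fewer replicas may be replaced by their $U_*$-conditional means.
\item Finally, the quadratic statistic $D_k$ is obtained by taking the multiaffine test to be a small perturbation $1+t\phi$, expanding to second order in $t$, and using that the first-order terms cancel by \eqref{eq:SC}.
\end{enumerate}

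The main obstacle is step 3 with uniformity in $r$. The exact finite-RSB identities of \cite{rRSB} must be replaced by estimates whose errors do not accumulate over the $r$ levels. The approach to try first is to avoid level-by-level recursion altogether, by averaging the Ruelle tilting in a single step. The cascade expectation $\E\log\sum_\alpha V_\alpha X_\alpha$ with tilts $X_\alpha$ is Lipschitz in the sup-norm of $\log X_\alpha$, independently of $r$. Errors then enter only through $\sup_\alpha$ of barycenter approximations, which are $O(\sqrt\eta+\delta)$ by \eqref{eq:baryerror} uniformly over clusters. If this works, a compactness argument over the finitely many parameters for fixed $k$ completes the proof.
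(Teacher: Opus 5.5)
Your proposal has genuine gaps. The most basic one is in step 3: you do not say where the constraint comes from, and the way you propose to obtain it cannot work. The cavity equations \eqref{eq:SC} hold only after averaging the cavity clause disorder. You therefore cannot get an ``equality for all prescribed tables'' by conditioning on a neighbourhood of a REM table. Only a \emph{nonnegative} quantity whose disorder average is small can be localized to a positive-probability event.

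You also treat $\delta$ purely as an approximation error, when it is in fact the source of the information. Overlaps of states separating at depth $p$ lie in one bin of width $\delta$, so the one-replica future magnetization has the fourth power of its covariance norm at most $\delta^2/4$ (Lemma~\ref{lem:gram}). The four-replica identity \eqref{eq:fourcopy}, combined with the shape-restricted transfer of Proposition~\ref{prop:transfer}, turns this into smallness of $\E\int C_p^2\rho_{0:p}$ in \eqref{eq:covsmall}. This is the squared conditional covariance of two cavity outputs under the changed density. That nonnegative quantity, not a second-order expansion in $t$ as in your step 5, is what can be restricted to the REM event. Differentiating it in the two unary fields (Lemma~\ref{lem:hessian}) then yields factorization of the path densities and the one-site control of Proposition~\ref{prop:response}.

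Step 2 is also wrong as stated. A cavity output is a $\tanh$ of a difference of logarithms of multiaffine Boltzmann sums, so averaging it over the incoming sites does not produce multioverlaps. Multiaffinity survives only in the multipliers, and $k$-cluster moments arise through the normalization that couples the replicas. The paper linearizes this by choosing the multiplier $1+L$ with output $L/(1+L)$. It then expands $1/(1+\langle L\rangle_v)$ (Lemma~\ref{lem:spinal}) and extracts a mixed coefficient by finite differences.

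The induction in step 4 uses too weak a hypothesis. $D_{k'}\to0$ only controls the fully site-averaged multioverlap given $U_*$. The children of a shape that first splits at depth $a>0$ share the site prefix $Z_{\le a}$. To replace a child factor by its mean, you need its dependence on the child state variables with $Z_{\le a}$ left unaveraged, and at every depth $a$. For example, a term $g(U_{T_i})\phi(Z_{\le a})$ with $\E\phi=0$ is invisible to $D_{k_i}$ but survives in the product with a sibling carrying a similar term. This is why the paper inducts on $\kappa_k$ in \eqref{eq:kappa}, built from covariance norms that keep the site prefix. For these norms, branch replacement costs nothing per unit of stem length (Lemma~\ref{lem:branch}).

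Your treatment of uniformity in $r$ targets the wrong difficulty. The density recursion is already $r$-uniform by summation by parts \eqref{eq:abel}. Lipschitz continuity of $\E\log\sum_\alpha V_\alpha X_\alpha$ controls free energies, not structure, and \eqref{eq:baryerror} is an averaged $L^2$ bound, not a sup bound. The real obstruction is that the cavity tests only control averages over shapes weighted by continuous functions of $q(T)$, while the number of shapes is unbounded as $r$ grows. So there is no ``compactness over finitely many parameters'': cancellations on ever finer sets of overlap matrices must be ruled out. The paper does this by passing to the fixed law $\mu_k$ of physical overlaps and proving the cell approximation of Theorem~\ref{thm:compact}, using product domination and the submartingale depth-variation estimate.
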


The estimates proving this theorem do not depend on
the number of levels. This is what permits general
overlap distributions in place of the finite-RSB
condition of \cite{rRSB}. The next subsection gives
its consequence for the spin law and the variational
lower bound.

\subsection{From reproducibility to the MP lower bound}\label{sec:whydefects}
The nonroot state variables $U_\beta$, which are shared across sites, are not allowed in the M\'ezard--Parisi representation. The natural approximation is to give each site its own copy:
\begin{equation}\label{eq:resampling}
 \widehat b_\alpha^i
 =h\bigl(U_*,(U_\beta^{(i)})_{\beta\in p(\alpha)\setminus\{*\}},
                    (Z_\beta^i)_{\beta\in p(\alpha)}\bigr).
\end{equation}
Here $U_*$ and the weights are retained, and all copied variables are independent over sites and vertices. Each pair $(U_\beta^{(i)},Z_\beta^i)$ can be encoded in one uniform variable $\widehat Z_\beta^i$. Conditionally on $U_*$, the resampled field has the M\'ezard--Parisi form with an additional root site variable:
\begin{equation}\label{eq:MProot}
 \widehat b_\alpha^i
   =\widehat h_{U_*}((\widehat Z_\beta^i)_{\beta\in p(\alpha)}).
\end{equation}

\paragraph{The root site coordinate.}\label{sec:mainstatements}
Accordingly, we allow a measurable function
$h_0:[0,1]^{r+1}\to[-1,1]$ and fields
\begin{equation}\label{eq:MPtrial}
 m_\alpha^I=h_0((Z_\beta^I)_{\beta\in p(\alpha)}).
\end{equation}
All site arrays are independent, including their root
coordinates. Let $\cT_{\rm root}$ denote this enlarged
trial class and $\cT_0$ its subclass that ignores the
root, which is the class used in Section
\ref{sec:MPformula}. Substituting \eqref{eq:MPtrial}
into \eqref{eq:cavityfield} and \eqref{eq:correction}
gives the perturbed trial functional
\begin{equation}\label{eq:Ptrial}
 \cP_\epsilon(r,\boldsymbol\zeta,h_0)
 =\log2+\E\log\sum_\alpha V_\alpha\Av_\varsigma
                                       e^{A_\alpha(\varsigma)}
        -\E\log\sum_\alpha V_\alpha e^{B_\alpha}.
\end{equation}
For these enlarged trials we also write $\cP_0$ when
the perturbation is omitted, extending
\eqref{eq:MPfunctional0}. The two trial classes have
the same infimum, with or without the perturbation:
the root coordinate can be replaced by an extra first
level whose parameter tends to zero. This is proved
in \eqref{eq:sameinf} in Section \ref{sec:bridge}.

\begin{proposition}[Approximation of finite spin distributions]\label{prop:MPapprox}
Let $C_1,\ldots,C_s$ be nonempty subsets of $\{1,\ldots,n\}$. Then
\begin{align}
 &\left|\E\left\langle\prod_{i=1}^s\prod_{\ell\in C_i}\sigma_i^\ell\right\rangle
 -\E\sum_{\alpha_1,\ldots,\alpha_n}
       \left(\prod_{\ell=1}^nV_{\alpha_\ell}\right)
       \prod_{i=1}^s\prod_{\ell\in C_i}\widehat b_{\alpha_\ell}^i\right|\notag\\
 &\hspace{15mm}\le
       \left(\sum_{i=1}^s|C_i|\right)\sqrt\eta
                      +\sum_{i=1}^s\sqrt{D_{|C_i|}}
       \le ns\sqrt\eta+s\max_{1\le j\le n}\sqrt{D_j}.
                                                        \label{eq:resamplingerror}
\end{align}
The second expectation includes the weights, the resampled site arrays, and the common global variable $U_*$. The spin distributions generated by \eqref{eq:MProot} are mixtures over $U_*$ of the trials \eqref{eq:MPtrial}.
\end{proposition}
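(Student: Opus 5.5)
The plan is to pass from the spin moment to barycenters in two steps, and then from barycenters to the resampled array $\widehat b$ one site at a time.

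\emph{Step 1: spins to states.} Using \eqref{eq:spins} and averaging over the thermal coins $x_{i,\ell}$, which are independent across replicas and sites, the left-hand moment becomes
\[
\E\Bigl\langle\prod_{i=1}^s\prod_{\ell\in C_i}m^\ell(v_i)\Bigr\rangle .
\]
Decompose each replica by its terminal cluster: $G^{\otimes n}=\sum_{\alpha_1,\ldots,\alpha_n}\prod_\ell V_{\alpha_\ell}G_{\alpha_1}\otimes\cdots\otimes G_{\alpha_n}$. In each product, replace $m^\ell(v_i)$ by $b_{\alpha_\ell}(v_i)$ one factor at a time, for a total of $\sum_i|C_i|$ replacements. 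All other factors are bounded by $1$. Each replacement therefore costs at most $\E_v\int|m(v)-b_\alpha(v)|\,G_\alpha(dm)\le\sqrt\eta$ by \eqref{eq:baryL1}. This uses the bound cluster by cluster and the fact that the $v_i$ are uniform and independent of $G$. This yields the term $(\sum_i|C_i|)\sqrt\eta$. Note that when two replicas share a cluster they still use independent states, so after integrating over $G_\alpha$ the product of the $m$'s with the remaining factors fixed is exactly linear in each factor. This is what makes the one-at-a-time swap legitimate.

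\emph{Step 2: move to the representation.} Since $b$ is evaluated at the independent sites $v_i$, Lemma \ref{lem:A3} lets us replace $(V,b_\alpha(v_i))$ by the weights and the array \eqref{eq:he}. This leaves the law of the joint quantity unchanged. Then average over the site variables $Z^i$, which are independent across sites, given the weights and all $U$. Site $i$ contributes exactly $M_{\boldsymbol\alpha^{(i)}}$, the multioverlap of the tuple $\boldsymbol\alpha^{(i)}=(\alpha_\ell)_{\ell\in C_i}$. So the barycenter moment equals
\[
\E\sum_{\boldsymbol\alpha}\prod_{\ell}V_{\alpha_\ell}\prod_{i=1}^sM_{\boldsymbol\alpha^{(i)}} .
\]
For $\widehat b$, the same computation conditional on $U_*$ and the weights gives
\[
\prod_i\E_U[M_{\boldsymbol\alpha^{(i)}}\mid U_*] ,
\]
because site $i$ uses its own independent copies $U^{(i)}_\beta$. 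Moreover, averaging $M$ over the nonroot $U$ equals averaging over those copies, which is exactly the conditional mean appearing in \eqref{eq:Dk}.

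\emph{Step 3: telescoping.} Swap the factors one site at a time:
\[
\prod_iM_i-\prod_i\bar M_i=\sum_{i}\Bigl(\prod_{j<i}\bar M_j\Bigr)(M_i-\bar M_i)\Bigl(\prod_{j>i}M_j\Bigr),
\]
with all factors bounded by $1$. The $i$-th term is then bounded by
\[
\E\sum_{\boldsymbol\alpha}\prod_\ell V_{\alpha_\ell}\,|M_{\boldsymbol\alpha^{(i)}}-\bar M_{\boldsymbol\alpha^{(i)}}| .
\]
The labels outside $C_i$ sum out, since $\sum_\alpha V_\alpha=1$. By Cauchy--Schwarz with respect to the probability $\E\sum\prod V$, this is at most $\sqrt{D_{|C_i|}}$. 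One subtlety is that the $\ell\in C_i$ might have repeated labels, but $D_k$ sums over all tuples including repeats, so it matches directly.

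\emph{Final bound and mixture claim.} The second inequality in \eqref{eq:resamplingerror} follows from $|C_i|\le n$ and from $D_j$ with $j\le n$. The mixture statement is \eqref{eq:MProot}: conditionally on $U_*$, the field $\widehat b$ is of the form \eqref{eq:MPtrial} with $h_0=\widehat h_{U_*}$.

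I expect the main care to be needed in justifying Step 2. Specifically, one must show that the joint law of the weights and the moment functionals of $b_\alpha(v_i)$ transfers to the representation, where only site-averaged equalities \eqref{eq:heoverlaps} are stated. I would handle this by noting that the moment depends on $b$ only through the multioverlaps of each site's tuple, since the sites $v_i$ are i.i.d. and independent of $G$. Those multioverlaps have the same joint law with the weights by Lemma \ref{lem:A3}.
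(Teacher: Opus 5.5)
Your proposal is correct and follows essentially the same route as the paper's proof. You average the thermal coins, replace states by barycenters at cost $(\sum_i|C_i|)\sqrt\eta$ via \eqref{eq:baryL1}, and use \eqref{eq:heoverlaps} together with the independence of the copied variables to identify the two site-averaged expressions as $\prod_i M_{\boldsymbol\alpha_{C_i}}$ and $\prod_i\E_U[M_{\boldsymbol\alpha_{C_i}}\mid U_*]$, then telescope and apply Cauchy--Schwarz after marginalizing the labels outside $C_i$. (The linearity remark in your Step~1 is not needed; the absolute-value telescoping with all factors bounded by one already suffices.)
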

\begin{proof}
Since $\sigma^2=1$, a product of spins with repetitions reduces to one without, so the monomials on the left determine all finite spin distributions. Given the cluster labels of the $n$ replicas, replace each sampled state by the barycenter of its cluster. Since $\sigma_i^\ell$ has conditional mean $m^\ell(v_i)$ given the state and the site variable, and all factors are bounded by one, this costs at most $\sum_i|C_i|\sqrt\eta$ by \eqref{eq:baryL1}. Put
\[
 \overline M_{\boldsymbol\alpha_C}(U_*)
       =\E_U[M_{\boldsymbol\alpha_C}\mid U_*],
\]
where $\boldsymbol\alpha_C$ is the subtuple of labels indexed by $C$. After averaging the site variables, the barycenter expectation and the resampled expectation are, by \eqref{eq:heoverlaps} and by the independence over sites of the resampled variables,
\[
 \E\sum_{\boldsymbol\alpha}\left(\prod_{\ell=1}^nV_{\alpha_\ell}\right)
                       \prod_{i=1}^s M_{\boldsymbol\alpha_{C_i}},
 \qquad
 \E\sum_{\boldsymbol\alpha}\left(\prod_{\ell=1}^nV_{\alpha_\ell}\right)
                       \prod_{i=1}^s\overline M_{\boldsymbol\alpha_{C_i}}(U_*).
\]
In the first expression the product over sites is evaluated at the same state variables; in the second, each site sees independent nonroot state variables, so its factor is the conditional mean given $U_*$. All factors have absolute value at most one. Telescope the two products, one factor at a time, and apply Cauchy--Schwarz to each difference. Marginalizing the replica labels not used in $C_i$ turns the weights into those of \eqref{eq:Dk} with $k=|C_i|$, so each term is at most $\sqrt{D_{|C_i|}}$.

Finally, conditionally on $U_*=u$, the field \eqref{eq:MProot} is exactly a trial with $h_0=\widehat h_u$, including its independent root site coordinates. Averaging the conditional spin law over $U_*$ gives the mixture.
\end{proof}

Theorem \ref{thm:main} and Proposition
\ref{prop:MPapprox} show that the finite-dimensional
spin distributions are approximated by mixtures of
these finite M\'ezard--Parisi fields. Continuity of the
cavity functional then gives the following consequence.

\begin{theorem}[Variational reduction]\label{thm:variational}
For every fixed $\epsilon>0$ and every $G\in\cG_\epsilon$,
\begin{equation}\label{eq:Pbound}
 \cP_\epsilon(G)\ge\inf_{\cT_{\rm root}}\cP_\epsilon.
\end{equation}
\end{theorem}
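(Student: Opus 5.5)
We will compare $\cP_\epsilon(G)$ with the functional evaluated on the resampled fields \eqref{eq:MProot}, at a sequence of cuts with $\delta,\eta\to0$, and pass to the limit using Theorem \ref{thm:main} and Proposition \ref{prop:MPapprox}.

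\emph{Step 1: finite-dimensional reduction and continuity.} Both terms of \eqref{eq:PG} have the form $\E\log\langle e^{X(m)}\rangle$, where $X(m)$ is a random Poisson sum of clauses evaluated at $m(v_{\cdot})$ for fresh site coordinates. First truncate: bound the number of clauses by $L$ and the arity of the REM clauses by $d_0$. By \eqref{eq:clauseintegrability} and \eqref{eq:tablemax}, the error is small uniformly in $G$, since $|\log\langle e^{X}\rangle|\le\sum_a J_a$ and the tail sums have small expectation. On the truncated event, and conditionally on the clause tables, expand
\[
 \E\log\langle e^{X}\rangle=\E\log(1+(\langle e^{X}\rangle-1))
\]
after a normalization $e^{X}e^{-\sum J_a}\in(0,1]$. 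Then approximate $\log$ on $[e^{-2\sum J_a},1]$ by a polynomial of degree $n$, again uniformly given the bounded clause data. Since $e^{\theta(x)}$ is multiaffine in the inputs by \eqref{eq:extension}, each power $\langle e^{X}\rangle^p$ is a finite linear combination, with clause-dependent coefficients, of replica moments $\E\prod_{\ell\le p}\prod_{i\in C}m^\ell(v_i)$. These are exactly spin moments $\E\langle\prod\sigma_i^\ell\rangle$ with $s\le L d_0$ sites and $n\le p$ replicas. The coefficients are bounded by $e^{C\sum J_a}$, whose expectation is finite by \eqref{eq:clauseintegrability}. Thus for every $\gamma>0$ there are finitely many monomials such that, whenever two spin laws agree on those monomials up to $\kappa(\gamma)$, their functionals differ by at most $\gamma$.

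\emph{Step 2: apply the approximation.} The same polynomial expansion applies to the cascade side. For the field $\widehat b$, the quantity $\log\sum_\alpha V_\alpha e^{A_\alpha}$ conditionally on $U_*$ expands into the moments on the right side of \eqref{eq:resamplingerror}. Because of the logarithm, however, the mixture over $U_*$ must be handled with care. We condition on $U_*$ throughout. The functional evaluated on the $\widehat b$ field equals $\E_{U_*}\cP_\epsilon(r,\boldsymbol\zeta,\widehat h_{U_*})$, since the cascade weights and all site arrays are independent of $U_*$ and $U_*$ is global. The sum over $\alpha$ is the $\langle\cdot\rangle$ average for the cascade measure, so each monomial of the expansion, as a function of $U_*$, is of the form treated in Proposition \ref{prop:MPapprox}. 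On the $G$ side we likewise condition on $U_*$; Lemma \ref{lem:A3} gives equality of joint laws of $U_*$, the weights and the barycenter arrays, so $\cP_\epsilon(G)$ can be computed through the barycenters up to the $\sqrt\eta$ error of \eqref{eq:baryL1}. The telescoping argument of the proposition, done conditionally on $U_*$ before taking $\E$ (the Cauchy--Schwarz bound still yields $\sqrt{D_k}$ after averaging), shows that
\[
 \bigl|\cP_\epsilon(G)-\E_{U_*}\cP_\epsilon(r,\boldsymbol\zeta,\widehat h_{U_*})\bigr|
 \le\gamma+C_\gamma\Bigl(\sqrt\eta+\max_{j\le n}\sqrt{D_j}\Bigr).
\]
Hence some $u$ satisfies $\cP_\epsilon(r,\boldsymbol\zeta,\widehat h_u)\le\cP_\epsilon(G)+\gamma+o(1)$, and each such $\widehat h_u$ lies in $\cT_{\rm root}$.

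\emph{Step 3: limits.} Let the cut become fine. Theorem \ref{thm:main} gives $D_j\to0$ for each $j\le n$, and $\eta\to0$. Then let $\gamma\to0$.

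\emph{Main obstacle.} The hardest part is the passage through the logarithm in Step 1. The polynomial approximation must be uniform while the clause data are unbounded, and the barycenter replacement must be valid inside $\log\langle\cdot\rangle$ and not only at the level of moments. I would first try truncating $\sum J_a\le T$, using \eqref{eq:clauseintegrability} for the tail. On the truncated event $\langle e^{X}\rangle\in[e^{-T},e^{T}]$, and Weierstrass approximation of $\log$ on this interval by a polynomial gives the uniform error there.
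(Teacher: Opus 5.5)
Your proposal is correct and follows essentially the same route as the paper: truncate clause counts, arities and magnitudes uniformly over the spin law, approximate the logarithm by a polynomial on a compact subset of $(0,\infty)$, use the multiaffinity of \eqref{eq:extension} to expand into finitely many spin moments controlled by Proposition \ref{prop:MPapprox} and Theorem \ref{thm:main}, and then use that $U_*$ is averaged outside the logarithms to bound $\E_{U_*}\cP_\epsilon(r,\boldsymbol\zeta,\widehat h_{U_*})$ from below by $\inf_{\cT_{\rm root}}\cP_\epsilon$. The extra conditioning on $U_*$ in your Step 2 is harmless but unnecessary: the polynomial expansion only involves unconditional moments, and the expectations in Proposition \ref{prop:MPapprox} already include $U_*$.
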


The continuity argument and the removal of the global
mixture, root site coordinate and perturbation are
given in Section \ref{sec:bridge}. Together with the
cavity lower bound of Theorem \ref{thm:cavitybound},
they prove \eqref{eq:lowerfinal}, and hence the
formula of Theorem \ref{thm:MP} when the upper bound
\eqref{eq:FLupper} is available. The main task is
therefore Theorem \ref{thm:main}, to which the
following proof sections are devoted.

\section{Outline of the proof}\label{sec:outline}

\paragraph{The induction in one picture.} The main task is to prove the approximate reproducibility statement of Theorem \ref{thm:main}, namely, that the multioverlaps $M_{\boldsymbol\alpha}$ are approximately independent of the nonroot state variables $U$ on average. To utilize positivity, we will be proving an equivalent statement for the so-called \emph{positive multioverlaps},
$$
M_{\boldsymbol\alpha}^S:= \E_Z \prod_{\ell=1}^k S^{\alpha_\ell},\quad
\mbox{ where }\quad S^\alpha=(1+b_\alpha)/2\in[0,1],
$$
so we will describe the induction in terms of these. 

Consider $k$ replicas whose tree $T$ first branches at depth $a$, as in
Figure \ref{fig:induction-stem}. Fix the state and site prefixes $(U_{\le p},Z_{\le p})$ through depth $p\le a$. The remaining variables lie on the common stem $(p,a]$ or in the child subtrees $T_1,\ldots,T_b$.

We will aim to show that averaging $\prod_{\ell=1}^k S^{\alpha_\ell}$ in the site variables $Z_{>p}$ above depth $p$ -- which will be called a \emph{future multioverlap} $F_T^p$ -- makes it almost independent of the state variables $U_{>p}$ also above depth $p$, on average over RPC tree shapes, uniformly in the retained depth \(p\), with estimates independent of the total number of levels. This dependence will be measured by \emph{covariance norms} introduced in Section \ref{sec:induction}.  These norms might look mysterious at first, but in fact they are very natural, originating in Lemma 2 in \cite{rRSB}.

\begin{figure}[t]
\centering
\begin{tikzpicture}[x=1.12cm,y=0.82cm,
  every node/.style={font=\small},
  outline edge/.style={line width=0.65pt},
  outline dot/.style={circle,fill=black,inner sep=1.65pt}]
  \fill[black!4] (-1.24,2.43) rectangle (1.24,4.50);
  \draw[black!35,densely dashed,rounded corners=3pt]
    (-1.24,2.43) rectangle (1.24,4.50);
  \foreach \yy/\lab in {0/p,1.65/a,4.30/r}{
    \draw[black!25,densely dotted] (-4.55,\yy)--(4.35,\yy);
    \node[anchor=east] at (-4.65,\yy) {$\lab$};
  }
  \node[anchor=east] at (-4.65,4.85) {depth};

  \draw[black!60,densely dashed,line width=0.7pt] (0,-0.70)--(0,0);
  \draw[line width=1.2pt] (0,0)--(0,1.65);
  \node[outline dot] at (0,0) {};
  \node[outline dot] at (0,1.65) {};
  \node[anchor=west,align=left] at (0.30,0.82)
    {$U=U_{(p,a]}$\\[2pt]$Z=Z_{(p,a]}$};
  \node[anchor=east] at (-0.30,0.82) {common stem};
  \node[anchor=west] at (0.30,-0.52)
    {retained prefix: $U_{\le p},Z_{\le p}$};

  \draw[outline edge] (0,1.65)--(-3.05,2.60);
  \draw[outline edge] (0,1.65)--(0,2.60);
  \draw[outline edge] (0,1.65)--(3.05,2.60);
  \node at (-1.65,3.45) {$\cdots$};
  \node at (1.65,3.45) {$\cdots$};

  \draw[outline edge] (-3.05,2.60)--(-3.62,3.36);
  \draw[outline edge] (-3.05,2.60)--(-2.40,4.30);
  \draw[outline edge] (-3.62,3.36)--(-4.00,4.30);
  \draw[outline edge] (-3.62,3.36)--(-3.25,4.30);
  \draw[outline edge] (0,2.60)--(-0.58,3.36);
  \draw[outline edge] (0,2.60)--(0.62,4.30);
  \draw[outline edge] (-0.58,3.36)--(-0.96,4.30);
  \draw[outline edge] (-0.58,3.36)--(-0.22,4.30);
  \draw[outline edge] (3.05,2.60)--(2.40,4.30);
  \draw[outline edge] (3.05,2.60)--(3.62,3.36);
  \draw[outline edge] (3.62,3.36)--(3.25,4.30);
  \draw[outline edge] (3.62,3.36)--(4.00,4.30);
  \foreach \xx in {-4.00,-3.25,-2.40,-0.96,-0.22,0.62,2.40,3.25,4.00}
    \node[outline dot] at (\xx,4.30) {};
  \node at (-3.05,4.80) {$T_1$};
  \node at (0,4.80) {$T_i$};
  \node at (3.05,4.80) {$T_b$};
  \node at (0,5.25) {child variables: $U_{T_i},Z_{T_i}$};
\end{tikzpicture}
\caption{Induction controls the dependence on the child state
variables $U_{T_i}$ after averaging the child site variables $Z_{T_i}$. The common stem remains to be treated by the cavity equations.}
\label{fig:induction-stem}
\end{figure}

Let $D_i$ be the replicas in child $i$. Independence of the child site variables, with all state variables fixed, expresses the future multioverlap $ F_T^p$ as
\[
 F_T^p=\E_{Z_{(p,a]}}\prod_{i=1}^b f_i,\quad
 \mbox{ where}\quad
 f_i=\E_{Z_{T_i}}\prod_{\ell\in D_i}S^{\alpha_\ell}.
\]
Each $f_i$ is itself a future multioverlap of the child $i$, and it involves fewer than $k$ replicas. Thus, the induction hypothesis allows us to approximate $f_i$ by $m_i=\E_{U_{T_i}}f_i$, with the prefix through $a$ held fixed. In this way, all child subtrees can be pruned from the tree as in the Branch Replacement Lemma \ref{lem:branch}. The induction hypothesis thus reduces the problem to showing that the remaining \emph{common-stem moment}
\[
 H_T^p=\E_{Z_{(p,a]}}\prod_{i=1}^b m_i
\]
is approximately independent of the common-stem state variables $U_{(p,a]}$. This is where the cavity equations and diluted REM perturbation will play a crucial role, starting from Section \ref{sec:marking}.\vspace{-3mm}

\paragraph{How overlap geometry becomes a cavity constraint.}
Next, let us briefly explain the key idea of how the cavity equations will be used in order to transfer fine-cut geometric information about overlaps to cavity outputs.
Consider four replicas that separate into four distinct branches
at depth $p$. For this paragraph, denote the overlaps of their barycenters by $R_{ij}=b_{\alpha_i}\cdot b_{\alpha_j}$. All six scalar products
belong to one interval of width at most $\delta$, so
\begin{align*}
&\E\bigl[R_{12}^2-2R_{12}R_{13}+R_{13}R_{24}\bigr]
=\E\bigl[(R_{12}-R_{13})(R_{12}-R_{24})\bigr]
 \le\delta^2,
\end{align*}
where the equality uses replica symmetry.
Now, let us notice that there is a way to rewrite the left-hand side in terms of a conditional covariance of two barycenter spins $b_{\alpha_1}^i$ for $i=1,2$ in \eqref{eq:he} at one replica $\alpha_1$: 
\[
 C_p^b=\Cov(b_{\alpha_1}^1,b_{\alpha_1}^2\mid\mathcal H_p),\quad
 \mbox{ where }\quad
  \mathcal H_p=\sigma(U_{\le p},Z_{\le p}^1,Z_{\le p}^2).
\]
In other words, the covariance is computed with the common prefix variables $U_{\le p},Z_{\le p}^1,Z_{\le p}^2$ fixed. 
It is then easy to check that 
$$
\E (C_p^b)^2 = \E\bigl[R_{12}^2-2R_{12}R_{13}+R_{13}R_{24}\bigr]
 \le\delta^2.
$$
The quantity $\E (C_p^b)^2$ is an example of (the forth power of) a covariance norm.
The cavity equations then allow us to transfer this inequality to the cavity outputs $\xi_i^\alpha$ up to a vanishing cut error for the analogous conditional covariance under the corresponding change of density. This is done in Proposition \ref{prop:cov} below; this also explains why we use the covariance norms in the induction. After this transfer, the cavity magnetizations are correlated through a common change of density, which enforces strong constraints encoded in the perturbation.\vspace{-3mm}

\paragraph{Overview of the rest of the proof.}
These constraints are then leveraged in Section \ref{sec:probes}, where the REM perturbation plays its key role. The fact that the distribution of the random energies vector on a hypercube has full support allows us to translate the covariance estimates above to \emph{prescribed deterministic clauses} with positive multiaffine Boltzmann factors. From this point on, we work with these prescribed clauses and can vary and differentiate their parameters. The derivatives can still be controlled via transferred covariance constraints, and will yield an approximate \emph{factorization of the joint change of density} in Proposition \ref{prop:factorization}. This is another key step.

In Section \ref{sec:response}, the decoupled density estimates will transform the covariance estimates of cavity magnetizations at two cavity sites into control of a one-site cavity quantity in covariance norm. For specific choices of prescribed clauses, the Bolthausen--Sznitman invariance property of the RPC will yield identities for \emph{mixed replica tests} in Proposition \ref{prop:mixedcoeff}.

In Section \ref{sec:graphs}, averages of specific future multioverlaps will be extracted from mixed replica tests. This is another place where freedom to prescribe cavity fields will be key. In Proposition \ref{prop:tests}, we will control covariance norms for averages of common-stem moments weighted by shape test functions, in some sense almost completing the induction. The remaining step is to pass from control of these tested averages to control of the covariance norms averaged over tree shapes. Sections \(\ref{sec:compactness}\) and \(\ref{sec:closing}\) achieve this through an explicit compactness argument on the space of physical overlap matrices.

Finally, Section \(\ref{sec:bridge}\) proves both Theorem \(\ref{thm:variational}\) and the M\'ezard--Parisi formula in Theorem \(\ref{thm:MP}\). Some elementary facts about the covariance norms and RPC probabilities of tree shapes will be relegated to the appendices.

\section{The induction on the number of replicas}\label{sec:induction}

\subsection{Future multioverlaps, and the covariance norm}\label{sec:kernels}

\paragraph{Future multioverlaps and auxiliary cascades.}
Fix a vertex $\beta\in \N^p$ at depth $p$ and suppose that finitely many leaves $\boldsymbol\alpha =(\alpha_1,\ldots,\alpha_k)$ all descend from $\beta$. Recall that their shape 
\[
 T   =(\alpha_\ell\wedge\alpha_{\ell'})_{\ell,\ell'\le k}
\]
is the $k\times k$ matrix of last-common-ancestor depths. For any shape $T$, we define the \emph{first split} of $T$ as
\begin{equation}\label{eq:aT}
 a(T)=\min_{\ell,\ell'}\alpha_\ell\wedge\alpha_{\ell'}.
\end{equation}
In our case $a(T)\geq p.$ 

Denote the state and site variables on the path to $\beta$, as well as the future variables indexed by $ V=\bigcup_{\ell=1}^k 
       \bigl(p(\alpha_\ell)\setminus p(\beta)\bigr)$ by
\begin{align}\label{eq:variables}
 U_{\le p}&=(U_\gamma)_{\gamma\in p(\beta)},
 \quad
 Z_{\le p}=(Z_\gamma)_{\gamma\in p(\beta)},
\\
 U_T&=(U_\gamma)_{\gamma\in V},
 \qquad
 Z_T=(Z_\gamma)_{\gamma\in V}.
\end{align}
Throughout this section a site index $i$ is fixed, so we omit it and write the site variables $Z_\gamma$ instead of $Z_\gamma^i$.

\begin{remark}
For fixed $p$ and $T$, the particular choice of $\beta$
and $\boldsymbol\alpha$ realizing this shape does not
matter: after identifying corresponding vertices, the
state and site variables have the same joint law, and
the formulas below are unchanged. We use concrete
vertices only for convenience, to describe which
variables are shared. We therefore suppress the
particular choice of these vertices in the notation for the state and site variables above.
\end{remark}

Recall the cluster barycenter representation \eqref{eq:he} and write
\begin{equation}\label{eq:posS}
 S^\alpha:=\frac{1+b_\alpha}{2}
 =\frac{1+
 h\bigl((U_\gamma)_{\gamma\in p(\alpha)},
        (Z_\gamma)_{\gamma\in p(\alpha)}\bigr)}{2}.
\end{equation}
This rescaled magnetization takes values in $[0,1]$. Similarly, we define \emph{positive multioverlaps} by
\begin{equation}\label{eq:MaS}
M_{\boldsymbol\alpha}^S:= \E_Z \prod_{\ell=1}^k S^{\alpha_\ell}
  =\iint \prod_{\ell=1}^k S^{\alpha_\ell}\,dZ_T dZ_{\le p}.
\end{equation}
Since each $S=\frac{1+b}{2}$, this is just a linear combination of the usual multioverlaps of subtuples of $\boldsymbol\alpha$, together with the constant term. The advantage here is that each $S\geq0$, so all these multioverlaps are nonnegative.

The following quantity will be fundamental.

\begin{definition}[Future multioverlaps]
Given a shape $T$ with the first split $a(T)\geq p,$ define the \emph{future multioverlap of shape $T$ at depth $p$} by
\begin{equation}\label{eq:FT}
 F_T^p(U_{\le p},U_T,Z_{\le p})
   =\int \prod_{\ell=1}^k S^{\alpha_\ell}\,dZ_T.
\end{equation}
\end{definition}

Notice that averaging $F_T^p(U_{\le p},U_T,Z_{\le p})$ also over the retained site prefix $Z_{\le p}$ gives the full multioverlap $M_{\boldsymbol\alpha}^S$ above.

Recall the probability $ \mu_k^0(T)$ of a labeled shape in \eqref{eq:muglobal}. For shapes with the first split $a(T)\geq p$, we will also want to average them with respect to the measure
\begin{equation}
 \mu_k^p(T)=\E\sum_{\boldsymbol\alpha:T(\boldsymbol\alpha)=T}
                                 V_{\alpha_1}\cdots V_{\alpha_k},
\end{equation}
where $(V_\alpha)$ are the weights of an auxiliary RPC with depths $\{p,\ldots,r\}$ and parameters $\zeta_p,\ldots,\zeta_{r-1}$. In other words, for two replicas from this RPC, the probabilities of their last-common-ancestor depths $\alpha_1\wedge\alpha_{2}$ are
\begin{equation}\label{eq:localweights}
 w_a^{(p)}=
 \begin{cases}
   \zeta_p,&a=p,\\
   \zeta_a-\zeta_{a-1}=w_a,&p<a\le r,
 \end{cases}
 \qquad
 \sum_{a=p}^r w_a^{(p)}=1,
\end{equation}
with the convention $\zeta_r=1$. This corresponds to the root of the tree shifting to depth $p$, while retaining the depth values $\{p,\ldots,r\}$ of the original tree. Note that this is not the original shape law conditioned on all replicas sharing a depth-$p$ cluster.

\paragraph{The covariance norm.}
For a bounded measurable function $f=f(U_{\le p},U_T,Z_{\le p})$ of the variables \eqref{eq:variables} corresponding to some shape $T$ with the first split $a(T)\geq p$, define
\begin{equation}\label{eq:Xnorm}
 \|f\|_p^4
 :=\E_{U_{\le p}}\iint
       \left|
         \int
           f(U_{\le p},U_T,Z)
           f(U_{\le p},U_T,Z')\,dU_T
       \right|^2
       dZ\,dZ'.
\end{equation}
The expression on the right is homogeneous of degree four in \(f\), so we take its fourth root to obtain a norm. The subscript $p$ here refers to the tree depth, not to an $L^p$-norm. Variables $Z$ and $ Z'$ are independent copies of the site prefix $Z_{\le p}$. The two factors in the inner integral use the same future state variables $U_T$.  Lemma \ref{lem:normfacts} in Appendix \ref{app:norm} proves that $\|f\|_p$ is a norm. It also proves the following contraction properties of $\|f\|_p$, together with a comparison with the ordinary $L^2$ norm. Some of these properties follow directly from Jensen's inequality. 

\begin{itemize}
\item Define centering in the future state variables by
\begin{equation}\label{eq:center}
 \Pi_p f
   =f-\E_{U_T}[f\mid U_{\le p},Z_{\le p}].
\end{equation}
Centering in the future state variables is a contraction:
\[
 \|\Pi_p f\|_p\le\|f\|_p.
\]

\item If the future state variables are split into two disjoint
collections $U_T=(U_A,U_B)$, averaging over one collection
also contracts the norm:
\[
 \left\|\E_{U_B}f\right\|_p\le\|f\|_p.
\]
Here $U_{\le p}$, $U_A$ and $Z_{\le p}$ are held fixed.

\item Finally, suppose that $f$ also depends on an extra
independent site prefix $W$. Averaging this prefix gives
\[
 \left\|\E_W f\right\|_p\le\|f\|_p.
\]
On the right, the site variable in \eqref{eq:Xnorm}
is the tuple $(Z_{\le p},W)$; on the left, only
$Z_{\le p}$ remains.

\item 
We will also use the following comparison with the
ordinary $L^2$ norm. For any bounded function $e$ with
$|e|\le M$,
\begin{equation}\label{eq:normconversion}
 \|e\|_p^4
 \le
 \E_{U_{\le p}}
   \left(\E_{U_T,Z_{\le p}}|e|^2\right)^2
 \le M^2\E|e|^2.
\end{equation}
\end{itemize}

We will now define the covariance norm of a bounded measurable function $F=F(U_{\le p},U_T,Z_{\le p})$, with the main example being the future multioverlaps in \eqref{eq:FT}. Recall the centering in \eqref{eq:center} with respect to all future state variables.
The inner integral in (\ref{eq:Xnorm}) with $f=\Pi_p F$,
\begin{align}
&C_T(Z,Z'):= \int
           \Pi_p F(U_{\le p},U_T,Z)
           \Pi_p F(U_{\le p},U_T,Z')\,dU_T,
 \label{eq:CUZZT}
\end{align}
is the covariance, with respect to $U_T$, of $F(U_{\le p},U_T,Z)$ and $F(U_{\le p},U_T,Z')$, with the state prefix $U_{\le p}$ fixed. Its dependence on this prefix is suppressed in the notation.

\begin{definition}[Covariance norm]
Given a bounded function $F=F(U_{\le p},U_T,Z_{\le p})$, we call the norm of its centering, $\|\Pi_p F\|_p$, its covariance norm at depth $p$. Explicitly,
\begin{align}\label{eq:Covnorm}
\|\Pi_p F\|_p^4
 &=\E_{U_{\le p}}\iint
       \left|
        C_T(Z,Z')\right|^2
       dZ\,dZ'.
\end{align}
\end{definition}
The covariance norm is a norm of the centering $\Pi_p F$, thus a seminorm of $F$. For simplicity, we will sometimes call it a covariance norm of $F.$ It measures the dependence of $F$ on the future
state variables $U_T$ through this covariance kernel. The norm vanishes precisely when $F$ does not depend on
$U_T$, up to null sets, or equivalently when
\[
 F=\E_{U_T}[F\mid U_{\le p},Z_{\le p}]
 \qquad\text{almost surely}.
\]
For the future multioverlaps, this gives
a covariance-based measure of their dependence on the
future state variables.

\subsection{The geometric estimate at one site, and a stronger induction quantity}\label{sec:geometry}

\paragraph{The geometric estimate at one site}
In this section, we consider a future multioverlap in the case $k=1$, i.e. when the shape $T$ corresponds to one path from the root to a leaf. We show that its covariance norms for all $p$ are controlled by the geometry of the cut. For the moment, we will work with the usual $1$-overlap (magnetization) $b$ instead of a shifted positive one, $S=\frac{1+b}{2}$. 

Recall the barycenter representation in \eqref{eq:he}. On a canonical
root-to-leaf path, write
\[
 b=h(U_0,\ldots,U_r;Z_0,\ldots,Z_r),
\]
where we write $0$ instead of $*$ for the root. Fix a depth $p<r$ and separate the retained prefixes from the
future coordinates:
\[
 \begin{aligned}
 U_{\le p}&=(U_0,\ldots,U_p),
 &U_T&=(U_{p+1},\ldots,U_r),\\
 Z_{\le p}&=(Z_0,\ldots,Z_p),
 &Z_T&=(Z_{p+1},\ldots,Z_r).
 \end{aligned}
\]
Define a future $1$-overlap at depth $p$ by 
\begin{equation}\label{eq:gp}
 F(U_{\le p},U_T,Z_{\le p})
   =\E_{Z_T} h(U_{\le p},U_T,Z_{\le p},Z_T).
\end{equation}
The dependence of $F$ on the depth $p$ is suppressed
in the notation. Also, we will write $U$ for $U_T$ to lighten notation, so $F= F(U_{\le p},U,Z_{\le p}).$

Centering in the future state variables gives
\begin{equation}
 \Pi_p F
   =F-\E_U[F\mid U_{\le p},Z_{\le p}].
\end{equation}
Recall the conditional covariance kernel in \eqref{eq:CUZZT},
\begin{align}
 C(Z,Z')
 &=\int
     \Pi_p F(U_{\le p},U,Z)
     \Pi_p F(U_{\le p},U,Z')\,dU.
 \label{eq:CUZZ}
\end{align}
where $Z,Z'$ are two independent copies of the site prefix $Z_{\le p}.$ The dependence of $C$ on $U_{\le p}$ and on $p$ is suppressed in the notation. The retained state prefix $U_{\le p}$ is fixed in this covariance, and both factors use the same future state variables $U$. Thus the kernel records how the magnetizations at two site
prefixes vary together when the future state variables vary.

\begin{lemma}[Geometric variance and covariance bounds]
\label{lem:gram}
For every $p<r$,
\begin{align}
 \E|\Pi_p F|^2
   &\le\delta,
   \label{eq:geomL2}\\
\|\Pi_p F\|_p^4 = \E_{U_{\le p}}\iint
       |C(Z,Z')|^2\,dZ\,dZ'
   &\le\frac{\delta^2}{4}.
   \label{eq:geomcov}
\end{align}
\end{lemma}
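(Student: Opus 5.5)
The plan is to reduce both bounds to a statement about a Gram kernel whose off-diagonal values lie in one overlap bin, and then to handle the diagonal separately. Fix the retained state prefix $U_{\le p}$ and regard $x(U):=F(U_{\le p},U,\cdot)$ as a vector in the Hilbert space $L^2(dZ_{\le p})$. Define the kernel
\[
 K(U,U')=\int F(U_{\le p},U,Z)F(U_{\le p},U',Z)\,dZ .
\]
Take two leaves $\alpha,\alpha'$ whose last common ancestor is at depth exactly $p$. They share $U_{\le p},Z_{\le p}$ and have independent future state and site variables. By \eqref{eq:heoverlaps}, and by integrating out the two independent future site prefixes, $b_\alpha\cdot b_{\alpha'}=K(U,U')$ with $U,U'$ independent copies of the future state variables. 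Since $\alpha\wedge\alpha'=p<r$, this overlap lies in $J:=\overline{\conv(I_p\cap\supp\zeta)}$, an interval $[c,c+\delta]$. This holds for almost every $(U_{\le p},U,U')$, with the same almost-sure transfer from the original realization that was used for \eqref{eq:heoverlaps}.

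\textbf{Diagonal step.} Next I bound the diagonal, $K(U,U)=\|x(U)\|^2\le c+\delta$ for a.e.\ $U$; this is the step I expect to be the main obstacle. It is not an overlap of two distinct leaves, and a positive semidefinite kernel with nearly constant off-diagonal entries may have an arbitrary diagonal. The plan is a separability argument. By Fubini, for a.e.\ $U$ the set of $U'$ with $K(U,U')\in J$ has full measure. Also, for a.e.\ $U$ the point $x(U)$ lies in the support of the pushforward law of $x$ in the separable space $L^2$. Hence one can choose $U'_n$ in the good set with $x(U'_n)\to x(U)$ in $L^2$, and then
\[
 \|x(U)\|^2=\lim_n\langle x(U'_n),x(U)\rangle\le c+\delta .
\]
Fubini again gives that $\langle x(U),\bar x\rangle=\E_{U'}K(U,U')\in J$ for a.e.\ $U$, where $\bar x=\E_Ux$. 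It also gives $\|\bar x\|^2=\E K(U,U')\ge c$.

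\textbf{Bound \eqref{eq:geomL2}.} With the conditional prefix fixed, $\Pi_pF=x(U)-\bar x$. Therefore
\[
 \E_{U,Z}|\Pi_pF|^2=\E_U\|x(U)\|^2-\|\bar x\|^2\le(c+\delta)-c=\delta ,
\]
and averaging over $U_{\le p}$ gives \eqref{eq:geomL2}.

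\textbf{Bound \eqref{eq:geomcov}.} Expanding the square and using Fubini,
\[
 \iint|C(Z,Z')|^2\,dZ\,dZ'=\E_{U,U'}\bigl(\langle x(U)-\bar x,\,x(U')-\bar x\rangle\bigr)^2 .
\]
The inner product is the doubly centered kernel
\[
 \widetilde K(U,U')=K(U,U')-\kappa(U)-\kappa(U')+\textstyle\E\kappa,
 \qquad \kappa(U)=\E_{U'}K(U,U') .
\]
Double centering is the composition of two $L^2(dU\,dU')$ orthogonal projections, so it is a contraction. Hence $\E\widetilde K^2$ is at most the squared $L^2$ norm of $K$ centered in $U'$ only, that is, $\E_U\operatorname{Var}_{U'}K(U,U')$. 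For a.e.\ $U$, the variable $K(U,U')$ takes values in an interval of length $\delta$, so Popoviciu's inequality gives $\operatorname{Var}_{U'}K(U,U')\le\delta^2/4$. Averaging over $U_{\le p}$ yields \eqref{eq:geomcov}. This route uses only the off-diagonal information and no diagonal bound.
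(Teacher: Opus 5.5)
Your proposal is correct and follows the paper's proof essentially step by step. It uses the same Gram kernel $K(U,U')\in[a_p,b_p]$ and the same support argument in $L^2(dZ)$ for the diagonal: your approximating sequence $U'_n$ is a pointwise version of the paper's remark that the closed constraint set contains $\supp\mu\times\supp\mu$. It also uses the same Fubini identity $\iint C^2=\iint (K^\circ)^2$, with double centering as a contraction. The only difference is cosmetic: you apply a single centering followed by Popoviciu's inequality, whereas the paper subtracts the midpoint $c_p$ (which double centering annihilates) and bounds $\|K-c_p\|_{L^2}\le\delta/2$ directly; these are the same estimate.
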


The left-hand side of \eqref{eq:geomcov} is the covariance norm defined in \eqref{eq:Xnorm} above. The lemma therefore gives the
one-replica instance of the control that we will seek for
future multioverlaps of arbitrary fixed order, and will provide the base of the induction.

Lemma \ref{lem:gram} is a quantitative version of
\cite[Lemma 1]{rRSB}. There the overlap of two states
separating at depth $p$ takes a single value, so $\delta=0$, and it is proved that $F$ does not depend on the future state variables. Indeed, when $\delta=0$, \eqref{eq:geomL2} gives $\Pi_p F=0$ almost surely; in other words, $F$ does not depend on the future state variables $U$. The same conclusion follows from \eqref{eq:geomcov}, since its left-hand side is the fourth power of the norm of $\Pi_p F$.

\begin{proof}
Let $[a_p,b_p]$ be the closed convex hull of
$I_p\cap\supp\zeta$, so that $b_p-a_p\le\delta$.
Consider two leaves whose last common ancestor has depth $p$.
They share the retained state and site prefixes and have
independent future coordinates $U,U'$ and $Z_T,Z_T'$.
After averaging the site variables $Z_T,Z_T'$, their barycenters have
scalar product
\begin{equation}\label{eq:Gram}
 K(U,U')
   :=\int F(U_{\le p},U,Z)F(U_{\le p},U',Z)\,dZ
   \in[a_p,b_p].
\end{equation}
This relation holds almost everywhere in
$(U_{\le p},U,U')$. Indeed, the scalar products of clusters
separating at depth $p$ lie in this interval
(Section \ref{sec:cutgeometry}). Lemma \ref{lem:A3} preserves their
joint law in the hierarchical representation, where the
scalar product is the displayed kernel evaluated at path
coordinates having the product uniform law.

Fix a retained state prefix $U_{\le p}$ for which \eqref{eq:Gram}
holds almost everywhere in $(U,U')$, and suppress it from
the notation. Write
\[
 F(U,Z)=F(U_{\le p},U,Z),
 \qquad
 \mybar F(Z)=\int F(U,Z)\,dU.
\]
Regard $F(U,\,\cdot\,)$ as a random element of the separable
Hilbert space $L^2(dZ)$, with law $\mu$ and barycenter
$\mybar F$. The bound \eqref{eq:Gram} means that
$a_p\le(\varphi,\psi)\le b_p$
holds for $\mu^{\otimes2}$-almost every pair.
Since the scalar product is continuous, this holds on
$\supp\mu\times\supp\mu$. In particular, taking
$\varphi=\psi$ gives
\[
 \int\|\varphi\|_2^2\,d\mu(\varphi)\le b_p,
 \qquad
 \|\mybar F\|_2^2
   =\iint(\varphi,\psi)\,d\mu(\varphi)\,d\mu(\psi)
   \ge a_p.
\]
Consequently,
\[
 \begin{aligned}
 \E_{U,Z}|\Pi_p F(U,Z)|^2
   &=\int\|\varphi-\mybar F\|_2^2\,d\mu(\varphi)
   =\int\|\varphi\|_2^2\,d\mu(\varphi)
       -\|\mybar F\|_2^2
   \le b_p-a_p\le\delta.
 \end{aligned}
\]
Averaging over $U_{\le p}$ proves \eqref{eq:geomL2}.

For the covariance bound, let
\[
 f(U,Z)=\Pi_p F(U,Z)=F(U,Z)-\mybar F(Z).
\]
Let $\Pi_p^{(1)}$ and $\Pi_p^{(2)}$ denote centering in
$U$ and $U'$, respectively, with the other coordinate
held fixed. Double centering of the Gram kernel gives
\begin{align*}
 K^\circ(U,U')
   &:=(\Pi_p^{(1)}\Pi_p^{(2)}K)(U,U')\\
   &=K(U,U')
      -\int K(V,U')\,dV
      -\int K(U,V)\,dV
      +\iint K(V,V')\,dV\,dV'\\
   &=\int
       \bigl(F(U,Z)-\mybar F(Z)\bigr)
       \bigl(F(U',Z)-\mybar F(Z)\bigr)\,dZ\\
   &=\int f(U,Z)f(U',Z)\,dZ.
\end{align*}
Thus $K^\circ$ is a kernel on the future state coordinates,
whereas
\[
 C(Z,Z')=\int f(U,Z)f(U,Z')\,dU
\]
is a kernel on the site prefixes. Expanding the square and
applying Fubini's theorem identifies their squared
$L^2$ norms:
\begin{align}
 \iint C(Z,Z')^2\,dZ\,dZ'
 &=\int f(U,Z)f(U,Z')f(U',Z)f(U',Z')
                  \,dU\,dU'\,dZ\,dZ' \notag\\
 &=\iint K^\circ(U,U')^2\,dU\,dU'.
 \label{eq:gramfubini}
\end{align}
The two centering operators are commuting orthogonal
projections in $L^2(dU\,dU')$. Their product is therefore
a contraction and annihilates constants. With
$c_p=(a_p+b_p)/2$, it follows that
\[
 \begin{aligned}
 \|K^\circ\|_{L^2(dU\,dU')}^2
   &=\|\Pi_p^{(1)}\Pi_p^{(2)}(K-c_p)\|_{L^2(dU\,dU')}^2\\
   &\le\|K-c_p\|_{L^2(dU\,dU')}^2
   \le\frac{(b_p-a_p)^2}{4}
   \le\frac{\delta^2}{4}.
 \end{aligned}
\]
Together with \eqref{eq:gramfubini}, averaging over
$U_{\le p}$ proves \eqref{eq:geomcov}.
\end{proof}

For the induction we will use the positive magnetizations
$
 S_\alpha=\frac{1+b_\alpha}{2}\in[0,1].
$
For one path, the future site average of $S$ is $(1+F)/2$.
Its centered covariance kernel is
$C/4$, and hence
\begin{equation}\label{eq:S}
 \left\|\Pi_p\frac{1+F}{2}\right\|_p^4
   =\frac1{16}\|\Pi_p F\|_p^4
   \le\frac{\delta^2}{64}.
\end{equation}
This will give the base case of the induction.
The nonnegativity of $S$ will be used in
Section \ref{sec:compactness}.

\paragraph{A stronger induction quantity.}
If we recall the defect $D_k$ defined in \eqref{eq:Dk}, we can similarly define the defect for positive multioverlaps \eqref{eq:MaS} by
\begin{equation}\label{eq:DkS}
 D_k^S :=\E\sum_{\boldsymbol\alpha}V_{\alpha_1}\cdots V_{\alpha_k}
       \bigl(M_{\boldsymbol\alpha}^S
                    -\E_U[M_{\boldsymbol\alpha}^S\mid U_*]\bigr)^2 .
\end{equation}
We will prove in Lemma \ref{lem:base} below that the defects $D_j^S$, $1\le j\le k$, control $D_k$, so we are not losing any information. To control $D_k^S$ and $D_k$, we introduce a stronger induction quantity:
\begin{equation}\label{eq:kappa}
 \kappa_k
 =\sup_{0\le p<r}
     \sum_T\mu_k^p(T)\|\Pi_p F_T^p\|_p^4,
\end{equation}
where, for each depth $p$, the sum is over shapes $T$ with the first split $a(T)\geq p$. It averages the fourth power of the covariance norm of the future multioverlap $F_T^p$, defined in \eqref{eq:Covnorm}. We then take the maximum over all depths.

We will prove $\kappa_k\to0$ by induction on $k$,
assuming $\kappa_j\to0$ for every $j<k$, as the cut
errors $\delta$ and $\eta$ tend to zero. Lemma \ref{lem:base} below shows that these estimates imply $D_k\to0$. In fact, the proof shows that the depth-$0$ terms ($p=0$ in \eqref{eq:kappa}) for $1\le j\le k$ already suffice to control $D_k$, and the main reason we take the supremum $\sup_{0\le p<r}$ over all depths is to make the induction argument work.

\begin{lemma}[Base case and the scalar defect]\label{lem:base}
At each cut,
\begin{equation}\label{eq:base}
 \kappa_1\le\frac{\delta^2}{64},
 \qquad D_k^S\le\sqrt{\kappa_k},
 \qquad D_k\le9^k\max_{1\le j\le k}D_j^S,
\end{equation}
where $D_k^S$ is defined in \eqref{eq:DkS}. Consequently, $\kappa_j\to0$ for $1\le j\le k$ implies $D_k\to0$.
\end{lemma}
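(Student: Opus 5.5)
The three inequalities in \eqref{eq:base} have independent elementary proofs, and I will treat them in order: the base case from \eqref{eq:S}, then the defect bound via Cauchy--Schwarz at depth $p=0$, then the passage from positive to ordinary multioverlaps by expanding $b=2S-1$.

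\emph{Base case.} For $k=1$ there is exactly one shape: a single path, with $a(T)=r\ge p$ for every $p$. So $\mu_1^p(T)=1$ for each $p<r$, and the sum in \eqref{eq:kappa} has a single term. For this shape the future multioverlap is $F_T^p=\int S\,dZ_T=(1+F)/2$, where $F$ is the future $1$-overlap \eqref{eq:gp}. Then \eqref{eq:S} gives $\|\Pi_pF_T^p\|_p^4\le\delta^2/64$ uniformly in $p$. Taking the supremum over $p$ gives $\kappa_1\le\delta^2/64$.

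\emph{Defect bound $D_k^S\le\sqrt{\kappa_k}$.} I use only the depth-$0$ term of \eqref{eq:kappa}. The weights are independent of the represented array, and the conditional law of the array given $\boldsymbol\alpha$ depends only on the shape. Hence
\[
D_k^S=\sum_T\mu_k^0(T)\,\E\bigl(M^S_{\boldsymbol\alpha}-\E_U[M^S_{\boldsymbol\alpha}\mid U_*]\bigr)^2 ,
\]
where $\boldsymbol\alpha$ is any tuple of shape $T$, and at $p=0$ we have $\mu_k^0$ as in \eqref{eq:muglobal}. At $p=0$ we have $U_{\le0}=U_*$ and $Z_{\le0}=Z_*$, so $M^S_{\boldsymbol\alpha}=\int F_T^0\,dZ$ and $\E_U[M^S_{\boldsymbol\alpha}\mid U_*]=\int\E_{U_T}F_T^0\,dZ$. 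Their difference is $\int\Pi_0F_T^0\,dZ$.

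Squaring this difference and integrating over $U_T$ with $U_*$ fixed gives $\iint C_T(Z,Z')\,dZ\,dZ'$, with $C_T$ the kernel from \eqref{eq:CUZZT}. By Cauchy--Schwarz in $(Z,Z')$, this is at most $(\iint C_T^2)^{1/2}$. Applying Jensen in $U_*$ then bounds the expectation by $\|\Pi_0F_T^0\|_0^2$, using \eqref{eq:Covnorm}. One more Cauchy--Schwarz against the probability weights $\mu_k^0$ gives
\[
D_k^S\le\Bigl(\sum_T\mu_k^0(T)\|\Pi_0F_T^0\|_0^4\Bigr)^{1/2}\le\sqrt{\kappa_k}.
\]

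\emph{Passing from $D^S$ to $D$.} Write $b_\alpha=2S^\alpha-1$ and expand
\[
\prod_{\ell\le k}b_{\alpha_\ell}=\sum_{A\subseteq[k]}(-1)^{k-|A|}2^{|A|}\prod_{\ell\in A}S^{\alpha_\ell}.
\]
Averaging over $Z$ gives $M_{\boldsymbol\alpha}=\sum_Ac_AM^S_{\boldsymbol\alpha_A}$, with $\sum_A|c_A|=3^k$. The $A=\emptyset$ term is constant and cancels after subtracting the conditional mean given $U_*$.

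Let $X_A$ denote the centered version of $M^S_{\boldsymbol\alpha_A}$. The weighted Cauchy--Schwarz inequality gives
\[
\Bigl(\sum_A c_AX_A\Bigr)^2\le 3^k\sum_A|c_A|X_A^2 .
\]
In $\E\sum_{\boldsymbol\alpha}V_{\alpha_1}\cdots V_{\alpha_k}X_A^2$, sum out the labels outside $A$ using $\sum_\alpha V_\alpha=1$. This turns the term into exactly $D^S_{|A|}$. Therefore
\[
D_k\le3^k\cdot3^k\max_{1\le j\le k}D_j^S=9^k\max_{j\le k}D_j^S.
\]

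The final consequence follows by chaining the last two inequalities. The only step needing real care is the identification of $D_k^S$ as a $\mu_k^0$-average of per-shape conditional variances. This relies on the independence of the weights from the array, as stated in Lemma~\ref{lem:A3}, and on the observation, made after \eqref{eq:Dk}, that the conditional mean depends only on $U_*$ and the shape. The rest is routine.
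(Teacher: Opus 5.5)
Your proposal is correct and follows essentially the same route as the paper's proof. The paper likewise reads the base case off the one-path bound on the covariance norm of $(1+F)/2$, proves $D_k^S\le\sqrt{\kappa_k}$ at depth $p=0$ via Cauchy--Schwarz in $(Z,Z')$, Jensen in $U_*$, and Cauchy--Schwarz over $\mu_k^0$, and obtains the factor $9^k$ by expanding $b=2S-1$ with $\sum_A|c_A|=3^k$ and marginalizing the unused labels.
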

\begin{proof}
For one replica, $F_T^p=(1+F)/2$, where $F$ is defined in \eqref{eq:gp}. Thus, by \eqref{eq:S},
\[
 \kappa_1=\frac1{16}\sup_{0\le p<r}\|\Pi_pF\|_p^4
          \le\frac{\delta^2}{64}.
\]
For the second bound, work at $p=0$ and write $f_T=\Pi_0F_T^0$, a function of $(U_0,U_T,Z_0)$ (recall that $Z_0=Z_*$). Integrating the root site variable gives 
\[
 \int f_T\,dZ_0=M^S_{\boldsymbol\alpha}-\E_U[M^S_{\boldsymbol\alpha}\mid U_*],
\]
where $M^S$ is the multioverlap defined in \eqref{eq:MaS}. By Fubini, the Cauchy--Schwarz inequality on the probability space of $(Z_0,Z_0')$, and Jensen's inequality in $U_*$,
\begin{align*} 
&\E_{U_*,U_T}\left(M^S_{\boldsymbol\alpha}-\E_U[M^S_{\boldsymbol\alpha}\mid U_*]\right)^2\\
&=\E_{U_*,U_T}\left(\int f_T(U_*,U_T,Z_0)\,dZ_0\right)^2\\
 &=\E_{U_*}\iint\E_{U_T}f_T(U_*,U_T,Z_0)f_T(U_*,U_T,Z_0')\,dZ_0\,dZ_0'
 \\
 &\leq
 \Bigl(\E_{U_*}\iint \Bigl(\E_{U_T}f_T(U_*,U_T,Z_0)f_T(U_*,U_T,Z_0')\Bigr)^2 \,dZ_0\,dZ_0'\Bigr)^{1/2}\\
 &=\|f_T\|_0^2.
\end{align*}
Average over shapes with $\mu_k^0$ and apply Cauchy--Schwarz once more: 
\[
D_k^S\le\sum_T\mu_k^0(T)\|\Pi_0F_T^0\|_0^2
\le\Bigl(\sum_T\mu_k^0(T)\|\Pi_0F_T^0\|_0^4\Bigr)^{1/2}
\le\sqrt{\kappa_k}.
\]
For the third bound, expand
\[
 \prod_{\ell=1}^k(2S^{\alpha_\ell}-1)=\sum_{J\subseteq\{1,\ldots,k\}}2^{|J|}(-1)^{k-|J|}\prod_{\ell\in J}S^{\alpha_\ell}.
\]
The defect of a linear combination of $2^k$ terms with coefficients $c_J$ is at most $(\sum_J|c_J|)\sum_J|c_J|D_J$, where $D_J$ is the defect of the $J$ term, and $\sum_J|c_J|=3^k$. The marginal law of the labels of a subset $J$ of replicas is the RPC shape law for $|J|$ replicas, so the defect of the $J$ term is $D^S_{|J|}$ (zero for $J=\emptyset$).
\end{proof}

\subsection{Child shapes and shape probabilities}\label{sec:shapes}

\paragraph{The child shapes.}
Consider a shape $T=(a_{\ell\ell'})$ (where we write $a_{\ell\ell'}$ for $\alpha_\ell\wedge\alpha_{\ell'}$), such that its first split $a(T)=\min_{\ell,\ell'}a_{\ell\ell'}\ge p.$
Write $a=a(T)$. If $a<r$, all replicas share a path through
depth $a$ and then, at the next level $a+1$ separate into at least two groups.
These groups are the equivalence classes $D_1,\ldots,D_b$
of $\{1,\ldots,k\}$ under the relation
\begin{equation}\label{eq:childpartition}
 \ell\sim\ell'
 \quad\Longleftrightarrow\quad a_{\ell\ell'}\geq a+1.
\end{equation}
We call these sets the \emph{children at the first split}.
Their \emph{sizes} and \emph{child shapes} are
\begin{equation}\label{eq:childshapes}
 k_i=|D_i|,\qquad
 T_i=(a_{\ell\ell'})_{\ell,\ell'\in D_i},\qquad
 \sum_{i=1}^b k_i=k.
\end{equation}
Each $k_i<k$, which is what will allow us to apply the
induction hypothesis to a child. Within each child, the
replica labels may be relabeled $1,\ldots,k_i$; the depths
keep their original values. A nonsingleton child has its
own first split $a(T_i)\geq a+1$, so its replicas may share a
further stem before separating. For a singleton,
$T_i=(r)$.

We write the child-shape probabilities as $\mu_{k_i}^a(T_i)$.
For a nonsingleton child, $k_i\geq 2$, its own first split satisfies
$a(T_i)>a\ge p$, so Lemma \ref{lem:product}(a) below gives
\[
 \mu_{k_i}^a(T_i)=\mu_{k_i}^p(T_i)=\mu_{k_i}^0(T_i).
\]
For a singleton, all three probabilities equal one.
Thus these shape probabilities are the same under the
original cascade and the auxiliary cascades rooted at either
$p$ or $a$. We use the superscript $a$ because the child
moments in the induction retain the prefix variables through depth $a$.

If $a=r$, all replicas share one terminal label. In this
case, we define the children to be the singleton groups
$D_i=\{i\}$, $i=1,\ldots,k$, with child shapes $T_i=(r)$
and $\mu_1^r((r))=1$ by convention. These children represent
separate replica occurrences of that state. This convention
lets us include repeated terminal labels in the same notation.

\paragraph{The probability bound.}
Recall from \eqref{eq:localweights} that $w_a^{(p)}$ is
the probability that two replicas of the auxiliary cascade
rooted at $p$ have overlap (last-common-ancestor) at depth $a$:
\[
 w_a^{(p)}=
 \begin{cases}
   \zeta_p,&a=p,\\
   \zeta_a-\zeta_{a-1}=w_a,&p<a\le r,
 \end{cases}
 \qquad \zeta_r=1.
\]

\begin{lemma}[Shape probabilities]\label{lem:product}
Given $p<r$, let $T$ be a labeled $k$-replica shape, $k\ge2$, such that $a(T)\geq p$.
\begin{enumerate}
\item[(a)] If the first split $a(T)>p$, then $\mu_k^p(T)=\mu_k^0(T)$.
\item[(b)] Let $a=a(T)$, and let $k_1,\ldots,k_b$ and $T_1,\ldots,T_b$ be the child sizes and shapes defined in \eqref{eq:childshapes}, with the singleton convention above when $a=r$. Then
\begin{equation}\label{eq:rootdom}
 \mu_k^p(T)\le w_a^{(p)}\prod_{i=1}^b\mu_{k_i}^{a}(T_i),
\end{equation}
where $\mu_1^a\equiv1$, also at $a=r$.
\end{enumerate}
\end{lemma}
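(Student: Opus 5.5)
We will prove both parts from the standard description of the Ruelle probability cascade: the weight of a leaf is the product, along its path, of the normalized Poisson--Dirichlet weights of the children at each vertex. The key tool is the classical sampling formula for several replicas. Conditionally on the weights, sample $k$ leaves independently. Then the labeled shape probability $\E\sum_{T(\boldsymbol\alpha)=T}V_{\alpha_1}\cdots V_{\alpha_k}$ factorizes over the vertices of the shape tree where branching occurs. At a branching vertex at depth $a$ with $b$ children of sizes (measured in ``blocks'' that will further split) $n_1,\ldots,n_b$, the factor is the Poisson--Dirichlet$(\zeta_a)$ exchangeable partition probability. For a stem vertex (no branching) the factor is a ratio of the same type with one block. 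I will use the explicit form from Appendix~\ref{app:shapes}; equivalently, one can use the coalescent/Bolthausen--Sznitman description, where the shape law is a product of factors indexed by the levels $0,\ldots,r$, each depending only on $\zeta_j$ and on the block structure at that level.

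For (a), observe that if $a(T)>p$, then at every level $j\le p$ all $k$ replicas lie in a single block. In the product formula, a level at which there is one block containing all mass and no split contributes factor $1$. This is because sampling $k$ children of one vertex in the Poisson--Dirichlet partition, conditioned on nothing, and requiring them to coincide contributes the probability of a single block. Hence I need to be careful: the correct statement is that, in the cascade, the event ``all replicas share the child at level $j+1$'' has probability depending on $\zeta_j$ and the number of blocks entering. The auxiliary cascade rooted at $p$ simply has no levels below $p$, but it keeps parameters $\zeta_p,\ldots,\zeta_{r-1}$ at the remaining vertices. So I will compare level by level. Levels $>p$ have identical factors in both cascades. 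For the original cascade, levels $<p$ have all replicas in one block, and the corresponding factor must be checked to equal one. I will verify this with the Bolthausen--Sznitman coalescent picture: going backwards from the leaves, the blocks present just above level $j$ merge at level $j$ according to rate $\zeta$-dependent rules. Once only one block remains (at depth $a(T)>p$), the lower levels carry probability one. The same holds in the auxiliary cascade. Thus the two quantities agree. This bookkeeping through Appendix~\ref{app:shapes} is the main obstacle; the cleanest route is to write $\mu_k^p(T)$ as a product over levels $j=r-1,\ldots,p$ of merge probabilities and note the truncation below the level where one block remains is vacuous.

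For (b), I will use the same product over levels and split it into two parts. The first part consists of the levels above $a$; there the process consists of $b$ independent subtrees, and their product of factors is exactly $\prod_i\mu^{a}_{k_i}(T_i)$, using (a) to identify child probabilities across roots. The second part is the merge at level $a$ of $b\ge2$ blocks into one. That factor is the probability that $b$ blocks merge in one step at level $a$. It is at most the probability that two specified blocks merge there, which equals the two-replica probability $w_a^{(p)}$: $\zeta_p$ if $a=p$ (the auxiliary root merges everything), and $\zeta_a-\zeta_{a-1}$ otherwise. The inequality ``all $b$ merge'' $\le$ ``two given blocks merge'' is monotonicity of events. In the case $a=r$, the singleton convention makes the children products equal one, and the factor is the probability that all $k$ labels coincide. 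This is at most the probability that two coincide, which is $w_r^{(p)}=1-\zeta_{r-1}$, and that gives the bound.
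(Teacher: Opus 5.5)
The gap is in part (b). You claim $\mu_k^p(T)=(\text{factor at depth }a)\times\prod_i\mu_{k_i}^a(T_i)$ with the first factor at most $w_a^{(p)}$, and this fails in either reading of your product formula.

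In the level-by-level (Bolthausen--Sznitman) reading, passing from depth $j+1$ to depth $j$ merges the current blocks by a Poisson--Dirichlet partition with parameter $\zeta_{j-1}^{(p)}/\zeta_j$. It is not given by ``PD$(\zeta_a)$'' factors at split vertices. The levels above $a$ then do not contribute $\prod_i\mu_{k_i}^a(T_i)$. The $b$ groups must also stay in distinct clusters down to depth $a+1$, and each level's partition probability depends on the total number of blocks of all children, so the children are coupled. Already for $k=2$, with two singleton children and $a<r$, these levels contribute $\zeta_a$, not $1$. Correspondingly, the merge at depth $a$ is conditional on $b$ distinct blocks at depth $a+1$. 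For $b=2$ its probability is $1-\zeta_{a-1}^{(p)}/\zeta_a=w_a^{(p)}/\zeta_a>w_a^{(p)}$. Identifying it with the unconditional two-replica probability $w_a^{(p)}$, which already contains the factor $\zeta_a$, is wrong.

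In a top-down reading, the split law inside a depth-$a$ cluster is the two-parameter PD$(\zeta_a,-\zeta_{a-1}^{(p)})$, not PD$(\zeta_a)$. There the split factor is indeed at most $w_a^{(p)}$. But the child subtrees are size-biased by their cluster masses, so the child factor is $\mu_{k_i}^0(T_i)$ divided by the probability that $k_i$ replicas share a depth-$(a+1)$ cluster. That exceeds $\mu_{k_i}^a(T_i)$ whenever $k_i\ge2$. Your monotonicity argument is correct only when all children are singletons (including $a=r$), where it coincides with the paper's base case.

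What is true, and what your route would have to establish, is the exact identity
\[
 \mu_k^p(T)=\frac{\prod_{i=1}^b(k_i-1)!}{(k-1)!}
   \prod_{j=1}^{b-1}\bigl(j\zeta_a-\zeta_{a-1}^{(p)}\bigr)
   \prod_{i=1}^b\mu_{k_i}^a(T_i),\qquad \zeta_r=1.
\]
From it, \eqref{eq:rootdom} follows by two bounds. First, $j\zeta_a-\zeta_{a-1}^{(p)}\le j$ for $j\ge2$. Second, $(b-1)!\prod_i(k_i-1)!\le(k-1)!$, because the ratio is a multinomial coefficient, as $k-1=(b-1)+\sum_i(k_i-1)$.

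For instance, let $k=3$, with replicas $1,2$ forming a child that splits at depth $d>a$ and $\{3\}$ a singleton child. Then $\mu_3^p(T)=w_a^{(p)}w_d/2$. The slack $1/2$ relative to $w_a^{(p)}\mu_2^a(T_1)$ comes from this combinatorial coupling of the children, and nothing in your argument accounts for it.

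The paper extracts exactly this slack with less machinery. From the GG identities it derives the one-replica pruning identity with factor $((m-1)\zeta_d-\zeta_{d-1}^{(p)})/(N-1)$. It observes that removing a replica inside child $i$ gives the same numerator for the whole shape under $\mu^p$ and for the child under $\mu^a$. It then inducts, losing only the ratio $(k_i-1)/(k-1)\le1$ of the denominators.

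Your part (a) is fine once the transitions are stated as above. For $j>p$ they coincide in the two cascades, and when $a(T)>p$ only one block reaches the remaining levels, each of which contributes the factor one.
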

The proof is in Appendix \ref{app:shapes}. Part (b) bounds the probability of the whole shape by the pair weight at its first split and the smaller child-shape probabilities, with constant one. Part (a) lets us use the original shape law at any retained depth $p$, as long as the first split is deeper than $p$.

Only the first split and its children are needed for branch replacement below.
In Section \ref{sec:compactness}, the physical overlap tree will record
all successive replica groups and let us vary their assigned depths.

\subsection{Branch replacement}\label{sec:branches}

Now we will look at the process of branch replacement. We will show that, under the induction hypothesis, the smaller branches can be averaged out, leaving the common-stem moment.

Consider a shape $T$ such that its first split is $a=a(T)\ge p$ with $b$ children, and let $D_i$, $k_i$ and $T_i$ be the child groups, sizes and shapes from \eqref{eq:childshapes}. Recall the future multioverlap $F_{T_i}^{a}(U_{\le a}, U_{T_i},Z_{\le a})$ of the child shape $T_i$ at depth $a$ in \eqref{eq:FT}, and let
\begin{equation}\label{eq:childfeatures}
 f_i=F_{T_i}^{a}(U_{\le a}, U_{T_i},Z_{\le a}),\qquad
 m_i=\E_{U_{T_i}}f_i,\qquad e_i=f_i-m_i.
\end{equation}
In \(m_i\), only the future child state variables \(U_{T_i}\) are averaged; the shared prefixes \(U_{\le a}\) and \(Z_{\le a}\) are held fixed.

The child state variables $U_{T_i}$ are independent. We can express the future multioverlap $F_T^p$ of the shape $T$ at depth $p$ in terms of its children by averaging in $Z_{(p,a]}$ in their common stem $(p,a]:=(p+1,\ldots, a)$:
\[
 F_T^p
       =\E_{Z_{(p,a]}}\prod_{i=1}^b f_i.
\]
If we replace each $f_i$ by its conditional mean $m_i$, the result is the \emph{common-stem moment}
\begin{equation}\label{eq:HT}
  H_T^p 
        =\E_{Z_{(p,a]}}\prod_{i=1}^b m_i.
\end{equation}
Note that $F_T^p$ is a function of $(U_{\le p}, U_{T},Z_{\le p})$, which is the same as $(U_{\le a}, (U_{T_i}),Z_{\le p})$, and $H_T^p$ is a function of $(U_{\le a}, Z_{\le p}).$ Consequently
\begin{equation}\label{eq:stemproperties}
 \Pi_pH_T^p=0\quad\text{if }a(T)=p .
\end{equation}
The closeness $f_i\approx m_i$ is controlled by the induction quantity $\kappa_m$ for $m<k$ and if, by induction hypothesis, these are small then indeed $f_i\approx m_i.$  The following lemma makes this statement quantitative, which will be an input to prove that this implies that $\kappa_k$ is then also small.

\begin{lemma}[Branch replacement]\label{lem:branch}
Let $k\ge2$. At every cut,
\begin{equation}\label{eq:branchbound}
 \sup_{p<r}\sum_T\mu_k^p(T)
        \|F_T^p-H_T^p\|_p^4
       \le k^4B_k\max_{1\le m<k}\kappa_m ,
\end{equation}
where $B_k$ is the number of partitions of a set of $k$ elements. The difference \(F_T^p-H_T^p\) has zero mean when all child state variables are averaged, with the common stem and retained site prefix fixed,
\[
\mathbb E_{U_{T_1},\ldots,U_{T_b}}
   [F_T^p-H_T^p\mid U_{\le a},Z_{\le p}]=0.
\]
\end{lemma}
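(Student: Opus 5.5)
The plan is to telescope the product $\prod_i f_i-\prod_i m_i$ one child at a time, bound each telescoping term in the $\|\cdot\|_p$ norm by the corresponding child covariance norm at depth $a$, and then sum over shapes using Lemma \ref{lem:product}(b).

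\textbf{The zero-mean identity.} Conditionally on $(U_{\le a},Z_{\le a})$, the child state variables $U_{T_1},\ldots,U_{T_b}$ are independent, and $f_i$ depends only on $U_{T_i}$ among them. Hence $\E_{U_{T_1},\ldots,U_{T_b}}\prod_i f_i=\prod_i m_i$ pointwise in $(U_{\le a},Z_{\le a})$. Integrating in $Z_{(p,a]}$ and using Fubini gives the stated identity.

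\textbf{The telescoping.} Write
\[
\prod_{i=1}^b f_i-\prod_{i=1}^b m_i=\sum_{j=1}^b\Bigl(\prod_{i<j}m_i\Bigr)e_j\Bigl(\prod_{i>j}f_i\Bigr),
\]
so that $F_T^p-H_T^p=\sum_j\E_{Z_{(p,a]}}[\cdots]$. All factors $f_i,m_i$ lie in $[0,1]$. The goal is to show that the $\|\cdot\|_p$-norm of the $j$-th term is at most $\|e_j\|_a=\|\Pi_aF_{T_j}^a\|_a$, where the equality holds because centering at depth $a$ in the child variables is exactly $f_j-m_j$.

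I would prove this norm bound in three moves. First, view the term as a function of $(U_{\le p},U_T,Z_{\le p})$ with the extra site prefix $W=Z_{(p,a]}$. Averaging in $W$ contracts the norm by the third property of Lemma \ref{lem:normfacts}, which reduces the problem to bounding the norm with $Z_{\le a}$ as the site variable. Second, expand $\|g\,e_j\|^4$, where $g$ is the product of the other factors. The inner $dU_T$ integral factors: the $U_{T_i}$ with $i\neq j$ and the stem variables $U_{(p,a]}$ are integrated against bounded factors. By Cauchy--Schwarz and Jensen over $U_{(p,a]}$ and the other children's variables, this should dominate
\[
\E_{U_{\le a}}\iint\Bigl|\int e_j e_j'\,dU_{T_j}\Bigr|^2\,dZ_{\le a}\,dZ'_{\le a}=\|e_j\|_a^4.
\]
I expect this step to be the main obstacle. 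The factor $g$ depends on $Z$ and $Z'$ differently, so it cannot simply be pulled out. I would handle it by bounding
\[
\Bigl|\int g\,g'\,e_je_j'\,dU_T\Bigr|\le\int|g\,g'|\,\Bigl|\int e_je_j'\,dU_{T_j}\Bigr|\,d(\text{other }U),
\]
which uses the independence of $U_{T_j}$ from the other variables, and then squaring and applying Jensen with $|g|\le1$. With $b\le k$ terms, the power-mean inequality gives
\[
\|F_T^p-H_T^p\|_p^4\le k^3\sum_j\|\Pi_aF^a_{T_j}\|_a^4.
\]

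\textbf{Summing over shapes.} Multiply by $\mu_k^p(T)$ and apply Lemma \ref{lem:product}(b):
\[
\mu_k^p(T)\le w_a^{(p)}\prod_i\mu^a_{k_i}(T_i)\le w_a^{(p)}\mu^a_{k_j}(T_j).
\]
Organize the sum over $T$ by the first split $a$, the partition $\{D_i\}$, and the child shape $T_j$, summing out the remaining child shapes with total mass at most one. For fixed $a$ and partition, $\sum_{T_j}\mu^a_{k_j}(T_j)\|\Pi_aF^a_{T_j}\|_a^4\le\kappa_{k_j}$; singletons at $a=r$ contribute zero. Since $\sum_a w_a^{(p)}=1$, at most $B_k$ partitions occur, and there are at most $k$ choices of $j$, the total is at most $k^4B_k\max_{m<k}\kappa_m$. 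The same bound holds for every $p$, which gives \eqref{eq:branchbound}.
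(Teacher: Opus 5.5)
Your proof is correct and follows essentially the same route as the paper. You telescope one child at a time and bound each term's norm by the child's covariance norm at depth $a$. Your site-prefix contraction in $Z_{(p,a]}$, followed by Jensen over the stem and sibling state variables with $|gg'|\le1$, is the paper's single Jensen step over $(U,Z,Z')$ against the kernel $\mathcal K$, split in two. You then sum over shapes with Lemma~\ref{lem:product}(b), as the paper does.

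One small slip: in the shape sum, keep the factors $\prod_{i\ne j}\mu^a_{k_i}(T_i)$ instead of dropping them to get $\mu_k^p(T)\le w_a^{(p)}\mu^a_{k_j}(T_j)$. These factors are what let you sum out the other child shapes with total mass at most one, uniformly in the number of levels $r$.
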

\begin{proof}
Let us simplify notation for state and site variables:
\begin{align*}
&u=U_{\leq p},\quad U=U_{(p,a]},\quad U_i=U_{T_i};\\
& z=Z_{\le p},\quad Z=Z_{(p,a]}.
\end{align*}
All functions depend on $u=U_{\leq p}$, but we will suppress this dependence until the very end when we integrate over it. Then,
$$
f_i = f_i(U,U_i;z,Z),\qquad m_i = m_i(U;z,Z)=\E_{U_i} f_i,\qquad e_i = f_i-m_i.
$$
Let us telescope the difference replacing one child at a time,
$$ 
F_T^p -  H_T^p = \E_Z \Bigl(\prod_{i=1}^b f_i - \prod_{i=1}^b m_i\Bigr)
=\sum_{i\leq b} R_i.
$$
One term of the telescoping sum has the form
\[
 R_i(U,U_1,\ldots,U_b;z)
     =\E_Z\, e_i(U,U_i;z,Z)K(U,U_{-i};z,Z),\qquad |K|\le1,
\]
where $K$ is a product of some $f_j$ and some $m_j$, $j\ne i$, and $U_{-i}=(U_j)_{j\neq i}$. For every fixed $U,U_{-i},z$ we have $\E_{U_i}R_i=0$, because $\E_{U_i}e_i=0$ and $K$ does not depend on $U_i$. Hence $R_i$ is centered in the state coordinates $U_T = (U,(U_i)_{1\leq i\leq b})$ of the shape $T$ above depth $p$, so centering $\Pi_p R_i=R_i,$ and the covariance kernel defined in \eqref{eq:CUZZT} in the case of $R_i$ is:
\begin{align}
&C_R(z,z'):= \int
           R_i(u,U_T,z)
           R_i(u,U_T,z')\,dU_T,
\end{align}
For two independent site prefixes $z,z'$ define
\begin{align*}
 C_i(U;z,Z;z',Z')
      &=\E_{U_i}e_i(U,U_i;z,Z)e_i(U,U_i;z',Z'),\\
 \mathcal K(U;z,Z;z',Z')
      &=\E_{U_{-i}}K(U,U_{-i};z,Z)K(U,U_{-i};z',Z').
\end{align*}
Then $|\mathcal K|\le1$, and since $U_i$ and $U_{-i}$ are independent given $U$, the covariance kernel of $R_i$ (at depth $p$) is exactly
\[
 C_R(z,z')
   =\E_{U,Z,Z'}\, C_i(U;z,Z;z',Z')\,\mathcal K(U;z,Z;z',Z').
\]
The state stem $U$ is shared by the two factors, while the site stems $Z,Z'$ are independent because they come from two separate site averages in $R_i(z)R_i(z')$. Jensen's inequality gives
\[
 |C_R(z,z')|^2
     \le\E_{U,Z,Z'}|C_i(U;z,Z;z',Z')|^2 .
\]
Recalling that everything depends on $u=U_{\leq p}$, and integrating over $u,z,z'$ turns $(u,U)$ into the original law of $U_{\le a}$ and the concatenations $Z_{\leq a}=(z,Z), Z_{\leq a}'=(z',Z')$ into two independent complete site prefixes through level $a$. Thus
\begin{equation}\label{eq:onebranch}
 \|\Pi_p R_i\|_p^4= \|R_i\|_p^4
       \le\E_{U_{\le a},Z_{\le a},Z'_{\le a}}|C_i|^2
       =\|\Pi_aF_{T_i}^{a}\|_a^4 .
\end{equation}
This is the key estimate: the right side is the covariance norm of the smaller child moment at retained depth $a$, the depth at which it separated from the other children. No length of the stem appears.

For a fixed shape, the triangle inequality for the norm and $(\sum_{i\le b}x_i)^4\le b^3\sum_ix_i^4$ give that
$\|F_T^p-H_T^p\|_p^4\le b^3\sum_i\|R_i\|_p^4$. 
Thus the left hand side of \eqref{eq:branchbound} can be bounded by 
$$
\sum_T\mu_k^p(T) \|F_T^p-H_T^p\|_p^4
\leq
\sum_T\mu_k^p(T) b^3\sum_{i\leq b} \|\Pi_aF_{T_i}^{a}\|_a^4,
$$ 
where $b,a$ and $T_i$ depend on $T$. We can first consider shapes $T$ with a fixed labeled partition of the $k$ replicas into $b$ children of sizes $k_1,\ldots, k_b$, and bound $\sum_T\mu_k^p(T) \|\Pi_aF_{T_i}^{a}\|_a^4$ over such shapes for each $i\leq b.$ We can sum this over the first split $a$ and the child shapes using \eqref{eq:rootdom}:
\begin{align*}
 &\sum_{a=p}^{r-1} w_a^{(p)}
    \sum_{T_1,\ldots,T_b}
       \Bigl(\prod_{j=1}^b\mu_{k_j}^{a}(T_j)\Bigr)
            \|\Pi_aF_{T_i}^{a}\|_a^4
 \le\sum_{a=p}^{r-1}w_a^{(p)}\sum_{T_i}\mu_{k_i}^a(T_i)\|\Pi_aF_{T_i}^a\|_a^4
 \le\sum_{a=p}^{r-1}w_a^{(p)}\kappa_{k_i}\le\kappa_{k_i}
\end{align*}
(for a genuine child, its first split must exceed \(a\), unless it is a singleton; in the upper bound above, we enlarge each child-shape sum to all shapes of the auxiliary cascade rooted at \(a\), which adds only nonnegative terms). At \(a=r\), there are no state variables beyond the first split, so \(f_i=m_i\) and all branch replacement errors vanish. Summing over the $b\le k$ children gives $b^4\max_{m<k}\kappa_m\le k^4\max_{m<k}\kappa_m$ for each labeled partition, and there are at most $B_k$ labeled partitions.
\end{proof}

\subsection{What remains to be proved}\label{sec:target}
The replacement leaves only the state variables on the common path through the first split. If the first split is already at $p$, there is no future common stem and $\Pi_pH_T^p=0$. We regard these common-stem moments as functions of the same canonical state path, using its prefix through \(a(T)\) for each shape.
To complete the induction it remains to show
\begin{equation}\label{eq:inductiontarget}
 \sup_{p<r}\sum_T\mu_k^p(T)\|\Pi_pH_T^p\|_p^4\longrightarrow0
 \quad\text{assuming }\kappa_m\longrightarrow0\ (m<k).
\end{equation}
Indeed, the triangle inequality and Lemma \ref{lem:branch} will give
\[
 \kappa_k\le
 8\sup_{p<r}\sum_T\mu_k^p(T)\|\Pi_pH_T^p\|_p^4
       +8k^4B_k\max_{m<k}\kappa_m
       \longrightarrow0.
\]
The cavity equations will first give \emph{tested averages} of these centered moments: for every continuous permutation-invariant function $\psi$ of the quantized overlap matrix, $\sum_T\mu_k^p(T)\psi(q(T))\Pi_pH_T^p$ is small in norm (Proposition \ref{prop:tests}). At a fixed finite depth, these tests separate the finitely many shapes up to permutation of the replica labels; this suffices because the common-stem moments agree under such relabelings. When the depth grows, the number of shapes grows too, and one needs the compactness argument of Section \ref{sec:compactness} to pass from tested averages to \eqref{eq:inductiontarget}.

\section{Cavity equations for cluster barycenters}\label{sec:marking}

In this section, we will write the cavity equations for the cluster barycenters and transfer the geometric covariance bound of Section \ref{sec:geometry} to two cavity sites. The argument follows \cite[Theorem 6]{rRSB}. The difference is that a terminal cluster now contains a small neighborhood of states, so replacing a sampled state by its barycenter introduces an error of order $\sqrt\eta$. We will keep track of this error before removing the average over cluster labels.

\paragraph{Inserting overlap tests.} We first record the cavity equation \eqref{eq:SC} with a bounded function of the overlap matrix inserted. Define
\[
 G^\Lambda(dm)=\frac{\Lambda(m)}{\cZ}G(dm),\qquad
 \widetilde m_i(m)=
 \begin{cases}\xi_i(m),&i\le s,\\m(v_i),&i>s.\end{cases}
\]
For any bounded Borel function $\psi$ of the $n$-replica overlap matrix,
\begin{align}
 &\E\int\psi((m^\ell\cdot m^{\ell'})_{\ell,\ell'\le n})
       \prod_{\ell=1}^n\prod_{i\in C_\ell}m^\ell(v_i)
                        \,G^{\otimes n}(d\boldsymbol m)\notag\\
 &=\E\int\psi((m^\ell\cdot m^{\ell'})_{\ell,\ell'\le n})
       \prod_{\ell=1}^n\prod_{i\in C_\ell}\widetilde m_i(m^\ell)
                         \, (G^\Lambda)^{\otimes n}(d\boldsymbol m).
                                                        \label{eq:SCoverlap}
\end{align}
This follows from \eqref{eq:SC}: add independent noncavity sites; products of spins at these sites give every polynomial in the off-diagonal overlaps, and the diagonal is $q_*$; polynomial approximation gives continuous tests on the compact matrix cube; equality of the resulting finite signed measures gives all bounded Borel tests. Equivalently, the empirical overlaps computed from the added noncavity sites are the same on both sides. Note that the test $\psi$ on the right side refers to the overlaps of the original states, that is to the cluster labels of the states, and not to overlaps computed from the cavity outputs.

\subsection{The cavity fields of the cluster barycenters}
Fix a cut. Recall from \eqref{eq:barycenter} that $C_\alpha$ is a terminal cluster (an approximate pure state), and 
$$
V_\alpha=G(C_\alpha), \qquad
G_\alpha=G(\,\cdot\mid C_\alpha),\qquad
 b_\alpha=\int m\,G_\alpha(dm).
 $$ 
 We want to write the cavity equations \eqref{eq:SC} in terms of these weights and barycenters. For this, write $G=\sum_\alpha V_\alpha G_\alpha$ on both sides of the cavity equation and replace a state sampled from $G_\alpha$ by $b_\alpha$. This replacement is made both in the observed spin products and in every input $m(v_I)$ of the cavity fields. It is controlled by
\[
 \iint |m(v)-b_\alpha(v)|\,dv\,G_\alpha(dm)
       \le\sqrt\eta \qquad\text{for every }\alpha,
\]
from \eqref{eq:baryL1}. The resulting cavity equation will therefore be approximate; we will display it below and prove its error bound in Section \ref{sec:transfer}.

To write this equation, first evaluate the cavity fields \eqref{eq:cavityfield} at the represented barycenters \eqref{eq:he}. For $s$ added cavity sites, set
\begin{align}
 A_i^\alpha(\varsigma)
  ={}&\sum_{a=1}^{N_{i,0}}\theta_{i,a}
       (b_\alpha^{i,0,a,1},\ldots,b_\alpha^{i,0,a,K-1},\varsigma)
       +g_i\frac{1+\varsigma}{2}\notag\\
 &+\sum_{d\ge2}\sum_{a=1}^{N_{i,d}}\theta_{i,d,a}^{\epsilon}
       (b_\alpha^{i,d,a,1},\ldots,b_\alpha^{i,d,a,d-1},\varsigma),
       \qquad i\le s.\label{eq:barycavityfield}
\end{align}
Here $N_{i,0}\sim\operatorname{Pois}(\lambda K)$ and $N_{i,d}\sim\operatorname{Pois}(d)$, as before. The tuple $I=(i,d,a,j)$ is  a site index, and
\[
 b_\alpha^I
   =h\bigl((U_\beta)_{\beta\in p(\alpha)},
           (Z_\beta^I)_{\beta\in p(\alpha)}\bigr).
\]
The arrays of site variables $(Z_\beta^I)_\beta$ are independent for different $I$, while the state variables $U_\beta$ are shared by all sites. Let $\mathcal I_i$ be the set of site indices $I$ that appear in the cavity field $A_i^\alpha(\varsigma)$ of the cavity site $i$, and 
\begin{equation}\label{eq:cavIs}
\mathcal I=\bigcup_{i\le s}\mathcal I_i.
\end{equation}
Define the cavity magnetizations and multipliers as in \eqref{eq:outputs} by
\begin{equation}\label{eq:barycavityoutputs}
 A_i^\alpha=\log\Av_\varsigma e^{A_i^\alpha(\varsigma)},\qquad
 \xi_i^\alpha=\frac{\Av_\varsigma\varsigma e^{A_i^\alpha(\varsigma)}}{e^{A_i^\alpha}},
 \qquad \Lambda_\alpha=e^{\sum_{i=1}^s A_i^\alpha}.
\end{equation}
Thus $\xi_i^\alpha$ is the magnetization of the new cavity spin when the clause arguments are taken from $b_\alpha$. In the right side of the cavity equation, replacing the states by their barycenters changes the normalizing factor $\langle\Lambda(m)\rangle$ to $\sum_\beta V_\beta\Lambda_\beta$. The normalized weights appearing there are consequently
\begin{equation}\label{eq:reweighted}
 \widetilde V_\alpha
      =\frac{V_\alpha\Lambda_\alpha}{\sum_\beta V_\beta\Lambda_\beta}.
\end{equation}
For $n$ replicas and sets of sites $J_\ell\subseteq\{1,\ldots,s\}$ for $\ell\leq n$, the approximate cavity equation we will prove is
\begin{align}
 &\Biggl|
 \E\prod_{\ell=1}^n
       \left(\sum_\alpha V_\alpha\prod_{i\in J_\ell}b_\alpha^i\right)
-
 \E\prod_{\ell=1}^n
       \left(\sum_\alpha\widetilde V_\alpha
                         \prod_{i\in J_\ell}\xi_i^\alpha\right)
 \Biggr|
       \le C_{n,s}\sqrt\eta .
       \label{eq:barycavityequation}
\end{align}
The finite constant $C_{n,s}$ depends on $n,s$ and the clause laws, but not on the cut; its value is given in \eqref{eq:Cns}. The expectations include all the cavity disorder and uniform variables, as in the original cavity equation. If $\eta=0$, the clusters are pure states and \eqref{eq:barycavityequation} is the exact identity in \cite[equation (82)]{rRSB}. On the left, the weights $V$ are independent of the barycenter array. On the right, $\widetilde V$ and $\xi$ depend on the same cavity fields. The RPC change of density property can be used to express this right-hand side using an RPC weight array independent of a changed array of cavity magnetizations. We define that array next.

\subsection{The hierarchical change of density}\label{sec:changeofmeasure}
At a vertex $\beta$, denote all the state and site variables appearing in the $s$ cavity fields by the tuple below, with $\mathcal I$ as in \eqref{eq:cavIs}:
\[
 \omega_\beta=\bigl(U_\beta,(Z_\beta^I)_{I\in\mathcal I}\bigr).
\]
Let $\mathcal F_{\mathrm{cav}}$ be the sigma-algebra generated by the Poisson counts, clause disorder and unary fields of these cavity sites:
\begin{equation}
 \mathcal F_{\mathrm{cav}}
 =\sigma\bigl(N_{i,0},N_{i,d},\theta_{i,a},
                 \theta_{i,d,a}^{\epsilon},g_i:
                 i\le s,\ d\ge2,\ a\ge1\bigr).
 \label{eq:Fcav}
\end{equation}
It contains none of the uniform state or site variables. We carry out the RPC construction conditionally on
$$
 \mathcal F=\mathcal F_{\mathrm{cav}}\vee\sigma(\omega_*),
$$
so both the cavity disorder and the root variables are fixed. Along a path $\beta_0=*,\ldots,\beta_r=\alpha$, abbreviate $\omega_{\beta_j}$ by $\omega_j$ and set
\[
 X_r(\omega_0,\ldots,\omega_r)
       =\log\Lambda_\alpha=\sum_{i=1}^s A_i^\alpha.
\]
Recursively, from the leaf to the root let us define
\begin{equation}\label{eq:Xrec}
 X_j=\frac1{\zeta_j}\log\E_{\omega_{j+1}}e^{\zeta_jX_{j+1}},
 \qquad
 \rho_j=e^{\zeta_j(X_{j+1}-X_j)},\qquad 0\le j<r.
\end{equation}
The expectation $\E_{\omega_{j+1}}$ integrates the original
product uniform law, with the cavity disorder and
$\omega_0,\ldots,\omega_j$ fixed. Since
$\E_{\omega_{j+1}}\rho_j=1$, the function $\rho_j$ is a
conditional density for the variable at depth $j+1$
relative to this law.

Conditionally on $ \mathcal F,$ generate a new array
\[
\widetilde \omega_\beta=\bigl(\widetilde U_\beta,(\widetilde Z_\beta^I)_{I\in\mathcal I}\bigr)
\]
down the tree from the root to the leaves by using \(\rho_j((\widetilde\omega_\gamma)_{\gamma\prec\beta},\,\cdot\,)\) as the  density of $\widetilde \omega_\beta$ for $\beta$ at depth $j+1$ conditionally on $(\widetilde \omega_\gamma)_{\gamma \prec \beta}$, independently for different children given their ancestors. The root retains its original value. Let $\widetilde\xi_i^\alpha$ be the function in \eqref{eq:barycavityoutputs} evaluated on this new array. Although the coordinates of $\omega_\beta$ were independent, those of $\widetilde\omega_\beta$ need not be.

We use the hierarchical version of the RPC invariance property of Bolthausen and Sznitman \cite{Bolthausen}; see \cite[Theorem 4.4]{SKbook}. In the sorted-weight formulation of \cite[Theorem 5]{rRSB}, there is a random tree relabeling $\pi$ preserving ancestors such that, conditionally on $\mathcal F$,
\[
 \bigl((\widetilde V_{\pi(\alpha)})_\alpha,
       (\xi_i^{\pi(\alpha)})_{i,\alpha}\bigr)
 \stackrel{d}{=}
 \bigl((V_\alpha)_\alpha,(\widetilde\xi_i^\alpha)_{i,\alpha}\bigr)
\]
and the two arrays on the right are independent, also given \(\mathcal F\). As a result, the second term in \eqref{eq:barycavityequation} can be rewritten as
\begin{equation}\label{eq:barycavityequation2}
 \E\prod_{\ell=1}^n
       \left(\sum_\alpha\widetilde V_\alpha
                         \prod_{i\in J_\ell}\xi_i^\alpha\right)
  =
   \E\prod_{\ell=1}^n
       \left(\sum_\alpha V_\alpha
                         \prod_{i\in J_\ell}\widetilde\xi_i^\alpha\right).                       
\end{equation}
For later calculations, the conditional density of a single path segment is
\begin{equation}\label{eq:Rsegment}
 \rho_{p:q}=\prod_{j=p}^{q-1}\rho_j,\qquad
 \E^X_{p\to q}f
   =\int f\,\rho_{p:q}\,d\omega_{p+1}\cdots d\omega_q,
       \qquad p\le q.
\end{equation}
The prefix through $p$ is held fixed; $\rho_{p:p}=1$. We will write $\Cov^X_{p\to q}$ for covariance under this probability law.

\subsection{The cavity identity at a prescribed shape}\label{sec:transfer}

Our next goal is to prove the approximate cavity equation \eqref{eq:barycavityequation} and obtain a version for prescribed cluster labels, without averaging over $\alpha$. For the latter, we start from the exact equation \eqref{eq:SCoverlap} with an overlap test restricting the label tuples to a fixed shape. We then make the barycenter replacement and use the change of density as in \eqref{eq:barycavityequation2}. Independence will let us factor out the shape probability.

For $n$ replicas, choose sets of cavity sites $J_\ell\subseteq\{1,\ldots,s\}$ and a fixed tuple of labels $\boldsymbol\alpha$ of shape $T$. The moment we want to transfer is
\begin{equation}\label{eq:testedmonomial}
 \mathcal O_T(x)
     =\prod_{\ell=1}^n\prod_{i\in J_\ell}x_i^{\alpha_\ell}.
\end{equation}
We compare $\E\mathcal O_T(b)$ with $\E\mathcal O_T(\widetilde\xi)$. Both expectations include all uniform variables; the second also includes the cavity disorder. 

\paragraph{Bounds for replacing a state by its barycenter.}\label{sec:integrability}
To estimate the replacement in \eqref{eq:barycavityfield}, regard its arguments as a list $x=(x_I)_{I\in\mathcal I}$ of numbers in $[-1,1]$, where $\mathcal I$ was defined in \eqref{eq:cavIs}. For a clause $c$ in any of the $s$ cavity fields, let $d_c$ be its arity and $J_c$ its sup norm over $\{-1,1\}^{d_c}$. The following random bounds are $\mathcal F_{\mathrm{cav}}$-measurable:
\[
 \mathsf B=\sum_{i=1}^s|g_i|+\sum_cJ_c,\qquad
 \mathsf L=\sum_c(d_c-1)\frac{e^{2J_c}-1}{2},\qquad
 \mathsf K=\max_{i\le s}\sum_{c\text{ at }i}(d_c-1)\frac{e^{2J_c}-1}{2}.
\]
Here $\mathsf B$ bounds $|\log\Lambda|$ as well as each $|A_i(m,\varsigma)|$ in \eqref{eq:cavityfield}, $\mathsf L$ bounds the sum of the coordinate Lipschitz constants of $\log\Lambda$, and $\mathsf K\le \mathsf L$ does the same for any $\xi_i$. More explicitly, for suitable nonnegative $a_I,c_{iI}$,
\begin{align}
 |\log\Lambda(x)|&\le \mathsf B,\notag\\
 |\log\Lambda(x)-\log\Lambda(y)|
     &\le\sum_Ia_I|x_I-y_I|,\qquad \sum_Ia_I=\mathsf L,\label{eq:Lipschitz}\\
 |\xi_i(x)-\xi_i(y)|
     &\le\sum_Ic_{iI}|x_I-y_I|,\qquad
                         \sum_Ic_{iI}\le \mathsf K.\notag
\end{align}
Indeed, differentiating the affine Boltzmann interpolation \eqref{eq:extension} gives $|\partial_{x_I}\theta|\le(e^{2J_c}-1)/2$ for each input of clause $c$. The derivative of a log sum of exponentials is a weighted average of these derivatives. For the output, use
\[
 \xi_i(x)=\tanh\frac{A_i(x,1)-A_i(x,-1)}2,
 \qquad
 |\partial_{x_I}\xi_i|
   \le\frac{|\partial_{x_I}A_i(x,1)|+|\partial_{x_I}A_i(x,-1)|}{2}.
\]
The constant in the transfer estimate will be
\begin{equation}\label{eq:Cns}
 C_{n,s}=\E\left[
 ns+n e^{2n\mathsf B}
       \{s\mathsf K+\mathsf L(1+e^{2\mathsf B})\}\right].
\end{equation}
We assume it is finite for the $n,s$ under consideration; the application below requires only $C_{4,2}$. We verify finiteness for our models after the proof. The following is the analogue of  \cite[Theorem 6]{rRSB}.

\begin{proposition}[Shape-uniform cavity transfer]\label{prop:transfer}
For every monomial \eqref{eq:testedmonomial} at a shape $T$ of positive probability,
\begin{equation}\label{eq:transfer}
 \left|\E\mathcal O_T(b)-\E\mathcal O_T(\widetilde\xi)\right|
                    \le C_{n,s}\sqrt\eta.
\end{equation}
The constant is independent of the shape probability, the cascade parameters and the depth.
\end{proposition}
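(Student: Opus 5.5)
We start from the exact tested cavity equation \eqref{eq:SCoverlap}, choosing the overlap test $\psi$ to be the indicator that the $n$ sampled states have cluster-label shape $T$, i.e. that the last-common-ancestor depths of their terminal clusters coincide with $T$. Because the clusters are defined by overlap thresholds $t_j$, this is a Borel function of the overlap matrix. Decomposing $G=\sum_\alpha V_\alpha G_\alpha$ on both sides turns the left side of \eqref{eq:SCoverlap} into $\E\sum_{\boldsymbol\alpha:T(\boldsymbol\alpha)=T}V_{\alpha_1}\cdots V_{\alpha_n}\prod_\ell\int\prod_{i\in J_\ell}m(v_i)\,G_{\alpha_\ell}(dm)$. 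The right side becomes the same sum with $\widetilde V$-type weights $V_\alpha\int\Lambda\,dG_\alpha/\langle\Lambda\rangle$ and the cavity outputs $\xi_i(m)$ integrated against $\Lambda\,dG_\alpha$. The indicator test on the right still refers to the original states, which is what makes the shape restriction compatible with reweighting.

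\textbf{Step 1: barycenter replacement on both sides.} On the left, we replace each state by $b_\alpha$ inside the spin product. Since every factor is bounded by one, the error per replica is at most $|J_\ell|$ times the $L^1$ bound \eqref{eq:baryL1}, giving a total of at most $ns\sqrt\eta$. On the right, we replace $m$ by $b_\alpha$ in every input of every cavity field, using the Lipschitz bounds \eqref{eq:Lipschitz}.

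For $\xi_i$ the cost is $\sum_I c_{iI}|m(v_I)-b_\alpha(v_I)|$, whose expectation is at most $\mathsf K\sqrt\eta$. For the multiplier we use $|e^{x}-e^{y}|\le e^{\mathsf B}|x-y|$, so that $|\Lambda(m)-\Lambda_\alpha|\le e^{\mathsf B}\mathsf L\sqrt\eta$ in expectation. Since $e^{-\mathsf B}\le\Lambda\le e^{\mathsf B}$, the normalization $\langle\Lambda\rangle$ versus $\sum_\beta V_\beta\Lambda_\beta$ changes the ratio by a relative error of order $e^{2\mathsf B}\mathsf L\sqrt\eta$.

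For the $n$-fold product of normalized measures we telescope over replicas. Each replica's reweighted average is a ratio with denominator at least $e^{-\mathsf B}$, so its Lipschitz factor accumulates into $e^{2n\mathsf B}$. Collecting terms gives exactly the shape of $C_{n,s}$ in \eqref{eq:Cns}. All of these bounds are conditional on $\mathcal F_{\mathrm{cav}}$ and are taken before averaging over labels, using that \eqref{eq:baryL1} holds for every $\alpha$. This uniformity is what makes the constant independent of the shape, the depth and the cascade parameters.

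\textbf{Step 2: change of density.} At this point the right side reads $\E\sum_{T(\boldsymbol\alpha)=T}\prod_\ell\widetilde V_{\alpha_\ell}\prod_{i\in J_\ell}\xi_i^{\alpha_\ell}$. We apply the Bolthausen--Sznitman invariance, in the form giving \eqref{eq:barycavityequation2}, conditionally on $\mathcal F$. The relabeling $\pi$ preserves ancestry, hence preserves shapes, so the restriction to shape $T$ passes through. The result is $\E\sum_{T(\boldsymbol\alpha)=T}\prod V_{\alpha_\ell}\,\mathcal O_T(\widetilde\xi)$, where $\mathcal O_T$ is evaluated at the tuple $\boldsymbol\alpha$ and the array $\widetilde\xi$ is independent of $V$.

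Similarly, on the left the weights are independent of the barycenter array by Lemma \ref{lem:A3}. In both expressions the law of the array evaluated at a tuple of shape $T$ does not depend on which tuple is chosen. Each side therefore factors as $\mu_n^0(T)$ times $\E\mathcal O_T(\cdot)$.

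\textbf{Step 3: dividing out the shape probability.} Dividing by $\mu_n^0(T)>0$ would destroy uniformity, since the Step 1 errors are absolute. The fix is to localize Step 1 before summation. Each replacement error is bounded by a sum over $\boldsymbol\alpha$ of $\prod V_{\alpha_\ell}$, restricted to shape $T$, times a per-cluster error of size $\sqrt\eta$. On the right side the weights are $\widetilde V$ and their $\Lambda$-tilted versions, which again sum, after the change of density, to $\mu_n^0(T)$ times a per-cluster bound. We therefore bound every error term by $\mu_n^0(T)\,C_{n,s}\sqrt\eta$ rather than by $C_{n,s}\sqrt\eta$, after which division by $\mu_n^0(T)$ yields \eqref{eq:transfer}.

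\textbf{Main obstacle.} The main obstacle is carrying out this localized bookkeeping on the reweighted side. The normalizations couple all clusters, and replacing states by barycenters in $\langle\Lambda\rangle$ changes the weights of every label. We plan to handle this by comparing the two reweighted measures replica by replica, using the pointwise bound $|\log\Lambda-\log\Lambda_\alpha|$ inside each cluster. We then use the ratio bounds $e^{\pm2\mathsf B}$ so that every error term stays proportional to the shape-restricted $\widetilde V$-mass. That mass is then converted to $\mu_n^0(T)$ through the same invariance argument as in Step 2.
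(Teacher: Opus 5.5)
Your proposal follows the paper's proof: the shape-indicator test in \eqref{eq:SCoverlap}, barycenter replacement tuple by tuple via \eqref{eq:baryL1} and \eqref{eq:Lipschitz}, the shape-preserving relabeling $\pi$, and cancellation of $p_T$ once every error carries the shape-restricted mass. The paper removes the obstacle you flag by keeping the original weights $\prod_\ell V_{\alpha_\ell}$ outside the integral and the bounded density $w=\prod_\ell\Lambda(m^\ell)/\cZ^n$ inside the $G_{\boldsymbol\alpha}$-integral, so the per-tuple error multiplies $\prod_\ell V_{\alpha_\ell}$ (this includes the $\log\cZ$ versus $\log\overline{\cZ}$ term, which is at most $e^{2\mathsf B}\mathsf L\sqrt\eta$ for every realization), and after summing and averaging it becomes a multiple of $p_T\sqrt\eta$ by independence of $G$ from the cavity disorder; the invariance is not needed for the error terms, and it would not by itself decouple $\prod_\ell\widetilde V_{\alpha_\ell}$ from replacement errors that depend on the same site variables.
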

\begin{proof}
We begin with the exact cavity equation, before replacing
the sampled states by their barycenters. Thus $\Lambda(m)$
and $\xi_i(m)$ are evaluated at a state $m$, whereas
$\Lambda_\alpha=\Lambda(b_\alpha)$ and
$\xi_i^\alpha=\xi_i(b_\alpha)$ are their barycenter versions.
Write $\mathcal A_T=\{\boldsymbol\alpha:T(\boldsymbol\alpha)=T\}$ and
\[
 p_T(V)=\sum_{\boldsymbol\alpha\in\mathcal A_T}
                      \prod_{\ell=1}^nV_{\alpha_\ell},
 \qquad p_T=\E p_T(V)>0.
\]
For states $\boldsymbol m=(m^1,\ldots,m^n)$, put
\[
 O(\boldsymbol m)=\prod_{\ell=1}^n\prod_{i\in J_\ell}m^\ell(v_i),
 \qquad
 O_\xi(\boldsymbol m)=\prod_{\ell=1}^n\prod_{i\in J_\ell}\xi_i(m^\ell),
 \qquad
 G_{\boldsymbol\alpha}=\bigotimes_{\ell=1}^nG_{\alpha_\ell}.
\]
The shape test specifies the bins containing the overlaps of the sampled states. Thus the cavity equation in the form \eqref{eq:SCoverlap} is exactly
\begin{align}
 &\E\sum_{\boldsymbol\alpha\in\mathcal A_T}
       \Bigl(\prod_{\ell=1}^nV_{\alpha_\ell}\Bigr)
                  \int O(\boldsymbol m)\,G_{\boldsymbol\alpha}(d\boldsymbol m)
       \notag\\
 &=\E\sum_{\boldsymbol\alpha\in\mathcal A_T}
       \Bigl(\prod_{\ell=1}^nV_{\alpha_\ell}\Bigr)
       \int\frac{\prod_{\ell=1}^n\Lambda(m^\ell)}{\cZ^n}\,
                       O_\xi(\boldsymbol m)\,G_{\boldsymbol\alpha}(d\boldsymbol m),
       \label{eq:clusterSC}
\end{align}
where $\cZ=\sum_\alpha V_\alpha\int\Lambda(m)G_\alpha(dm)$. The auxiliary noncavity sites used to obtain this test have already been averaged. No approximation has yet been made. In particular, the states are sampled independently from their respective $G_{\alpha_\ell}$ even when some labels repeat.

On the left, replace $m^\ell(v_i)$ by $b_{\alpha_\ell}(v_i)$. Since every factor is bounded by one, the product error is bounded by the sum of the at most $ns$ factor errors. By \eqref{eq:baryL1}, its site and state average is at most $ns\sqrt\eta$ for every label tuple. The total error is therefore at most $ns\sqrt\eta\,p_T$.

On the right, both the outputs and the reweighting must be replaced. Define
\[
 \overline{\cZ}=\sum_\alpha V_\alpha\Lambda_\alpha,\qquad
 w=\frac{\prod_\ell\Lambda(m^\ell)}{\cZ^n},\qquad
 \bar w=\frac{\prod_\ell\Lambda_{\alpha_\ell}}{\overline{\cZ}^{n}}.
\]
Fix $G$, the cavity disorder and $\boldsymbol\alpha$. In the next estimates $\E_{v,\boldsymbol m}$ averages the site coordinates and $G_{\boldsymbol\alpha}$, with these fixed. By \eqref{eq:baryL1} and \eqref{eq:Lipschitz},
\begin{align*}
 \E_{v,\boldsymbol m}|\log\Lambda(m^\ell)-\log\Lambda_{\alpha_\ell}|
      &\le \mathsf L\sqrt\eta,\\
 \E_v|\cZ-\overline{\cZ}|&\le e^{\mathsf B}\mathsf L\sqrt\eta,\\
 \E_v|\log\cZ-\log\overline{\cZ}|&\le e^{2\mathsf B}\mathsf L\sqrt\eta.
\end{align*}
The last line uses $\cZ,\overline{\cZ}\ge e^{-\mathsf B}$. Consequently
\[
 w,\bar w\le e^{2n\mathsf B},\qquad
 \E_{v,\boldsymbol m}|\log w-\log\bar w|
     \le n\mathsf L(1+e^{2\mathsf B})\sqrt\eta.
\]
For the output product, the same telescoping gives
\[
 \E_{v,\boldsymbol m}
   |O_\xi(\boldsymbol m)-O_\xi(\boldsymbol b)|
     \le ns\mathsf K\sqrt\eta,
 \qquad \boldsymbol b=(b_{\alpha_1},\ldots,b_{\alpha_n}).
\]
Use $|w-\bar w|\le e^{2n\mathsf B}|\log w-\log\bar w|$ and $|O_\xi|\le1$. For each label tuple, the error in the weighted product is at most
\[
 n e^{2n\mathsf B}
       \{s\mathsf K+\mathsf L(1+e^{2\mathsf B})\}\sqrt\eta.
\]
Summing the constrained weights and averaging the cavity disorder therefore turns \eqref{eq:clusterSC} into
\begin{align*}
 \left|\E\sum_{\boldsymbol\alpha\in\mathcal A_T}
       \Bigl(\prod_\ell V_{\alpha_\ell}\Bigr)\mathcal O_{\boldsymbol\alpha}(b)
 -\E\sum_{\boldsymbol\alpha\in\mathcal A_T}
       \Bigl(\prod_\ell\widetilde V_{\alpha_\ell}\Bigr)
                       \mathcal O_{\boldsymbol\alpha}(\xi)\right|
 \le C_{n,s}\sqrt\eta\,p_T.
\end{align*}
Here $\mathcal O_{\boldsymbol\alpha}$ denotes the product in \eqref{eq:testedmonomial} with the displayed labels. Summing this estimate over all shapes gives \eqref{eq:barycavityequation}, since $\sum_Tp_T=1$. We now remove the weights for a fixed shape.

By Lemma \ref{lem:A3}, $V$ is independent of the original barycenter array, whose moments depend on $\boldsymbol\alpha$ only through $T$. For the other term, use the relabeling $\pi$ from Section \ref{sec:changeofmeasure}; it preserves $\mathcal A_T$ and makes the changed weights independent of the changed array. Thus the two sums are, respectively,
\[
 p_T\,\E\mathcal O_T(b)
 \quad\text{and}\quad
 \E\sum_{\boldsymbol\alpha\in\mathcal A_T}
       \Bigl(\prod_\ell V_{\alpha_\ell}\Bigr)
                   \mathcal O_{\boldsymbol\alpha}(\widetilde\xi)
       =p_T\,\E\mathcal O_T(\widetilde\xi).
\]
Canceling $p_T$ proves \eqref{eq:transfer}. 
\end{proof}

\paragraph{Finiteness of the replacement constant.}
Recall $J_d=\max_\sigma|\theta_d^\epsilon(\sigma)|$ for a REM clause of arity $d$. Its Gaussian entries have standard deviation $\sqrt\epsilon\,2^{-2d}$. Gaussian maximum bounds give, for every fixed $t>0$,
\begin{equation}\label{eq:expsums}
 \sum_{d\ge2}d(\E e^{tJ_d}-1)<\infty,\qquad
 \sum_{d\ge2}d(d-1)\E[(e^{2J_d}-1)e^{tJ_d}]<\infty.
\end{equation}
Indeed, the summands are bounded by a polynomial in $d$ times $2^{-2d}$. The Poisson exponential formula applied to $\mathsf B$, and the same formula with one clause contribution inserted, now give
\[
 \E e^{t\mathsf B}<\infty,\qquad
 \E[\mathsf L e^{t\mathsf B}]<\infty.
\]
Since $\mathsf K\le \mathsf L$, these are precisely the bounds needed in \eqref{eq:Cns}. The unary Gaussians preserve them. For the original clauses it suffices that $\E e^{tJ_0}<\infty$ for all $t>0$, where $J_0=\max_\sigma|\theta(\sigma)|$; this holds in both \eqref{eq:pspin} and \eqref{eq:ksat}. The countable sums in the proof can equivalently be handled first with finitely many clause arities and then by these bounds.

\subsection{The covariance of two cavity outputs}\label{sec:twosite}

We now use Proposition \ref{prop:transfer} to obtain an analogue of the geometric covariance bound \eqref{eq:geomcov} for the cavity outputs. Take $s=2$. For each terminal label $\alpha$, the two cavity sites give the fields $A_1^\alpha,A_2^\alpha$ and magnetizations $\xi_1^\alpha,\xi_2^\alpha$ of \eqref{eq:barycavityoutputs}. Both sites are evaluated in the same state $\alpha$: they use the same state variables along its path, and their respective site variables indexed by $\mathcal I_1$ and $\mathcal I_2$.

Along the path $\beta_0=*,\ldots,\beta_r=\alpha$ to $\alpha$,
collect the joint variables
\[
 \omega_{\beta_j}
   =\bigl(U_{\beta_j},
          (Z_{\beta_j}^I)_{I\in\mathcal I}\bigr),
 \qquad
 \mathcal I=\mathcal I_1\cup\mathcal I_2.
\]
We will again abbreviate $\omega_{\beta_j}$ by $\omega_j$
and write
\[
 X_r(\omega_0,\ldots,\omega_r)=A_1^\alpha+A_2^\alpha,
 \qquad
 \xi_i(\omega_0,\ldots,\omega_r)=\xi_i^\alpha,
 \quad i=1,2.
\]
Each $\xi_i$ uses the common state variables and its own
site variables, indexed by $\mathcal I_i$.
Use this $X_r$ in the recursion \eqref{eq:Xrec}. Fix $p<r$ and consider the descendant paths of
$\beta_p\in\N^p$. Hold the cavity disorder and the joint prefix $\omega_{\le p}=(\omega_0,\ldots,\omega_p)$ on the path to $\beta_p$ fixed. The law with density $\rho_{p:r}$ from \eqref{eq:Rsegment} generates the remaining variables along one path from $\beta_p$ to a terminal label $\alpha$. In particular, it averages the future state variables and the future site variables of both cavity fields together.

Under this law, consider the conditional covariance
\begin{equation}\label{eq:Cp}
 C_p(\omega_{\le p})=\Cov^X_{p\to r}(\xi_1,\xi_2)
     =\E^X_{p\to r}(\xi_1\xi_2)
       -(\E^X_{p\to r}\xi_1)(\E^X_{p\to r}\xi_2).
\end{equation}
For example, writing $\omega_{>p}=(\omega_{p+1},\ldots,\omega_r)$,
\[
 \E^X_{p\to r}(\xi_1\xi_2)
 =\int \xi_1(\omega_{\le p},\omega_{>p})
        \xi_2(\omega_{\le p},\omega_{>p})
        \rho_{p:r}(\omega_{\le p},\omega_{>p})\,d\omega_{>p}.
\]

We want to bound the average of $(C_p)^2$ under the changed prefix law, whose density is $\rho_{0:p}$. In the proof below, four independent continuations of this prefix will express the squared covariance through moments to which Proposition \ref{prop:transfer} applies. The following is the analogue of  \cite[Lemma 2]{rRSB}.

\begin{proposition}[Two-site covariance bound]\label{prop:cov}
The conditional covariance in \eqref{eq:Cp} satisfies
\begin{equation}\label{eq:covsmall}
 \sup_{p<r}\E\int
           C_p(\omega_{\le p})^2\rho_{0:p}(\omega_{\le p})
                        \,d\omega_{\le p}
    \le\frac{\delta^2}{4}+4C_{4,2}\sqrt\eta
    =:\epsilon_{\rm cut}.
\end{equation}
\end{proposition}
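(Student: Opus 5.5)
We expand the square of the covariance into four-replica moments and compare them, through Proposition \ref{prop:transfer} with $n=4$, $s=2$, to the corresponding barycenter moments. Those barycenter moments are controlled by the geometric bound of Lemma \ref{lem:gram}, or more precisely by its barycenter analogue at two sites.

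Fix $p<r$. Take four labels $\alpha_1,\ldots,\alpha_4$ that separate pairwise exactly at depth $p$, so $\alpha_\ell\wedge\alpha_{\ell'}=p$ for $\ell\ne\ell'$; call this shape $T$. Its probability is positive, since every bin carries positive mass. Under the changed array $\widetilde\omega$, these four paths share the prefix $\widetilde\omega_{\le p}$, whose law has density $\rho_{0:p}$. Given the prefix, the continuations are independent and each has density $\rho_{p:r}$. This is the construction of Section \ref{sec:changeofmeasure}, in which children are independent given their ancestors.

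Expanding $C_p^2$ gives
\[
 C_p^2=\E^X(\xi_1\xi_2)^2-2\,\E^X(\xi_1\xi_2)\,\E^X\xi_1\,\E^X\xi_2+(\E^X\xi_1)^2(\E^X\xi_2)^2 .
\]
Each of the three terms is the conditional expectation of a product over distinct continuations:
\[
 \xi_1^{\alpha_1}\xi_2^{\alpha_1}\xi_1^{\alpha_2}\xi_2^{\alpha_2},
 \qquad
 \xi_1^{\alpha_1}\xi_2^{\alpha_1}\xi_1^{\alpha_2}\xi_2^{\alpha_3},
 \qquad
 \xi_1^{\alpha_1}\xi_2^{\alpha_2}\xi_1^{\alpha_3}\xi_2^{\alpha_4}.
\]
Here the first monomial uses replicas $1,2$, the second uses $1,2,3$, and the third uses all four. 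Integrating against $\rho_{0:p}$ and the cavity disorder turns $\E\int C_p^2\rho_{0:p}$ into the combination $\E\mathcal O^{(1)}_T(\widetilde\xi)-2\E\mathcal O^{(2)}_T(\widetilde\xi)+\E\mathcal O^{(3)}_T(\widetilde\xi)$. Each $\mathcal O^{(j)}$ is a monomial of the form \eqref{eq:testedmonomial} with $n=4$ and $s=2$. Unused replicas are harmless, since they contribute empty products. By Proposition \ref{prop:transfer}, each of the three terms can be replaced by the same monomial in the barycenters $b$, at cost $C_{4,2}\sqrt\eta$ per term. With coefficients $1,2,1$ the total cost is $4C_{4,2}\sqrt\eta$.

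It remains to show that the same combination for the barycenters equals $\E\int (C_p^b)^2$, and that this is at most $\delta^2/4$. Here $C_p^b$ is the covariance of $b^{1}_\alpha$ and $b^{2}_\alpha$ over the future variables $(U_{>p},Z^1_{>p},Z^2_{>p})$, with $U_{\le p},Z^1_{\le p},Z^2_{\le p}$ fixed. The two sites have independent site variables. Averaging over $Z^i_{>p}$ therefore reduces $b^i$ to $F(U_{\le p},U,Z^i_{\le p})$, as in \eqref{eq:gp}. Hence $C^b_p=C(Z^1_{\le p},Z^2_{\le p})$ is exactly the kernel \eqref{eq:CUZZ}, and $\E (C_p^b)^2\le\delta^2/4$ by \eqref{eq:geomcov}. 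The barycenter sites that appear are indexed by $\mathcal I_1,\mathcal I_2$, while the observed $b^i$ sit at the cavity sites themselves. Since site arrays are i.i.d., only the law of the pair matters.

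The step needing the most care is the bookkeeping that the same algebraic identity holds on both sides. This means checking that the changed law really makes the four continuations conditionally i.i.d. given the prefix, with the prefix distributed by $\rho_{0:p}$ and the root fixed as in $\mathcal F$. Once that holds, the factorization of $\E^X$-products follows, and the supremum over $p$ is immediate since no bound depends on $p$.
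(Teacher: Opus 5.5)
Your proposal is correct and follows essentially the same route as the paper: you expand the squared conditional covariance into the three four-copy moments of \eqref{eq:fourcopy}, transfer each with Proposition \ref{prop:transfer} ($n=4$, $s=2$) at total cost $4C_{4,2}\sqrt\eta$, and identify the barycenter side with the kernel of Lemma \ref{lem:gram} to get the bound $\delta^2/4$. You also check explicitly that the four-branch shape has positive probability, which the transfer proposition requires and the paper leaves implicit.
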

The dependence of $C_p$ and $\rho_{0:p}$ on the cavity disorder is implicit, and the outer $\E$ averages the cavity disorder. Thus the factor $\rho_{0:p}$ gives the changed law of the nonroot prefix variables $\omega_1,\ldots,\omega_p$, while $\omega_0$ is averaged under its original law.
\begin{proof}
Choose four terminal labels $\alpha_1,\ldots,\alpha_4$ descending from $\beta_p$ through four distinct children such that $\alpha_j\wedge\alpha_{j'}=p$ for $j\ne j'$. We will use the following identity first with $Y_i^{(j)}=b_{\alpha_j}^i$ and then with $Y_i^{(j)}=\widetilde\xi_i^{\alpha_j}$. In each case, let $\mathcal H_p$ be the sigma-algebra generated by the cavity disorder and the common prefix of the array in question. For $b$ this prefix is $(U_{\le p},Z_{\le p}^1,Z_{\le p}^2)$; for $\widetilde\xi$ it is $\widetilde\omega_{\le p}$. Given $\mathcal H_p$, the four pairs $(Y_1^{(j)},Y_2^{(j)})$ are independent and identically distributed. Therefore
\begin{align}
 \E\Cov(Y_1^{(1)},Y_2^{(1)}\mid\mathcal H_p)^2
  ={}&\E Y_1^{(1)}Y_2^{(1)}Y_1^{(2)}Y_2^{(2)}\notag\\
    &-2\E Y_1^{(1)}Y_2^{(1)}Y_1^{(2)}Y_2^{(3)}
      +\E Y_1^{(1)}Y_2^{(2)}Y_1^{(3)}Y_2^{(4)}.
        \label{eq:fourcopy}
\end{align}
Apply Proposition \ref{prop:transfer} with $n=4$, $s=2$ to each term: replacing $Y_i^{(j)}=b_{\alpha_j}^i$ by $Y_i^{(j)}=\widetilde\xi_i^{\alpha_j}$ changes the combination by at most $4C_{4,2}\sqrt\eta$.

For the original barycenters $Y_i^{(j)}=b_{\alpha_j}^i$, average the two future site paths first. Using $F$ from \eqref{eq:gp}, the conditional covariance becomes
\[
 \Cov_{U_{>p}}\bigl(
       F(U_{\le p},U_{>p},Z_{\le p}^1),
       F(U_{\le p},U_{>p},Z_{\le p}^2)\bigr).
\]
Lemma \ref{lem:gram} bounds \eqref{eq:fourcopy} by $\delta^2/4$. 
To finish the proof it remains to notice that, for the changed array, the left side of \eqref{eq:fourcopy} is exactly the integral in \eqref{eq:covsmall}, by \eqref{eq:Rsegment}.
\end{proof}

\begin{remark} For comparison, instead of using Lemma \ref{lem:gram}, a direct scalar-product calculation gives the weaker bound $\delta^2$. Write
$R_{ij}=b_{\alpha_i}\cdot b_{\alpha_j}$.
After averaging the two site coordinates, the right-hand
side of \eqref{eq:fourcopy} becomes
\[
 \E\bigl[R_{12}^2-2R_{12}R_{13}+R_{13}R_{24}\bigr]
 =\E\bigl[(R_{12}-R_{13})(R_{12}-R_{24})\bigr]
 \le\delta^2.
\]
The equality uses symmetry of the four branches.
The inequality follows because all these scalar products
belong to the same interval $[a_p,b_p]$ of width at most
$\delta$. Lemma \ref{lem:gram} gives the sharper bound
$\delta^2/4$ used here.
\end{remark}

This completes the use of the original cavity equations: only four replicas and two cavity sites were needed, together with the noncavity sites supplying the shape tests. The next section uses the nonnegativity in \eqref{eq:covsmall} to obtain the same conclusion for prescribed, deterministic cavity clauses.

\section{Prescribed clauses via the REM perturbation}\label{sec:probes}

Section \ref{sec:twosite} bounds the covariance of two cavity outputs, averaged over the random cavity disorder. We now use this bound for cavity fields with prescribed multipliers $1+z_iA_i$. The first result is that their conditional covariance is small. We then vary the two unary fields and show that the joint path density is close to the product of the two one-site path densities. Both estimates will be used in Section \ref{sec:response}.

Throughout Sections \ref{sec:probes}--\ref{sec:graphs}, the perturbation strength $\epsilon>0$ is fixed, and limits are along a sequence of cuts with $\delta,\eta\to0$; the depth $r$ is unrestricted. The prescribed polynomials and their evaluation parameters are kept fixed while taking this limit.

\subsection{A prescribed test clause with multiplier $1+zA$}\label{sec:multiplier}

We now construct a cavity field $c$ whose multiplier $ \Av_\varsigma
 e^{c(b_\alpha^1,\ldots,b_\alpha^m,\varsigma)}$
                       has a
prescribed dependence on the cluster magnetizations $b_\alpha^1,\ldots,b_\alpha^m$.
Choose $m\ge1$ input sites and a deterministic function
$A:[0,1]^m\to(0,\infty)$ that is affine in each coordinate
separately. For each terminal label $\alpha$, write
\[
 S_j^\alpha=\frac{1+b_\alpha^j}{2},\qquad
 A^\alpha=A(S_1^\alpha,\ldots,S_m^\alpha).
\]
Our goal is to construct one clause
whose cavity multiplier in cluster $\alpha$ is proportional
to $1+zA^\alpha$, for a parameter $z>0$.

Let $\sigma_1,\ldots,\sigma_m$ be the input spins and
$\varsigma$ the cavity spin. Define a deterministic
clause $\vartheta_*:\{-1,+1\}^{m+1}\to\mathbb R$ by
\begin{align}
 e^{\vartheta_*(\sigma_1,\ldots,\sigma_m,-1)}&=1,\notag\\
 e^{\vartheta_*(\sigma_1,\ldots,\sigma_m,+1)}
   &=A\bigl((1+\sigma_1)/2,\ldots,(1+\sigma_m)/2\bigr).
       \label{eq:targettable}
\end{align}
Strict positivity of $A$ makes all these interaction
values finite. Since $A$ is multiaffine, the Boltzmann
interpolation \eqref{eq:extension} gives, for every $\alpha$,
\[
 e^{\vartheta_*(b_\alpha^1,\ldots,b_\alpha^m,-1)}=1,
 \qquad
 e^{\vartheta_*(b_\alpha^1,\ldots,b_\alpha^m,+1)}
     =A(S_1^\alpha,\ldots,S_m^\alpha)=A^\alpha.
\]
Now add to the cavity field the unary field $h_{\rm u}(1+\varsigma)/2$ with
$h_{\rm u}=\log z$. The resulting multiplier and cavity
magnetization are
\begin{equation}\label{eq:probe}
 \Av_\varsigma
 e^{\vartheta_*(b_\alpha^1,\ldots,b_\alpha^m,\varsigma)
                       +h_{\rm u}(1+\varsigma)/2}
       =\frac{1+zA^\alpha}{2},
 \qquad
 \xi^\alpha=\frac{zA^\alpha-1}{zA^\alpha+1}.
\end{equation}
The factor $1/2$ has no effect on the path densities:
adding a constant to $X_r$ adds it to every $X_j$ in
\eqref{eq:Xrec} and leaves $\rho_j$ unchanged. Along a path
ending at $\alpha$, we may therefore use
$X_r=\log(1+zA^\alpha)$. When working along a generic path,
we suppress $\alpha$ and write $X_r=\log(1+zA)$ and $\xi$;
there $A$ means the evaluated quantity $A(S_1,\ldots,S_m)$.

We call the deterministic clause $\vartheta_*$ a \emph{prescribed test clause} that realizes its desired multiplier. To connect this deterministic construction with our random perturbation, let $\theta_{m+1}^\epsilon$
be one REM clause of arity $m+1$. Its Gaussian entries
have full support, so, for the fixed $\epsilon>0$ and every $t>0$,
\[
 \Prob\Biggl(
   \max_{\substack{\sigma\in\{-1,+1\}^m\\
                    \varsigma\in\{-1,+1\}}}
   \bigl|\theta_{m+1}^\epsilon(\sigma,\varsigma)
               -\vartheta_*(\sigma,\varsigma)\bigr|<t
       \Biggr)>0.
\]
This means that a REM clause together with its Gaussian unary field can approximate any prescribed clause and unary field in \eqref{eq:probe} with positive probability. The next subsection uses this to transfer the covariance bound to the
prescribed field. The test polynomials needed in
Section \ref{sec:graphs} will have exactly this multiaffine form.

\subsection{The covariance for two prescribed fields}\label{sec:prescribe}

Our starting point is the estimate \eqref{eq:covsmall}:
\[
 \sup_{p<r}\E\int C_p(\omega_{\le p})^2
       \rho_{0:p}(\omega_{\le p})\,d\omega_{\le p}
       \le\epsilon_{\rm cut}\longrightarrow0.
\]
Here $C_p$ is computed from the two random cavity fields
in \eqref{eq:barycavityfield}, and the outer expectation
averages their counts and couplings. We want the same
small covariance when each of these entire fields is
replaced by a field of our choosing as in \eqref{eq:probe}. We first specify these two fields and the covariance that will replace $C_p$.

Choose strictly positive multiaffine polynomials
$A_i:[0,1]^{m_i}\to(0,\infty)$ and parameters $z_i>0$,
$i=1,2$. Use disjoint input sites for the two polynomials,
and write
\[
 \boldsymbol b_i^\alpha=(b_\alpha^{i,1},\ldots,b_\alpha^{i,m_i}),
 \qquad
 A_i^\alpha=A_i\bigl((1+b_\alpha^{i,j})/2:j\le m_i\bigr).
\]
Let $\vartheta_{*,i}$ be the deterministic clause
\eqref{eq:targettable} constructed from $A_i$. The prescribed
cavity field at site $i$ is
\[
 c_{i,*}^\alpha(\varsigma)
   =\vartheta_{*,i}(\boldsymbol b_i^\alpha,\varsigma)
           +(\log z_i)\frac{1+\varsigma}{2}.
\]
By \eqref{eq:probe}, its multiplier is $(1+z_iA_i^\alpha)/2$
and its cavity magnetization is $(z_iA_i^\alpha-1)/(z_iA_i^\alpha+1)$.

As in Section \ref{sec:twosite}, the two fields use the same
state path and separate site variables. At each vertex collect
them into
\[
 \omega_\beta
    =\bigl(U_\beta,(Z_\beta^{i,j})_{i\le2,\ j\le m_i}\bigr).
\]
Along a path to $\alpha$, suppress $\alpha$ and define the
normalized terminal field and magnetization by
\begin{equation}
 X_r^*=\log(1+z_1A_1)+\log(1+z_2A_2),
 \qquad
 \xi_i^*=\frac{z_iA_i-1}{z_iA_i+1},\quad i=1,2.
 \label{eq:AA12}
\end{equation}
Apply
\eqref{eq:Xrec} to $X_r^*$ to obtain $X_j^*$ and the
densities $\rho_{p:r}^*$. With the prefix $\omega_{\le p}$
fixed, write
\[
 \E^*_{p\to r}f
    =\int f\,\rho_{p:r}^*\,d\omega_{p+1}\cdots d\omega_r,
 \qquad
 C_p^*=\E^*_{p\to r}(\xi_1^*\xi_2^*)
           -(\E^*_{p\to r}\xi_1^*)(\E^*_{p\to r}\xi_2^*).
\]
Thus $C_p^*$ is the covariance from \eqref{eq:Cp} for the
prescribed fields. It is a function of the retained uniform
variables, with no random clause disorder left.

We use $\|f\|_{2,\mathrm{prod}}$ for the ordinary $L^2$ norm under
the original product law of all uniform variables on which
$f$ depends, including the root. This is different from
the covariance norm $\|f\|_p$ of Section \ref{sec:kernels}.
For $C_p^*$, it averages the state and site prefixes:
\[
 \|C_p^*\|_{2,\mathrm{prod}}^2
     =\int |C_p^*(\omega_{\le p})|^2\,d\omega_{\le p}.
\]

\begin{proposition}[Covariance for prescribed fields]\label{prop:prescribedcov}
For every fixed choice of $A_1,A_2,z_1,z_2$ as above,
\begin{equation}\label{eq:fixedcov}
 \sup_{p<r}\int |C_p^*(\omega_{\le p})|^2\,d\omega_{\le p}
   =\sup_{p<r}\|C_p^*\|_{2,\mathrm{prod}}^2
      \longrightarrow0
\end{equation}
along every sequence of cuts with $\delta,\eta\to0$.
\end{proposition}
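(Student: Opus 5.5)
We will deduce \eqref{eq:fixedcov} from Proposition \ref{prop:cov} by restricting the average over the cavity disorder to a small event on which each random cavity field is uniformly close to the prescribed one. Fix $t>0$ and let $E_t\in\mathcal F_{\rm cav}$ be the event that, for $i=1,2$: the count of REM clauses of arity $m_i+1$ at site $i$ equals one; all other counts $N_{i,0}$ and $N_{i,d}$, $d\ne m_i+1$, vanish; the table of that clause is within $t$ of $\vartheta_{*,i}$ in sup norm; and $|g_i-\log z_i|<t$. If $m_1+1=m_2+1$ this is still one clause per site, since the sites carry separate counts. Independence of counts, tables and unary fields, together with full support of the Gaussians, gives $\Prob(E_t)=:\pi_t>0$. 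Because the integrand in \eqref{eq:covsmall} is nonnegative,
\[
 \sup_{p<r}\E\Bigl[\one_{E_t}\int C_p^2\rho_{0:p}\,d\omega_{\le p}\Bigr]\le\epsilon_{\rm cut},
\]
and the cavity fields on $E_t$ use exactly $m_i$ input sites at site $i$, matching the prescribed structure after relabeling input sites.

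Next, we compare on $E_t$ the random quantities with the prescribed ones, uniformly in $p$, $r$ and the uniform variables. On $E_t$ the terminal field satisfies $|X_r-X_r^*|\le 4t$ pointwise, since each log-multiplier moves by at most $2t$ (the clause by $t$, the unary field by $t$), and the extension \eqref{eq:extension} is a convex combination. The recursion \eqref{eq:Xrec} is monotone and commutes with additive constants, so $|X_j-X_j^*|\le4t$ for all $j$. Hence every $\rho_j/\rho_j^*\in[e^{-8\zeta_j t},e^{8\zeta_j t}]$. However, a product of $r$ such factors is not uniformly controlled in $r$, and this is the main obstacle. To get around it, avoid comparing individual $\rho_j$ and use instead the telescoped form $\rho_{p:r}=\exp(\sum_{j=p}^{r-1}\zeta_j(X_{j+1}-X_j))$, which is not a telescoping product because the $\zeta_j$ differ. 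The cleaner route is the representation of $\E^X_{p\to r}$ as a Gibbs-type measure: by Lipschitz dependence of $\rho_{p:r}$ on $X_r$ in the form used in \cite{rRSB}, $\E_{p\to r}^X f$ is a $1$-Lipschitz functional of $X_r$ in sup norm, with constant independent of $r$. The argument goes by induction from the leaf: perturbing $X_r$ by a function bounded by $\varepsilon$ changes every $\rho_{j:r}$-expectation of a bounded $f$ by at most $2\|f\|_\infty\varepsilon$, through derivative-in-$X_r$ formulas whose derivatives are covariances of $f$ with the perturbation along the path, each bounded without summing over levels. Similarly $\rho_{0:p}$ is controlled because $\int f\rho_{0:p}$ equals a nested $\zeta$-average. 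With $|\xi_i-\xi_i^*|\le 2t$, we get $|C_p-C_p^*|\le Ct$ pointwise on $E_t$.

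Finally, convert the changed prefix law $\rho_{0:p}$ into the product law. Since $X_r^*$ is bounded between $0$ and $\log((1+z_1\max A_1)(1+z_2\max A_2))=:L$, which is deterministic, every $X_j^*$ lies in $[0,L]$. This gives $\rho_{0:p}^*\ge e^{-\sum_j\zeta_j(\cdot)}$, which again risks $r$-dependence. Instead write $\rho^*_{0:p}=e^{\zeta_{p-1}X^*_p-\zeta_0X_0^*}\prod\!e^{(\zeta_{j-1}-\zeta_j)X_j^*}$, using summation by parts. Since $\zeta_j$ is increasing and $X_j^*\in[0,L]$, the product lies in $[e^{-L},1]$ because $\sum(\zeta_j-\zeta_{j-1})\le1$. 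Thus $\rho^*_{0:p}\ge e^{-2L}$, and the same holds for $\rho_{0:p}$ on $E_t$ with $L+4t$. Combining,
\[
 \pi_t e^{-2L-8t}\|C_p^*\|^2_{2,\rm prod}\le\epsilon_{\rm cut}+C't,
\]
uniformly in $p$. Letting the cut refine and then $t\to0$ proves \eqref{eq:fixedcov}; the derivative bound in the previous paragraph should also use summation by parts in the same way.
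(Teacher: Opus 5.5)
\textbf{Genuine gap: your event $E_t$ has probability zero, so the localization step fails.} In the cavity field \eqref{eq:barycavityfield} the REM counts are $N_{i,d}\sim\operatorname{Pois}(d)$ for \emph{every} $d\ge2$. Requiring all of them except $N_{i,m_i+1}$ to vanish therefore has probability $\prod_{d\ge2,\,d\ne m_i+1}e^{-d}=0$. In fact, since $\sum_d\Prob(N_{i,d}\ge1)=\infty$, Borel--Cantelli shows that each cavity site almost surely carries clauses of infinitely many arities. Full support of finitely many Gaussians cannot compensate for this. Without $\Prob(E_t)>0$ you cannot divide the restricted form of \eqref{eq:covsmall} by $\Prob(E_t)$, and that division is the crux of the argument.

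The missing idea is to prescribe counts and tables only up to a finite arity $D$ and to control the remaining clauses in sup norm. Set $T_D=\sum_{i=1}^2\sum_{d>D}\sum_{a\le N_{i,d}}\|\theta^\epsilon_{i,d,a}\|_\infty$. Then $\E T_D=2\sum_{d>D}d\,\E J_d\to0$ by \eqref{eq:tablemax}. For large $D$, Markov's inequality gives $\Prob(T_D\le t)\ge1/2$. Independence then gives a positive probability, independent of the cut, for the intersection of $\{T_D\le t\}$ with the finitely many count, table and unary conditions. On this event the random fields are no longer structurally identical to the prescribed ones: the tail clauses bring extra input sites, which simply integrate out of the prescribed recursion. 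They are, however, uniformly within $4t+T_D\le5t$ of the prescribed fields, and the rest of your comparison goes through with this error.

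\textbf{Bookkeeping slip in the final inequality.} The $t$-error must carry the factor $\pi_t$. On $E_t$ you have $e^{-2L-8t}\|C_p^*\|_{2,\mathrm{prod}}^2\le2\int C_p^2\rho_{0:p}\,d\omega_{\le p}+2C^2t^2$. Multiplying by $\mathbf1_{E_t}$ before averaging gives $\pi_t\,e^{-2L-8t}\|C_p^*\|_{2,\mathrm{prod}}^2\le2\epsilon_{\rm cut}+2C^2t^2\pi_t$. As you wrote it, dividing by $\pi_t$ leaves a term $C't/\pi_t$. That term does not vanish as $t\downarrow0$, because $\pi_t\to0$.

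\textbf{The density comparison is simpler than your middle paragraph suggests.} It needs no derivative formulas or appeal to \cite{rRSB}. Use the summation by parts you already use at the end: $\log\rho_{p:q}=\zeta_{q-1}X_q-\zeta_pX_p-\sum_{j=p+1}^{q-1}(\zeta_j-\zeta_{j-1})X_j$, whose absolute coefficient mass is at most $2$. Each $X_j$ is $1$-Lipschitz in $X_r$ in sup norm. Hence $\|X_r-X_r^*\|_\infty\le\Delta$ gives $\rho_{p:q}/\rho^*_{p:q}\in[e^{-2\Delta},e^{2\Delta}]$, uniformly in the depth. This is exactly how the paper propagates the error.

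With these repairs your argument becomes the paper's proof.
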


\begin{proof}
\emph{Choosing the counts and couplings in the random cavity
fields.}
Recall the fields $A_i^\alpha(\varsigma)$ in
\eqref{eq:barycavityfield}, and take $s=2$. Put $d_i=m_i+1$.
We match the $j$th input of $c_{i,*}^\alpha$ with
\[
 I=(i,d_i,1,j),\qquad j\le m_i,
\]
the $j$th input of the first REM clause of arity $d_i$
at cavity site $i$. 

Fix $0<t\le1$. We will restrict the disorder
$\mathcal F_{\mathrm{cav}}$ in \eqref{eq:Fcav} to the following event $E_t$ on which the full cavity fields \eqref{eq:barycavityfield} approximate $c_{i,*}^\alpha$. Choose $D\ge\max(d_1,d_2)$, to be increased below, and require, for $i=1,2$,
\[
 N_{i,0}=0,\qquad N_{i,d_i}=1,\qquad
 N_{i,d}=0\quad(2\le d\le D,\ d\ne d_i),
\]
and
\[
 \|\theta_{i,d_i,1}^{\epsilon}-\vartheta_{*,i}\|_\infty<t,
 \qquad |g_i-\log z_i|<t.
\]
The norm here is the maximum over the $2^{d_i}$ spin
configurations of the clause. Thus we retain one REM
clause at each site, prescribe its interaction values
and unary field to accuracy $t$, and remove all other
clauses through arity $D$. For the remaining clauses,
also require
\[
 T_D:=\sum_{i=1}^2\sum_{d>D}\sum_{a=1}^{N_{i,d}}
             \|\theta_{i,d,a}^{\epsilon}\|_\infty\le t.
\]
These conditions together define $E_t$. By
\eqref{eq:expsums},
\[
 \E T_D=2\sum_{d>D}d\,\E J_d\longrightarrow0.
\]
Choose $D$ so that $\E T_D<t/2$. Then
$\Prob(T_D\le t)\ge1/2$. The finite count conditions
have positive Poisson probabilities, and the finitely
many prescribed Gaussian variables have full support.
Independence therefore gives $\Prob(E_t)>0$. This
probability depends on the prescribed fields, $\epsilon$
and $t$, but not on the cut.

On $E_t$, the sum of the errors in the two cavity fields
is bounded by
\[
 \sum_{i=1}^2
     \|A_i^\alpha(\varsigma)-c_{i,*}^\alpha(\varsigma)\|_\infty
       \le 4t+T_D\le5t.
\]
The supremum is over the cavity spin and all uniform input
variables. Indeed, the log Boltzmann interpolation
\eqref{eq:extension} changes by at most the maximum error
in its interaction values; each unary field contributes
at most $t$, and the unwanted clauses contribute at most
$T_D$.

For comparison with the normalized target $X_r^*$, use
$X_r=\log\Lambda_\alpha+2\log2$ in the random recursion
for the rest of this proof. The added constant leaves
its path densities, and hence $C_p$, unchanged. The above inequality implies that
\[
 \|X_r-X_r^*\|_\infty\le5t,\qquad
 \max_{i=1,2}\|\xi_i-\xi_i^*\|_\infty\le5t.
\]
Indeed, let
$
 \Delta_i
   =\|A_i^\alpha(\varsigma)-c_{i,*}^\alpha(\varsigma)\|_\infty,$
so that $\Delta_1+\Delta_2\le5t.$
For each site,
\[
 e^{-\Delta_i}\Av_\varsigma e^{c_{i,*}^\alpha(\varsigma)}
 \le \Av_\varsigma e^{A_i^\alpha(\varsigma)}
 \le e^{\Delta_i}\Av_\varsigma e^{c_{i,*}^\alpha(\varsigma)}.
\]
Thus the log-multiplier error at site $i$ is at most
$\Delta_i$, and summing gives
$\|X_r-X_r^*\|_\infty\le\Delta_1+\Delta_2\le5t$.
The magnetization of a field $c$ is
$\tanh((c(+1)-c(-1))/2)$. Since $\tanh$ is
$1$-Lipschitz, the magnetization error at site $i$
is also at most $\Delta_i\le5t$.

\emph{Propagating the approximation through the recursion.}
We next compare the path laws used in $C_p$ and $C_p^*$.
Both recursions may be performed on the full collection
of input arrays: unused coordinates
integrate out of the prescribed recursion.
Each map in \eqref{eq:Xrec} is $1$-Lipschitz in the
supremum norm. Consequently a terminal error at most
$\Delta$ remains at most $\Delta$ at every level, and
$|X_r|\le B$ implies $|X_j|\le B$.

To see why the errors in the densities also stay bounded
independently of the path length, rewrite their definition
by summation by parts:
\begin{equation}\label{eq:abel}
 \log\rho_{p:q}
   =\sum_{j=p}^{q-1}\zeta_j(X_{j+1}-X_j)
   =\zeta_{q-1}X_q-\zeta_pX_p
       -\sum_{j=p+1}^{q-1}(\zeta_j-\zeta_{j-1})X_j,
       \qquad p<q.
\end{equation}
The total absolute coefficient mass in the last expression
is $2\zeta_{q-1}\le2$. Hence
\begin{equation}\label{eq:densitybound}
 e^{-2B}\le\rho_{p:q}\le e^{2B},\qquad
 \|\log\rho_{p:q}-\log\rho_{p:q}^*\|_\infty
       \le2\|X_r-X_r^*\|_\infty.
\end{equation}
In particular, for each fixed prefix, a terminal error
at most $\Delta$ gives
\[
 \int|\rho_{p:r}-\rho_{p:r}^*|\,d\omega_{p+1}\cdots d\omega_r
       \le e^{2\Delta}-1.
\]
Indeed, the ratio $\rho_{p:r}/\rho_{p:r}^*$ lies between
$e^{-2\Delta}$ and $e^{2\Delta}$, so
$|\rho_{p:r}-\rho_{p:r}^*|
\le(e^{2\Delta}-1)\rho_{p:r}^*$, and the latter density
integrates to one for each fixed prefix.

Apply this with $\Delta=5t$ to the three expectations
defining $C_p$, and use the output bounds above and
$|\xi_i|,|\xi_i^*|\le1$. There is a constant $c$, independent of the cut and of $0<t\le1$, such that on the event $E_t$,
\[
 |C_p(\omega_{\le p})-C_p^*(\omega_{\le p})|\le ct
 \qquad\text{for every }p<r
\]
almost everywhere in the prefix variables.

\emph{Applying the estimate from Section \ref{sec:twosite}.}
Choose $B_*$ with $|X_r^*|\le B_*$. On $E_t$ we have
$|X_r|\le B_*+5t$, so \eqref{eq:densitybound} gives
$\rho_{0:p}\ge e^{-2(B_*+5t)}$. Also, $ |C_p^*|^2\le2|C_p|^2+2c^2t^2$ and, therefore
\[
\mathbf1_{E_t} |C_p^*|^2\le  2e^{2(B_*+5t)}\mathbf1_{E_t}|C_p|^2\rho_{0:p}  +2 \mathbf1_{E_t} c^2t^2.
\]
Average this inequality over the cavity disorder and
the uniform prefix variables. The target $C_p^*$ is independent of the cavity
disorder, and the extra input coordinates on which it
does not depend integrate to one. Therefore
\begin{align*}
 \Prob(E_t)\|C_p^*\|_{2,\mathrm{prod}}^2
 &\le 2e^{2(B_*+5t)}
       \E\left[
          \mathbf1_{E_t}\int C_p^2\rho_{0:p}\,d\omega_{\le p}
                         \right]
       +2c^2t^2\Prob(E_t).
\end{align*}
Here $\E$ averages the cavity
disorder only; $d\omega_{\le p}$ includes the root under
its original law. The integral is nonnegative, so
\eqref{eq:covsmall} bounds the expectation on the right
by $\epsilon_{\rm cut}$. Dividing by $\Prob(E_t)$ gives
\begin{equation}\label{eq:localization}
 \sup_{p<r}\|C_p^*\|_{2,\mathrm{prod}}^2
 \le\frac{2e^{2(B_*+5t)}}{\Prob(E_t)}\epsilon_{\rm cut}
                                      +2c^2t^2.
\end{equation}
For fixed $t$, the event probability is positive and
independent of the cut. First let $\delta,\eta\to0$,
and then let $t\downarrow0$. This proves
\eqref{eq:fixedcov}.
\end{proof}

This is the same prescription of Poisson counts and
Gaussian couplings used in \cite[Section 6]{rRSB}, with an additional approximation error to track. We keep a neighborhood of the
prescribed values fixed while the cut error tends to zero.
Only the nonnegative estimate \eqref{eq:covsmall} is
restricted to this neighborhood; we do not need cavity
equations conditional on the event.

\subsection{Differentiating the two unary fields}\label{sec:hessian}

We now compare the joint change of density for the two
prescribed fields with the changes of density obtained
from each field separately. Apply \eqref{eq:Xrec} to
the three terminal fields
\[
 X_r^{12}=\log(1+z_1A_1)+\log(1+z_2A_2),\qquad
 X_r^i=\log(1+z_iA_i),\quad i=1,2.
\]
Write $X_j^{12},X_j^i$ for the resulting recursions and
$\rho_{p:r}^{12},\rho_{p:r}^i$ for their path densities.
All three use the same state path. The $i$th one-site
field uses only its own site variables; when placed on
the joint space, it ignores the other site's variables.
The two-site field $X^{12}$ is exactly $X^*$ from
\eqref{eq:AA12}.

If $X_j^{12}=X_j^1+X_j^2$ at every depth, the definition
of the densities would give
$\rho_{p:r}^{12}=\rho_{p:r}^1\rho_{p:r}^2$.
We therefore first study the difference
\[
 \Phi_p=X_p^{12}-X_p^1-X_p^2.
\]
This difference is zero at the terminal level.
Recursive averaging can produce a nonzero difference
at earlier levels, because the two fields share the
state variables. To estimate $\Phi_p$, we vary the
two unary fields $h_i=\log z_i$.

The mixed derivative of $\Phi_p$ is
$\partial_{h_1h_2}X_p^{12}$, since each $X_p^i$
depends on only one parameter. The next identity
expresses this derivative through the covariances
controlled in the preceding subsection.
Here $C_j$ denotes $C_j^*$ from that subsection,
and $\E^{12}_{p\to j}$ is expectation with density
$\rho_{p:j}^{12}$, keeping the prefix through $p$ fixed.

\begin{lemma}[Unary response identity]\label{lem:hessian}
For every $p<r$,
\begin{equation}\label{eq:hessian}
 \partial_{h_1h_2}X_p^{12}
 =\frac14\left[
    \zeta_pC_p+
    \sum_{j=p+1}^{r-1}(\zeta_j-\zeta_{j-1})
                            \E^{12}_{p\to j}C_j
           \right].
\end{equation}
The nonnegative coefficients inside the brackets have
sum $\zeta_{r-1}\le1$.
\end{lemma}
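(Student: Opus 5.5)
The plan is to differentiate the recursion \eqref{eq:Xrec} directly, once in $h_1$ and once in $h_2$. This produces a sum of one-step conditional covariances. Summation by parts then regroups that sum into the multi-step covariances $C_j$.

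\emph{First derivatives.} All quantities are smooth in $(h_1,h_2)$. At the leaf, $X_r^{12}$ is a smooth bounded function of $h_i=\log z_i$, uniformly in the uniform variables, since $A_i$ is bounded above and away from $0$. Every step of \eqref{eq:Xrec} is a log-Laplace transform of a bounded function, so we may differentiate under the integral at each level. For a single step, differentiating $X_j=\zeta_j^{-1}\log\E_{\omega_{j+1}}e^{\zeta_jX_{j+1}}$ gives
\[
\partial_{h_i}X_j=\E^{12}_{j\to j+1}\partial_{h_i}X_{j+1}.
\]
The densities multiply along the path, $\rho_{p:q}\rho_{q:r}=\rho_{p:r}$, so the tower property gives $\partial_{h_i}X_j=\E^{12}_{j\to r}\partial_{h_i}X_r$. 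At the leaf,
\[
\partial_{h_i}X_r=\frac{z_iA_i}{1+z_iA_i}=\frac{1+\xi_i^*}{2},
\]
and hence $\partial_{h_i}X_{j}=\tfrac12\bigl(1+\E^{12}_{j\to r}\xi_i^*\bigr)$.

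\emph{Second derivative.} Differentiate $\partial_{h_1}X_j=\int\rho_j\,\partial_{h_1}X_{j+1}\,d\omega_{j+1}$ in $h_2$. Using $\partial_{h_2}\rho_j=\zeta_j\rho_j(\partial_{h_2}X_{j+1}-\partial_{h_2}X_j)$, we get
\[
\partial_{h_1h_2}X_j=\E^{12}_{j\to j+1}\partial_{h_1h_2}X_{j+1}
 +\zeta_j\Cov^{12}_{j\to j+1}\bigl(\partial_{h_1}X_{j+1},\partial_{h_2}X_{j+1}\bigr).
\]
The leaf field is a sum of a function of $h_1$ and a function of $h_2$, so $\partial_{h_1h_2}X_r=0$. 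Iterating from $r$ down to $p$ with the tower property, and writing
\[
c_k=\Cov^{12}_{k\to k+1}\bigl(\E^{12}_{k+1\to r}\xi_1^*,\E^{12}_{k+1\to r}\xi_2^*\bigr),
\]
we obtain
\[
\partial_{h_1h_2}X_p=\frac14\sum_{k=p}^{r-1}\zeta_k\,\E^{12}_{p\to k}c_k .
\]

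\emph{Regrouping.} The law of total covariance along the path, applied one level at a time, gives $C_j=\E^{12}_{j\to j+1}C_{j+1}+c_j$ with $C_r=0$. Iterating,
\[
C_j=\sum_{k=j}^{r-1}\E^{12}_{j\to k}c_k .
\]
Substitute this into $\zeta_pC_p+\sum_{j=p+1}^{r-1}(\zeta_j-\zeta_{j-1})\E^{12}_{p\to j}C_j$, again using the tower property. The coefficient of $\E^{12}_{p\to k}c_k$ is then
\[
\zeta_p+\sum_{j=p+1}^{k}(\zeta_j-\zeta_{j-1})=\zeta_k,
\]
which matches the sum above and proves \eqref{eq:hessian}. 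The coefficients $\zeta_p$ and $\zeta_j-\zeta_{j-1}$ are nonnegative, and they telescope to $\zeta_{r-1}$.

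The only delicate point is bookkeeping. One must check that $C_j$ here is computed under the joint density $\rho^{12}$, which is the density defining $C_j^*$. One must also check that the prefix through $p$ is held fixed, so that $\E^{12}_{p\to j}$ composes correctly with the conditional laws at depth $j$. Justifying the differentiation under the integral is routine, by uniform boundedness of all derivatives in $h$ on compact parameter sets.
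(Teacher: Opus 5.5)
Your proposal is correct and follows essentially the same route as the paper: differentiate the recursion \eqref{eq:Xrec} to get $\partial_{h_1h_2}X_j=\E^{12}_j\partial_{h_1h_2}X_{j+1}+\zeta_j\Cov^{12}_j(\partial_{h_1}X_{j+1},\partial_{h_2}X_{j+1})$, use the law of total covariance $C_j=\E^{12}_jC_{j+1}+4\Cov^{12}_j(m_{1,j+1},m_{2,j+1})$, and then sum by parts. The only difference is bookkeeping: the paper substitutes the covariance recursion into the $H_j$ recursion and telescopes, while you expand both $H_p$ and $C_j$ in the one-step covariances $c_k$ and match coefficients. The two are equivalent.
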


\begin{proof}
Let $m_{i,j}=\partial_{h_i}X_j^{12}$ and
$H_j=\partial_{h_1h_2}X_j^{12}$.
Denote expectation and covariance in $\omega_{j+1}$
with density $\rho_j^{12}$ by $\E_j^{12}$ and
$\Cov_j^{12}$, with the prefix through $j$ fixed.
Differentiating the logarithm in \eqref{eq:Xrec} gives
\begin{align*}
 m_{i,j}
 &=\frac{\int m_{i,j+1}e^{\zeta_jX_{j+1}^{12}}
                         \,d\omega_{j+1}}
         {\int e^{\zeta_jX_{j+1}^{12}}\,d\omega_{j+1}}
 =\int m_{i,j+1}\rho_j^{12}\,d\omega_{j+1}
   =\E_j^{12}m_{i,j+1}.
\end{align*}
For the mixed derivative, observe first that
\[
 \partial_{h_2}\rho_j^{12}
 = \partial_{h_2} \exp\{\zeta_j(X_{j+1}^{12}-X_j^{12})\}
   =\zeta_j\rho_j^{12}(m_{2,j+1}-m_{2,j}).
\]
Differentiating the above identity $m_{1,j} =\int m_{1,j+1}\rho_j^{12}\,d\omega_{j+1}$ in $h_2$ therefore gives
\begin{align*}
 H_j
 &=\int\left(
       H_{j+1}\rho_j^{12}
       +m_{1,j+1}\partial_{h_2}\rho_j^{12}
       \right)d\omega_{j+1}\\
 &=\E_j^{12}H_{j+1}
   +\zeta_j\Bigl(
       \E_j^{12}(m_{1,j+1}m_{2,j+1})-m_{1,j}m_{2,j}
            \Bigr)\\
 &=\E_j^{12}H_{j+1}
   +\zeta_j\Cov_j^{12}(m_{1,j+1},m_{2,j+1}).
\end{align*}
At the terminal level,
\[
 m_{i,r}=\frac{z_iA_i}{1+z_iA_i}
        =\frac{1+\xi_i^*}{2},
 \qquad H_r=0.
\]
Iterating the first-derivative identity $m_{i,j}=\E_j^{12}m_{i,j+1}$ gives
\[
 m_{i,j}=\frac{1+\E^{12}_{j\to r}\xi_i^*}{2},\,\,\mbox{ or }\,\,
\E^{12}_{j\to r}\xi_i^* = 2 m_{i,j} -1.
\]
Thus, conditional on the prefix through $j+1$, the
means $\E^{12}_{j+1\to r}$ of the two outputs  $\xi_i^*$ are $2m_{i,j+1}-1$, and their covariance is $C_{j+1}$.
Their conditional product moment is consequently
\[
 \E^{12}_{j+1\to r}(\xi_1^*\xi_2^*)
   =C_{j+1}+(2m_{1,j+1}-1)(2m_{2,j+1}-1).
\]
Average this identity over $\omega_{j+1}$ with density
$\rho_j^{12}$, and subtract the product of the means
conditional on the prefix through $j$. This gives
\begin{align*}
 C_j
 &=\E_j^{12}C_{j+1}
   +\E_j^{12}\bigl[(2m_{1,j+1}-1)(2m_{2,j+1}-1)\bigr]
    -(2m_{1,j}-1)(2m_{2,j}-1)\\
 &=\E_j^{12}C_{j+1}
   +4\Cov_j^{12}(m_{1,j+1},m_{2,j+1}),
\end{align*}
where the last equality uses
$m_{i,j}=\E_j^{12}m_{i,j+1}$.
This is the law of total covariance in our notation.
At depth $r$, the entire path is fixed, so $C_r=0$.

Substituting this covariance identity into the recursion
for $H_j$ above yields
\[
 4H_j=4\E_j^{12}H_{j+1}
             +\zeta_j(C_j-\E_j^{12}C_{j+1}).
\]
Iterate from $p$ to $r$, using $H_r=0$ and the
composition rule
$\E^{12}_{p\to j}\E_j^{12}=\E^{12}_{p\to j+1}$:
\begin{align*}
 4H_p
 &=\sum_{j=p}^{r-1}\zeta_j\E^{12}_{p\to j}
                         (C_j-\E_j^{12}C_{j+1})=\sum_{j=p}^{r-1}\zeta_j\E^{12}_{p\to j}C_j
   -\sum_{j=p}^{r-1}\zeta_j\E^{12}_{p\to j+1}C_{j+1}\\
 &=\zeta_pC_p+
   \sum_{j=p+1}^{r-1}(\zeta_j-\zeta_{j-1})
                             \E^{12}_{p\to j}C_j
       -\zeta_{r-1}\E^{12}_{p\to r}C_r.
\end{align*}
The last term vanishes, proving \eqref{eq:hessian}.
\end{proof}

\subsection{Factorizing the two path densities}\label{sec:factorization}

We now integrate the preceding identity in the two
parameters to recover $\Phi_p=X_p^{12}-X_p^1-X_p^2$, and then use the
definition of the path densities. The resulting
factorization is the second estimate needed in
Section \ref{sec:response}.

\begin{proposition}[Factorization of the prescribed path densities]\label{prop:factorization}
For fixed prescribed clauses $A_1,A_2$ and $z_1,z_2>0$,
\begin{equation}\label{eq:Phismall}
 \sup_{p\le r}\|\Phi_p(z_1,z_2)\|_{2,\mathrm{prod}}
       \longrightarrow0,
\end{equation}
and
\begin{equation}\label{eq:Rfactor}
 \sup_{p<r}
   \|\rho_{p:r}^{12}-\rho_{p:r}^1\rho_{p:r}^2\|_{2,\mathrm{prod}}
       \longrightarrow0
\end{equation}
as $\delta,\eta\to0$, with $r$ unrestricted.
The first norm averages the retained prefixes; the second
averages the full path of state and site variables,
including the root. The three densities in
\eqref{eq:Rfactor} are evaluated on the same state path
and the two original, independent site variables.
\end{proposition}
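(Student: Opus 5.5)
The plan is to integrate the unary response identity of Lemma \ref{lem:hessian} in the two parameters and then convert the bound on $\Phi_p$ into a bound on the densities through the summation-by-parts formula \eqref{eq:abel}.

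\emph{Boundary terms.} Write $h_i=\log z_i$ and regard $\Phi_p$ as a function of $(h_1,h_2)\in\mathbb R^2$. As $h_2\to-\infty$ we have $z_2\to0$, so $\log(1+z_2A_2)\to0$ uniformly: $A_2$ is continuous and positive on the compact cube, hence bounded. The recursion \eqref{eq:Xrec} is $1$-Lipschitz in sup norm, so $X_p^2\to0$, and $X_p^{12}-X_p^1\to0$ uniformly in $p$, the cut and the variables. Thus $\Phi_p(h_1,-\infty)=0$, and similarly $\Phi_p(-\infty,h_2)=0$. It follows that
\[
 \Phi_p(h_1,h_2)=\int_{-\infty}^{h_1}\!\int_{-\infty}^{h_2}\partial_{s t}X_p^{12}(s,t)\,dt\,ds ,
\]
where $X_p^1,X_p^2$ drop out because each depends on only one parameter. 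The integrals over infinite ranges are a nuisance. To handle them, truncate at $-L$. The boundary error $\sup|\Phi_p(h_1,-L)|+\sup|\Phi_p(-L,h_2)|$ is at most $C e^{-L}$, since $|X_p^i|\le\log(1+z_i\max A_i)\le z_i\max A_i$.

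\emph{Bounding the mixed derivative.} On the compact rectangle $[-L,h_1]\times[-L,h_2]$, Lemma \ref{lem:hessian} expresses $4\partial_{st}X_p^{12}$ as a convex-type combination, with total mass at most $1$, of $C_p$ and $\E^{12}_{p\to j}C_j$. By Minkowski in $\|\cdot\|_{2,\mathrm{prod}}$, it suffices to bound $\|\E^{12}_{p\to j}C_j\|_{2,\mathrm{prod}}$. The density $\rho^{12}_{p:j}$ is bounded by $e^{2B}$ by \eqref{eq:densitybound}, with $B$ bounded uniformly over the rectangle. Jensen then gives $\|\E^{12}_{p\to j}C_j\|_{2,\mathrm{prod}}^2\le e^{2B}\|C_j\|_{2,\mathrm{prod}}^2$. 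Proposition \ref{prop:prescribedcov} makes $\sup_j\|C_j\|_{2,\mathrm{prod}}\to0$ at each fixed $(s,t)$.

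\emph{Main obstacle: uniformity in the parameters.} Proposition \ref{prop:prescribedcov} is stated for fixed $z_1,z_2$, and we need uniformity in order to integrate. I would first try dominated convergence in $(s,t)$ for $\int\!\int\|\partial_{st}X_p^{12}\|_{2,\mathrm{prod}}$. Here $\sup_p$ sits inside, but the integrand is bounded by $1/4$ since $|C_j|\le1$, and it converges to zero pointwise. This yields $\sup_p\|\Phi_p\|_{2,\mathrm{prod}}\le\int\!\int\sup_p\|\partial_{st}X_p^{12}\|_{2,\mathrm{prod}}+Ce^{-L}\to0$. We then let $L\to\infty$ after the cut limit. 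A more robust alternative would be to use equicontinuity of $C_j^*$ in $(z_1,z_2)$, which follows from the Lipschitz bounds on the recursion, together with a finite net. Measurability in $(s,t)$ is fine by continuity.

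\emph{Densities.} By \eqref{eq:abel} applied to each of the three recursions,
\[
 \log\rho^{12}_{p:r}-\log\rho^1_{p:r}-\log\rho^2_{p:r}=\zeta_{r-1}\Phi_r-\zeta_p\Phi_p-\sum_{j=p+1}^{r-1}(\zeta_j-\zeta_{j-1})\Phi_j .
\]
Since $\Phi_r=0$, the total coefficient mass is at most $2$, and the $L^2$ norm of this expression is at most $2\sup_j\|\Phi_j\|_{2,\mathrm{prod}}$; here each $\Phi_j$ is viewed as a function on the full path. All three densities lie in $[e^{-2B},e^{2B}]$, so $|e^x-e^y|\le e^{4B}|x-y|$ on that range. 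This gives \eqref{eq:Rfactor} with a constant depending only on $A_i,z_i$, uniformly in $p$ and $r$.
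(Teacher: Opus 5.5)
Your proposal is correct and follows the paper's proof essentially step for step. The paper integrates $\partial_{s_1s_2}\Phi_p$ over $[a,z_1]\times[a,z_2]$ with the dominating bound $1/(4s_1s_2)$; this is exactly your integration in $h=\log s$ over $[-L,h_1]\times[-L,h_2]$ with bound $1/4$, with $a=e^{-L}$. The paper then takes the same limits in the same order (fine cut first, then removal of the truncation) and converts $\Phi_p$ to the densities by the same summation by parts. Two slips do not affect the argument: the corner term $\Phi_p(-L,-L)$ belongs among your boundary terms, and since $\Phi_r=0$ the coefficient mass is at most $\zeta_{r-1}\le1$, not $2$.
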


\begin{proof}
\emph{From the covariance to the mixed derivative.}
Choose $M_i$ with $0<A_i\le M_i$, and let $(s_1,s_2)$
denote the varying parameters in the rectangle
$0\le s_i\le z_i$. The three terminal fields on this
rectangle are bounded in absolute value by
\[
 \overline B=\log\bigl((1+z_1M_1)(1+z_2M_2)\bigr).
\]
Fix a positive pair $(s_1,s_2)$ and put $h_i=\log s_i$.
For the two-site law at these parameters, conditional
Jensen and \eqref{eq:densitybound} give
\[
 \|\E^{12}_{p\to j}C_j\|_{2,\mathrm{prod}}^2
 \le\int C_j^2\rho_{p:j}^{12}\,d\omega_{\le j}
 \le e^{2\overline B}\|C_j\|_{2,\mathrm{prod}}^2.
\]
The coefficient bound in Lemma \ref{lem:hessian}
therefore implies
\[
 \sup_{p<r}\|\partial_{h_1h_2}X_p^{12}\|_{2,\mathrm{prod}}^2
    \le\frac{e^{2\overline B}}{16}
                    \sup_{j<r}\|C_j\|_{2,\mathrm{prod}}^2
        \longrightarrow0
\]
by Proposition \ref{prop:prescribedcov}. Equivalently,
\[
 \partial_{s_1s_2}\Phi_p
    =\frac{\partial_{h_1h_2}X_p^{12}}{s_1s_2}
\]
is small in $L^2$, uniformly in $p$, at every fixed
positive pair of parameters. Since $|C_j|\le1$,
Lemma \ref{lem:hessian} also gives the pointwise bound
\[
 |\partial_{s_1s_2}\Phi_p|\le\frac1{4s_1s_2}.
\]

\emph{Integrating the mixed derivative.}
When one parameter is zero, its terminal field is zero
and the two-site recursion becomes the other one-site
recursion. Hence
$
 \Phi_p(s_1,0)=\Phi_p(0,s_2)=0.
$
These are identities of the recursion, so no prescribing
event at zero is needed. Moreover,
$|\partial_{s_i}\Phi_p|\le M_i$: the derivatives of
$X_p^{12}$ and $X_p^i$ are both tilted averages of
$A_i/(1+s_iA_i)\in[0,M_i]$.

To stay away from zero while using the mixed-derivative
estimate, fix $0<a<\min(z_1,z_2)$. The fundamental
theorem of calculus gives
\begin{align*}
 \Phi_p(z_1,z_2)
  ={}&\Phi_p(z_1,a)+\Phi_p(a,z_2)-\Phi_p(a,a)+\int_a^{z_1}\!\!\int_a^{z_2}
             \partial_{s_1s_2}\Phi_p(s_1,s_2)\,ds_2\,ds_1.
\end{align*}
The three boundary terms have total absolute value at
most $2(M_1+M_2)a$. Minkowski's inequality thus yields
\begin{align*}
 \sup_{p\le r}\|\Phi_p(z_1,z_2)\|_{2,\mathrm{prod}}
 \le{}&2(M_1+M_2)a+\int_a^{z_1}\!\!\int_a^{z_2}
       \sup_{p<r}\|\partial_{s_1s_2}\Phi_p(s_1,s_2)\|_{2,\mathrm{prod}}
                   \,ds_2\,ds_1.
\end{align*}
For fixed $a$, dominated convergence applies with the
bound $1/(4s_1s_2)$, so the integral tends to zero.
Each positive pair $(s_1,s_2)$ uses its own event in
Proposition \ref{prop:prescribedcov}; pointwise convergence
suffices for this argument. First take the fine-cut
limit, then let $a\downarrow0$. This proves
\eqref{eq:Phismall}.

\emph{From the recursions to the densities.}
Apply \eqref{eq:abel} to the joint density and to each
one-site density. Since $\Phi_r=0$,
\[
 \log\rho_{p:r}^{12}-\log\rho_{p:r}^1-\log\rho_{p:r}^2
   =-\zeta_p\Phi_p
      -\sum_{j=p+1}^{r-1}(\zeta_j-\zeta_{j-1})\Phi_j.
\]
The absolute coefficient mass is at most one. Each
$\Phi_j$ depends only on the prefix through $j$, so its
$L^2$ norm is unchanged by integrating the remaining
path coordinates. It follows from \eqref{eq:Phismall}
that the left side tends to zero in $L^2$, uniformly
in $p$. The densities and their logarithms are bounded
by \eqref{eq:densitybound}; exponentiating therefore
proves \eqref{eq:Rfactor}.
\end{proof}

We now have both the small covariance
\eqref{eq:fixedcov} and the factorization
\eqref{eq:Rfactor} for prescribed fields. The latter
separates the two site contributions while keeping
their common state variables. The next section uses
these two estimates together to control a conditional
cavity output with its future state path left unaveraged.

\section{One-site control and the marked-replica expansion}\label{sec:response}

\subsection{From two-site estimates to one-site control}\label{sec:onesite}

We now combine the density factorization of Proposition
\ref{prop:factorization} with the two-site covariance bound of
Proposition \ref{prop:prescribedcov} to control one cavity output at a time. We will now also choose a special prescribed clause that will be utilized in the following sections. Let $m\ge1$ be the number of input sites, let
$L:[0,1]^m\to\mathbb R$ be multiaffine with
$\|L\|_\infty\le s<1/4$, and put
\[
 A=1+2L.
\]
Then $A>0$, so we may use this polynomial in \eqref{eq:probe}
with unary parameter $z=1$. Writing
$\boldsymbol b=(b^1,\ldots,b^m)$ for the input magnetizations,
the multiplier and output are
\begin{equation}\label{eq:linearprobe}
 \Av_\varsigma e^{\vartheta_*(\boldsymbol b,\varsigma)}
       =\frac{1+A}{2}=1+L,
 \qquad
 \xi^L=\frac{A-1}{A+1}=\frac{L}{1+L}.
\end{equation}
Here and below, $L$ along a path means
$L(S_1,\ldots,S_m)$, where $S_i=(1+b^i)/2$.
The function $L$ may have either sign; it is $A=1+2L$
that must be positive.

Fix $p<r$. Along one canonical path, write
\[
 U_{\le p}=(U_0,\ldots,U_p),\qquad
 U=(U_{p+1},\ldots,U_r).
\]
Thus $U$ is the vector of future state variables used in
Section \ref{sec:geometry}, the one-path case of $U_T$.
For the $m$ input sites, write
\[
 Z_{\le p}=(Z_j^i)_{0\le j\le p,\,i\le m},\qquad
 Z_{>p}=(Z_j^i)_{p<j\le r,\,i\le m}.
\]
Use the recursion \eqref{eq:Xrec} with terminal field
$X_r^L=\log(1+L)$, and denote its path density by
$\rho_{p:r}^L$. The terminal field $\log(1+A)$ used in
Section \ref{sec:probes} differs by the constant $\log2$,
so it gives the same path density.

Average the future site variables, keeping $U$ and all
prefixes fixed:
\begin{equation}\label{eq:conditionaloutput}
 \begin{aligned}
 a_p^L(U,Z_{\le p})
    &=\int\rho_{p:r}^L\,dZ_{>p},\\
 d_p^L(U,Z_{\le p})
    &=\int\rho_{p:r}^L\xi^L\,dZ_{>p}
      =\int\rho_{p:r}^L\frac{L}{1+L}\,dZ_{>p}.
 \end{aligned}
\end{equation}
The dependence on $U_{\le p}$ is suppressed. Thus $a_p^L$
is the marginal density of the future state path, and
$d_p^L$ is the density-weighted output $\xi^L$ after the same site
averaging. Normalization gives $\E_Ua_p^L=1$. The norm $\|\cdot\|_p$ below uses the entire input-site prefix $Z_{\le p}$ as its site variable.

\begin{proposition}[Covariance control of the conditional cavity output]
\label{prop:response}
For every fixed multiaffine $L$ with $\|L\|_\infty<1/4$,
\begin{equation}\label{eq:outputsmall}
 \sup_{p<r}\|a_p^L-1\|_p\longrightarrow0,
 \qquad
 \sup_{p<r}\|\Pi_p d_p^L\|_p\longrightarrow0
\end{equation}
as $\delta,\eta\to0$, with no restriction on the depth $r$.
\end{proposition}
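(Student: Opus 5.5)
The plan is to realize the two copies $Z$ and $Z'$ in the covariance norm \eqref{eq:Xnorm} as the site prefixes of \emph{two} cavity sites carrying the same prescribed clause. Concretely, I would take $A_1=A_2=A=1+2L$ on disjoint input sites, with $z_1=z_2=1$, and apply Propositions \ref{prop:prescribedcov} and \ref{prop:factorization}. The constant shift $\log 2$ does not change the path densities, so the one-site density $\rho^i_{p:r}$ is $\rho^L_{p:r}$ evaluated on the $i$th site's variables.

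Since both sites share the future state path $U$, integrating the product density over the future site variables gives
\[
 \int\rho^1_{p:r}\rho^2_{p:r}\,dZ^1_{>p}dZ^2_{>p}=a_p^L(U,Z)\,a_p^L(U,Z'),
 \qquad Z=Z^1_{\le p},\ Z'=Z^2_{\le p}.
\]
All functions are bounded by constants depending only on $L$: by \eqref{eq:densitybound}, $|\log(1+L)|\le\log(4/3)$, so all densities lie in $[e^{-2B},e^{2B}]$, and $|\xi^L|\le1$. By conditional Jensen, \eqref{eq:Rfactor} then implies that
\[
 \epsilon_1:=\sup_{p<r}\Bigl\|\int|\rho^{12}_{p:r}-\rho^1_{p:r}\rho^2_{p:r}|\,d\omega_{>p}\Bigr\|_{2,\mathrm{prod}}\to0,
\]
with the norm taken over $(U_{\le p},Z,Z')$ under the product law. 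This is exactly the averaging in $\|\cdot\|_p$.

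\emph{First claim.} The joint density integrates to one, so $\int a(U,Z)a(U,Z')\,dU=1+O(\epsilon_1)$ in $L^2(dU_{\le p}\,dZ\,dZ')$. Because $\E_U a=1$, the left side minus one equals $\int(a-1)(U,Z)(a-1)(U,Z')\,dU$. By \eqref{eq:Xnorm}, this gives $\|a_p^L-1\|_p^4\le\epsilon_1^2\to0$.

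\emph{Second claim.} I would apply the same integration to $\xi_1^*\xi_2^*$ and to each $\xi_i^*$ separately. This gives, up to $O(\epsilon_1)$ errors in $L^2$,
\[
 \E^{12}_{p\to r}(\xi_1^*\xi_2^*)\approx\int d(U,Z)d(U,Z')\,dU,\qquad
 \E^{12}_{p\to r}\xi_1^*\approx\int d(U,Z)a(U,Z')\,dU,
\]
and symmetrically for $\xi_2^*$. Write $\bar d=\E_U d$. Then
\[
 \int d(U,Z)a(U,Z')\,dU=\bar d(Z)+\int \Pi_pd(U,Z)\,(a-1)(U,Z')\,dU .
\]
To control the last term I would use the bilinear Cauchy--Schwarz bound
\[
 \iint\Bigl|\int f(U,Z)g(U,Z')\,dU\Bigr|^2dZ\,dZ'\le\|f\|_p^2\|g\|_p^2 .
\]
This follows from the Fubini identity \eqref{eq:gramfubini}, which rewrites the left side as $\iint K_fK_g\,dU\,dU'$ with Gram kernels $K_f(U,U')=\int f(U,Z)f(U',Z)\,dZ$, after which Cauchy--Schwarz applies. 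Here $\|\Pi_p d\|_p$ is bounded, and the first claim makes $\|a-1\|_p$ small, so the last term is small in $L^2$.

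Subtracting the product of the means, which are bounded, from the mixed moment leaves
\[
 \int\Pi_pd(U,Z)\,\Pi_pd(U,Z')\,dU=C_p^*+o(1)\quad\text{in }L^2 .
\]
Proposition \ref{prop:prescribedcov} gives $\|C_p^*\|_{2,\mathrm{prod}}\to0$ uniformly in $p$. The left side is the kernel in \eqref{eq:Covnorm}, so $\|\Pi_pd_p^L\|_p\to0$.

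I expect the main obstacle to be the error bookkeeping. The mean terms involve a mixed pairing of $d$ at one prefix with $a$ at the other, and replacing it by $\bar d$ is only possible after the first claim is established and the bilinear bound is available. All errors must also be measured under the product prefix law, which is what Propositions \ref{prop:prescribedcov} and \ref{prop:factorization} provide, with constants independent of $r$.
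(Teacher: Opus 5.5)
Your proposal is correct and follows essentially the same route as the paper. The paper also uses two copies of the prescribed clause sharing one state path, obtains $\E_U a^\circ_Z a^\circ_{Z'}=\E_{\mathrm{fut}}[\rho^1\rho^2-\rho^{12}]$ from normalization, and removes the density factors from the mean terms with the mixed-kernel bound of Lemma \ref{lem:normfacts}, exactly as you outline.
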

\begin{proof}
Fix $L$. Take two copies of the prescribed clause, with
independent input-site arrays and the same state path. Write
$\rho^{12},\rho^1,\rho^2$ for the joint and separate path
densities from $p$ to $r$, and let $\xi_1,\xi_2$ be the
corresponding outputs in \eqref{eq:linearprobe}.
Suppress $U_{\le p}$ and abbreviate the two retained site
prefixes by $Z,Z'$. Set
\[
 a_Z(U)=a_p^L(U,Z),\qquad d_Z(U)=d_p^L(U,Z),
\]
and center in the future state variables:
\[
 a_Z^\circ=a_Z-1,\qquad
 \bar d_Z=\E_Ud_Z,\qquad d_Z^\circ=d_Z-\bar d_Z.
\]
The densities and all these functions are bounded uniformly
in the cut and in $p$. Indeed,
$|\log(1+L)|\le-\log(1-s)$, so \eqref{eq:densitybound}
applies to the separate and joint densities, while
$|\xi^L|\le s/(1-s)$. Write
\[
 \E_{\mathrm{fut}}=\E_{U,Z_{>p},Z_{>p}'}
\]
for expectation under the original product law of the future
variables, with $U_{\le p},Z,Z'$ fixed. Below, $L^2$ without
further qualification averages $U_{\le p},Z,Z'$.

\emph{Expressing the two-site estimate through one-site quantities.}
The covariance controlled by \eqref{eq:fixedcov} is
\[
 C_p=\E_{\mathrm{fut}}[\rho^{12}\xi_1\xi_2]
       -\E_{\mathrm{fut}}[\rho^{12}\xi_1]
        \E_{\mathrm{fut}}[\rho^{12}\xi_2].
\]
We can use factorization in \eqref{eq:Rfactor} to replace $\rho^{12}$ by $\rho^1\rho^2$ in each expectation. Integrating the
independent future site arrays first gives, respectively,
\[
 \E_Ud_Zd_{Z'},\qquad
 \E_Ud_Za_{Z'},\qquad
 \E_Ua_Zd_{Z'}.
\]
For example,
\[
 \E_{\mathrm{fut}}[\rho^1\rho^2\xi_1]
   =\E_U\left[
        \left(\E_{Z_{>p}}\rho^1\xi_1\right)
        \left(\E_{Z_{>p}'}\rho^2\right)\right]
   =\E_Ud_Za_{Z'}.
\]
By Jensen's inequality, boundedness, and \eqref{eq:Rfactor},
each replacement error tends to zero in $L^2$, uniformly
in $p$. The same holds for the error in the product of the
last two expectations. Combining this with \eqref{eq:fixedcov}
gives
\begin{equation}\label{eq:ddidentity}
 \mathcal E_p(Z,Z'):=\E_Ud_Zd_{Z'}
       -(\E_Ud_Za_{Z'})(\E_Ua_Zd_{Z'})
       \longrightarrow0
\end{equation}
in $L^2$, uniformly in $p$.

This is almost the desired covariance kernel of $d$, but
the two means still contain the density factors $a_Z,a_{Z'}$.
They remain because the two sites share the state path $U$.
We next show that their centered parts are small in
$\|\cdot\|_p$, and the mixed-kernel bound will then allow us to
remove them from \eqref{eq:ddidentity}.

\emph{Controlling the remaining state-path density.}
Normalization of the joint density gives
$\E_{\mathrm{fut}}\rho^{12}=1$, whereas independence of
the future site arrays gives
$\E_{\mathrm{fut}}[\rho^1\rho^2]=\E_Ua_Za_{Z'}$.
Since $\E_Ua_Z=\E_Ua_{Z'}=1$, it follows that
\[
 K_a(Z,Z'):=\E_Ua_Z^\circ a_{Z'}^\circ
      =\E_Ua_Za_{Z'}-1
      =\E_{\mathrm{fut}}[\rho^1\rho^2-\rho^{12}].
\]
Thus the covariance kernel of the marginal density is
itself controlled by the factorization error. Jensen's
inequality and \eqref{eq:Rfactor} yield
\[
 \|a^\circ\|_p^4
   =\E_{U_{\le p},Z,Z'}|K_a(Z,Z')|^2
   \le\E|\rho^1\rho^2-\rho^{12}|^2
   \longrightarrow0
\]
uniformly in $p$. The expectation on the right also averages
all future variables. This proves the first assertion of
\eqref{eq:outputsmall}.

\emph{Removing the density factors from the output estimate.}
Put
\[
 K_{da}(Z,Z')=\E_Ud_Z^\circ a_{Z'}^\circ,
 \qquad
 K_{ad}(Z,Z')=\E_Ua_Z^\circ d_{Z'}^\circ.
\]
The mixed-kernel bound of Lemma \ref{lem:normfacts} gives
\begin{equation}\label{eq:crosskernel}
 \E|K_{da}|^2
      \le\|d^\circ\|_p^2\|a^\circ\|_p^2
      \longrightarrow0;
\end{equation}
the same holds for $K_{ad}$. This uses only boundedness of
$d^\circ$ and the estimate just proved for $a^\circ$.
Since the centered functions have zero mean in $U$,
\[
 \E_Ud_Za_{Z'}=\bar d_Z+K_{da},\qquad
 \E_Ua_Zd_{Z'}=\bar d_{Z'}+K_{ad}.
\]
Substitution into \eqref{eq:ddidentity} now identifies the
desired covariance kernel:
\[
 K_d(Z,Z'):=\E_Ud_Z^\circ d_{Z'}^\circ
   =\mathcal E_p+\bar d_ZK_{ad}
                    +\bar d_{Z'}K_{da}+K_{da}K_{ad}.
\]
Every term on the right tends to zero in $L^2$, uniformly
in $p$; for the product term, use boundedness of the
mixed kernels. Hence
$\|d^\circ\|_p^4=\E|K_d|^2\to0$, proving the second
assertion of \eqref{eq:outputsmall}.
\end{proof}

\subsection{The marked-replica expansion}\label{sec:spine}

We next express $d_p^L$ in \eqref{eq:conditionaloutput} through replica moments. The point of the particular output choice of the form $L/(1+L)$ is that its denominator cancels the multiplier when a state is sampled from the reweighted cascade. Expanding the
remaining normalization produces the replica moments.

Fix a vertex $\beta\in\N^p$. For a terminal label
$\alpha$ descending from $\beta$, write
\[
 L^\alpha=L(S_1^\alpha,\ldots,S_m^\alpha).
\]
The state prefix is $U_{\le p}$, and the site prefix
contains all $m$ inputs,
$
 Z_{\le p}=(Z_\gamma^i)_{\gamma\in p(\beta),\,i\le m}.
$
For $n\ge1$, choose terminal labels
$\alpha_1,\ldots,\alpha_n$ realizing a shape $T$ of
the auxiliary cascade rooted at $\beta$. Define
\begin{equation}\label{eq:probevariables}
 \begin{aligned}
 V_T&=\bigcup_{\ell=1}^n
             \bigl(p(\alpha_\ell)\setminus p(\beta)\bigr),\qquad
 U_T=(U_\gamma)_{\gamma\in V_T},\qquad
 Z_T=(Z_\gamma^i)_{\gamma\in V_T,\,i\le m}.
 \end{aligned}
\end{equation}
The corresponding future moment is
\begin{equation}\label{eq:FTprobe}
 F_T^{p,L}(U_{\le p},U_T,Z_{\le p})
      =\int\prod_{\ell=1}^n L^{\alpha_\ell}\,dZ_T.
\end{equation}
Here $dZ_T$ is the product uniform measure, with one
coordinate for each distinct vertex and each input site.
Replicas share the coordinates on their common paths;
if terminal labels repeat, the same factor $L^\alpha$
occurs repeatedly. All state variables and all site
prefixes remain unaveraged. This is the construction
of \eqref{eq:FT}, with $L$ in place of $S$.

The function $d_p^L$ retains only one future state path,
whereas $F_T^{p,L}$ depends on all of $U_T$. To relate them, choose the realizations of all shapes with replica $1$ on the same
fixed descendant path of $\beta$, and write
\[
 V_1=p(\alpha_1)\setminus p(\beta),\qquad
 U=(U_\gamma)_{\gamma\in V_1}
       =(U_{p+1},\ldots,U_r).
\]
For a function $F=F(U_{\le p},U_T,Z_{\le p})$, define
\begin{equation}\label{eq:SpT}
 (\Sp_TF)(U_{\le p},U,Z_{\le p})
     =\int F(U_{\le p},U_T,Z_{\le p})
                    \prod_{\gamma\in V_T\setminus V_1}dU_\gamma.
\end{equation}
Only the state coordinates outside replica $1$'s path
are integrated. All these integrals use the original
product uniform law. Consequently every
$\Sp_TF_T^{p,L}$ is a function of the same variables
$(U_{\le p},U,Z_{\le p})$, and we can sum them over shapes.

\begin{lemma}[Marked-replica expansion]\label{lem:spinal}
Let $\|L\|_\infty\le s<1/4$. For every integer $k\ge1$,
\begin{equation}\label{eq:spinal}
 d_p^L
   =\sum_{n=1}^k(-1)^{n-1}
       \sum_T\mu_n^p(T)\,\Sp_TF_T^{p,L}
       +R_{p,k}^L,
\end{equation}
where the inner sum is over shapes of $n$ replicas, and
\begin{equation}\label{eq:spinalremainder}
 \|R_{p,k}^L\|_\infty\le\frac{s^{k+1}}{1-s}.
\end{equation}
\end{lemma}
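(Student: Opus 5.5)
The plan is to identify $d_p^L$ as the expected output of one marked terminal label sampled from the reweighted cascade below $\beta$, and then expand the normalizing factor in a geometric series. The key observation is that, conditionally on the prefix through $p$, the law with density $\rho_{p:r}^L$ on one descendant path is the law of the path of a label $\alpha$ sampled from the RPC below $\beta$ (weights $V_\alpha$ of the auxiliary cascade rooted at $p$), reweighted by $1+L^\alpha$ and normalized. This is the hierarchical Bolthausen--Sznitman invariance of Section \ref{sec:changeofmeasure}, read in reverse: the changed single-path law equals the law of the path of a label drawn from $\widetilde V_\alpha=V_\alpha(1+L^\alpha)/\sum_\gamma V_\gamma(1+L^\gamma)$. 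Averaging the future site variables as in \eqref{eq:conditionaloutput} while keeping the marked state path $U$ fixed, I would therefore write
\[
 d_p^L(U,Z_{\le p})=\E\Bigl[\frac{V_{\alpha_1}(1+L^{\alpha_1})}{\sum_\gamma V_\gamma(1+L^\gamma)}\cdot\frac{L^{\alpha_1}}{1+L^{\alpha_1}}\Bigm| \alpha_1\text{ on the path }U\Bigr],
\]
understood as a sum over $\alpha_1$ with its state path identified with $U$. The factor $1+L^{\alpha_1}$ cancels, which is the reason for the choice $\xi^L=L/(1+L)$, leaving $V_{\alpha_1}L^{\alpha_1}/(1+\sum_\gamma V_\gamma L^\gamma)$, since $\sum_\gamma V_\gamma=1$.

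Next I would expand, using $|\sum_\gamma V_\gamma L^\gamma|\le s<1/4$,
\[
 \frac{1}{1+\sum_\gamma V_\gamma L^\gamma}=\sum_{j=0}^{k-1}(-1)^j\Bigl(\sum_\gamma V_\gamma L^\gamma\Bigr)^j+\text{remainder},
\]
so that the $n$th term, with $n=j+1$, equals $(-1)^{n-1}\sum_{\alpha_2,\dots,\alpha_n}V_{\alpha_1}\cdots V_{\alpha_n}\prod_\ell L^{\alpha_\ell}$. I would group the label tuples by shape $T$. Since the weights are independent of the represented array and every tuple of a given shape has the same joint law of variables, taking the expectation over the weights produces $\mu_n^p(T)$. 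Integrating the future site variables gives $F_T^{p,L}$. Because replica $1$ is pinned to the path $U$, only the state coordinates off that path are integrated, which is exactly $\Sp_T$. The remainder is $V_{\alpha_1}L^{\alpha_1}(-x)^k/(1+x)$ with $x=\sum_\gamma V_\gamma L^\gamma$. After summing over $\alpha_1$ in the normalized sense, it is bounded by $s\cdot s^k/(1-s)$, which gives \eqref{eq:spinalremainder}.

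The main obstacle is the first step. It requires a rigorous statement that the one-path marginal of the changed cascade, conditioned on the prefix through $p$, is generated by the density $\rho_{p:r}^L$. It also requires that "conditioning on the marked path $U$" makes sense as a disintegration, so that the sum over $\alpha_1$ of $V_{\alpha_1}\times(\cdot)$ becomes a density in $U$ with respect to the product law. I would prove this by induction on depth using the recursion \eqref{eq:Xrec}. At each level, the mean over the Poisson--Dirichlet weights of children of $V_{\text{child}}\,e^{\zeta_j X_{j+1}}$-type quantities is computed via the standard RPC identity $\E\sum_\alpha V_\alpha f(\text{path}_\alpha)=\E f(\text{uniform path})$, extended to the tilted normalization using Bolthausen--Sznitman. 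Once this identity is established, the expansion and the shape bookkeeping are routine.
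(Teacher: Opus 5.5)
Your proposal is correct and takes essentially the same route as the paper. The change-of-density invariance identifies $d_p^L$ with the output of a marked label drawn from the cascade reweighted by $1+L^\alpha$, so the factor $1+L^{\alpha_1}$ cancels. The finite geometric expansion of $1/(1+\langle L\rangle_v)$, grouped by shapes, then produces $\mu_n^p(T)\,\Sp_TF_T^{p,L}$, with a remainder bounded by $s^{k+1}/(1-s)$.

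The disintegration obstacle you flag needs no separate induction on depth. The paper pairs everything against an arbitrary bounded test function $f(U)$ of the marked state path, so the invariance of Section \ref{sec:changeofmeasure} applies directly to $f(U)\xi^L$. The pointwise identity and the sup-norm bound on $R_{p,k}^L$ then follow by duality from $\E_{\mathrm{aux}}\langle|f|\rangle_v=\int|f(U)|\,dU$.
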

\begin{proof}
Keep $U_{\le p}$ and $Z_{\le p}$ fixed. On the descendant
tree of $\beta$, take auxiliary RPC weights
$v=(v_\alpha)$ with parameters
$\zeta_p,\ldots,\zeta_{r-1}$, independently of the uniform
arrays. Write $\E_{\mathrm{aux}}$ for expectation over
these weights and all state and site variables at
vertices strictly descending from $\beta$.
For a terminal label $\alpha$, its future state path is
$$
 U^\alpha=(U_\gamma)_{\gamma\in
                        p(\alpha)\setminus p(\beta)}.
$$
Let $f$ be any bounded measurable function of this path,
and set
\[
 \langle L\rangle_v=\sum_\alpha v_\alpha L^\alpha,
 \qquad
 \langle Lf\rangle_v
      =\sum_\alpha v_\alpha L^\alpha f(U^\alpha).
\]

\emph{An exact pairing identity.}
Reweight the cascade by its multiplier $1+L^\alpha$:
\[
 \widetilde v_\alpha
     =\frac{v_\alpha(1+L^\alpha)}{1+\langle L\rangle_v}.
\]
In the weighted output the factor $1+L^\alpha$ cancels,
so
\[
 \sum_\alpha\widetilde v_\alpha
            \frac{L^\alpha f(U^\alpha)}{1+L^\alpha}
       =\frac{\langle Lf\rangle_v}{1+\langle L\rangle_v}.
\]
The change-of-density invariance of
Section \ref{sec:changeofmeasure} says that the expectation
of the left side is the expectation along one path with
density $\rho_{p:r}^L$. Hence
\begin{align}
 \E_{\mathrm{aux}}
       \frac{\langle Lf\rangle_v}{1+\langle L\rangle_v}
   &=\iint f(U)\frac{L}{1+L}\rho_{p:r}^L
                                    \,dZ_{>p}\,dU
                                    =\int f(U)d_p^L(U,Z_{\le p})\,dU.
       \label{eq:spinalpairing}
\end{align}

\emph{Expanding the normalization.}
Since $|\langle L\rangle_v|\le s<1$, the finite geometric
identity gives
\begin{equation}\label{eq:finitegeometric}
 \frac{\langle Lf\rangle_v}{1+\langle L\rangle_v}
   =\sum_{n=1}^k(-1)^{n-1}
           \langle Lf\rangle_v\langle L\rangle_v^{n-1}
     +(-1)^k
       \frac{\langle Lf\rangle_v\langle L\rangle_v^k}
            {1+\langle L\rangle_v}.
\end{equation}
The term indexed by $n$ is an $n$-replica moment:
\[
 \langle Lf\rangle_v\langle L\rangle_v^{n-1}
    =\sum_{\alpha_1,\ldots,\alpha_n}
       \Bigl(\prod_{\ell=1}^n v_{\alpha_\ell}\Bigr)
       f(U^{\alpha_1})\prod_{\ell=1}^nL^{\alpha_\ell}.
\]
For a tuple of shape $T$, integrate its future site
variables first, and then the state variables outside
replica $1$'s path. The definition \eqref{eq:SpT} gives
\[
 \iint f(U^{\alpha_1})\prod_{\ell=1}^nL^{\alpha_\ell}
                                         \,dZ_T\,dU_T
     =\int f(U)(\Sp_TF_T^{p,L})(U,Z_{\le p})\,dU.
\]
The weights are independent of the uniform arrays, and
their contribution for this shape is
\[
 \E_v\sum_{\boldsymbol\alpha:T(\boldsymbol\alpha)=T}
                  \prod_{\ell=1}^n v_{\alpha_\ell}
       =\mu_n^p(T).
\]
Grouping by shapes therefore identifies the first $k$
terms of \eqref{eq:finitegeometric} with the sum in
\eqref{eq:spinal}, paired against $f$.

\emph{A uniform bound on the remainder.}
Let $R_{p,k}^L$ be the difference in \eqref{eq:spinal}.
By \eqref{eq:spinalpairing} and \eqref{eq:finitegeometric},
\[
 \int f(U)R_{p,k}^L(U,Z_{\le p})\,dU
    =(-1)^k\E_{\mathrm{aux}}
       \frac{\langle Lf\rangle_v\langle L\rangle_v^k}
            {1+\langle L\rangle_v}.
\]
We have $|\langle Lf\rangle_v|\le s\langle|f|\rangle_v$.
Also, a label sampled from the original weights has the
original product uniform law for its state path: the
weights are independent of the arrays and sum to one.
Thus
\[
 \E_{\mathrm{aux}}\langle|f|\rangle_v
      =\int|f(U)|\,dU,
\]
and consequently
\[
 \left|\int f(U)R_{p,k}^L(U,Z_{\le p})\,dU\right|
      \le\frac{s^{k+1}}{1-s}\int|f(U)|\,dU.
\]
Since this holds for every bounded measurable $f$, it
implies \eqref{eq:spinalremainder}. The bound is independent
of the fixed prefixes, so it holds in the full product
space as well.
\end{proof}

We record how the state averages interact with centering.
With the prefixes fixed, integrating the path $U$ after
the coordinates outside it integrates all of $U_T$.
It follows that
\begin{equation}\label{eq:commute}
 \Sp_T\Pi_pF=\Pi_p\Sp_TF.
\end{equation}
On the left $\Pi_p$ centers in $U_T$; on the right it
centers in $U$. Moreover, $\Sp_T$ contracts the norm by
the conditional-expectation property in
Section \ref{sec:kernels}.
For later use, $\Sp_TH_T^p=H_T^p$: the common-stem
moment \eqref{eq:HT} depends only on state coordinates
along the common stem, all of which belong to replica
$1$'s path.

\subsection{Extracting a mixed coefficient}\label{sec:mixedcoeff}

We will now further specialize the choice of the multiaffine function $L$ in the previous sections. Fix $k\ge1$ and multiaffine functions
\begin{equation}
 P_v:[0,1]^m\to[-1,1],\qquad 1\le v\le k,
\label{eq:Pvmixed}
\end{equation}
all using the same $m$ input sites. For
$\mathbf t=(t_1,\ldots,t_k)$ with
$\sum_v|t_v|<1/4$, set
\begin{equation}\label{eq:mixedprobe}
 L_{\mathbf t}=\sum_{v=1}^k t_vP_v,
 \qquad A_{\mathbf t}=1+2L_{\mathbf t},
 \qquad z=1.
\end{equation}
Putting a separate parameter on each term will allow us to generate arbitrary multioverlap moments in the next two sections.

Since $|P_v|\le1$, we have
$\|L_{\mathbf t}\|_\infty\le\sum_v|t_v|<1/4$, so
$A_{\mathbf t}>1/2$ even when the factors are signed.
All parameter choices use the same site arrays, including
at values where a parameter is zero and some coordinates
are unused. Proposition \ref{prop:response} applies to
every fixed choice of $\mathbf t$.
For a shape $T$ of $n$ replicas, the future multioverlap
$$
 F_T^{p,L_{\mathbf t}}(U_{\le p},U_T,Z_{\le p})
      =\int\prod_{\ell=1}^n L_{\mathbf t}^{\alpha_\ell}\,dZ_T.
$$
is homogeneous of total degree $n$ in $\mathbf t$.
When $n=k$, the coefficient of $t_1\cdots t_k$ assigns
each $P_v$ to one replica, with every replica used once.
Let $\mathfrak S_k$ denote the set of permutations of
$\{1,\ldots,k\}$, and write $\pi(v)$ for the replica
assigned to the factor $P_v$. Thus, the coefficient is
\begin{equation}\label{eq:mixedmoment}
 G_T^p
   :=\int\sum_{\pi\in\mathfrak S_k}
               \prod_{v=1}^k P_v^{\alpha_{\pi(v)}}\,dZ_T.
\end{equation}
Here,
$P_v^\alpha=P_v(S_1^\alpha,\ldots,S_m^\alpha)$.
All input-site prefixes remain as arguments of $G_T^p$.

\begin{proposition}[Mixed replica tests]\label{prop:mixedcoeff}
For every fixed collection $P_1,\ldots,P_k$ as above,
\begin{equation}\label{eq:testedsum}
 \sup_{p<r}\left\|
    \sum_T\mu_k^p(T)\,\Sp_T\Pi_pG_T^p
                  \right\|_p\longrightarrow0.
\end{equation}
The norm uses the full tuple of the $m$ input-site prefixes.
\end{proposition}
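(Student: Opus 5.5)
The plan is to combine Proposition \ref{prop:response} with the marked-replica expansion of Lemma \ref{lem:spinal}, and then extract the coefficient of $t_1\cdots t_k$ by finite differences in $\mathbf t$.

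First fix $\mathbf t$ with $\sum_v|t_v|\le s<1/4$. Apply $\Pi_p$ to \eqref{eq:spinal} with $L=L_{\mathbf t}$ and expansion order $k$. Using \eqref{eq:commute}, this gives
\[
 \Pi_p d_p^{L_{\mathbf t}}
   =\sum_{n=1}^k(-1)^{n-1}\sum_T\mu_n^p(T)\,\Sp_T\Pi_pF_T^{p,L_{\mathbf t}}
     +\Pi_pR_{p,k}^{L_{\mathbf t}} .
\]
By Proposition \ref{prop:response}, the left side tends to zero in $\|\cdot\|_p$, uniformly in $p$. The remainder has sup norm at most $s^{k+1}/(1-s)$. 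By the contraction of centering and \eqref{eq:normconversion}, its $\|\cdot\|_p$ norm is at most a constant times $s^{k+1}$, uniformly in the cut and in $p$.

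Next write the sum over $n$ as a polynomial in $\mathbf t$,
\[
 Q_{p}(\mathbf t)=\sum_{n=1}^k(-1)^{n-1}\sum_T\mu_n^p(T)\,\Sp_T\Pi_pF_T^{p,L_{\mathbf t}} .
\]
This polynomial has degree at most $k$. Its homogeneous part of degree $n$ comes only from the $n$-replica shapes. The coefficient of $t_1\cdots t_k$ occurs only at $n=k$, and there, by \eqref{eq:mixedmoment} and linearity of $\Sp_T\Pi_p$, it equals $(-1)^{k-1}\sum_T\mu_k^p(T)\Sp_T\Pi_pG_T^p$. This coefficient is exactly what the proposition asks us to control.

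To isolate it, use the inclusion--exclusion identity. For a polynomial $Q$ of degree at most $k$ in $k$ variables and a scale $\tau>0$,
\[
 [t_1\cdots t_k]Q=\tau^{-k}\sum_{\epsilon\in\{0,1\}^k}(-1)^{k-|\epsilon|}Q(\tau\epsilon).
\]
This holds because every monomial other than $t_1\cdots t_k$ misses some variable and therefore cancels in the alternating sum.

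Choose $\tau$ with $k\tau<1/4$ and set $s=k\tau$. Each evaluation point $\tau\epsilon$ is a fixed admissible parameter, so
\[
 \|Q_p(\tau\epsilon)\|_p\le\|\Pi_pd_p^{L_{\tau\epsilon}}\|_p+C(k\tau)^{k+1}
\]
by the triangle inequality. The triangle inequality for $\|\cdot\|_p$ (Lemma \ref{lem:normfacts}) then bounds the norm of the mixed coefficient by
\[
 \tau^{-k}2^k\Bigl(\max_\epsilon\sup_{p<r}\|\Pi_pd_p^{L_{\tau\epsilon}}\|_p+C(k\tau)^{k+1}\Bigr).
\]

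Now take the limit. For fixed $\tau$ there are only finitely many parameter choices, so the first term tends to zero along the cuts. The second term is of order $\tau^{k+1}\cdot\tau^{-k}=O(\tau)$. Letting the cut become fine first and then sending $\tau\downarrow0$ proves \eqref{eq:testedsum}.

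The main point to check carefully is that every constant is uniform in the cut, in $p$ and in $r$. The remainder bound from Lemma \ref{lem:spinal} is already uniform. The conversion \eqref{eq:normconversion} uses only the bound $|\cdot|\le 2s^{k+1}/(1-s)$, so it is uniform as well. One also has to confirm that the $t$-expansion is a genuine polynomial identity with the same site arrays at all evaluation points; this holds because $F_T^{p,L_{\mathbf t}}$ is multilinear in the $t_vP_v$ with coefficients independent of $\mathbf t$. No additional cavity input is needed beyond Proposition \ref{prop:response}.
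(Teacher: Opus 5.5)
Your proposal is correct and is essentially the paper's own proof. You center the marked-replica expansion, bound the remainder uniformly by $s^{k+1}/(1-s)$, apply Proposition~\ref{prop:response} at the finitely many points $\tau\mathbf e$ with $\mathbf e\in\{0,1\}^k$, isolate the coefficient of $t_1\cdots t_k$ by the alternating sum, and take the fine-cut limit before letting $\tau\downarrow0$, exactly as the paper does.
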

\begin{proof}
Write $s=\sum_v|t_v|<1/4$. Apply $\Pi_p$ to
Lemma \ref{lem:spinal} and use \eqref{eq:commute}:
\[
 \begin{aligned}
 Q_p(\mathbf t)
   &:=\sum_{n=1}^k(-1)^{n-1}
       \sum_T\mu_n^p(T)\,\Sp_T\Pi_pF_T^{p,L_{\mathbf t}}
 =\Pi_p d_p^{L_{\mathbf t}}
       -\Pi_p R_{p,k}^{L_{\mathbf t}}.
 \end{aligned}
\]
Since centering contracts the norm and
$\|R\|_p\le\|R\|_\infty$, the remainder bound and
Proposition \ref{prop:response} give, for every fixed
$\mathbf t$,
\begin{equation}\label{eq:mixedpolybound}
 \limsup_{\delta,\eta\to0}\sup_{p<r}
       \|Q_p(\mathbf t)\|_p
       \le \frac{s^{k+1}}{1-s}.
\end{equation}
This bound is uniform in the hierarchy depth.

We now extract the coefficient of $t_1\cdots t_k$ in \eqref{eq:mixedmoment} from this polynomial. Fix $0<\Delta<1/(4k)$.
A mixed difference kills every monomial missing
one of the parameters. Since $Q_p$ has total degree
at most $k$, only $t_1\cdots t_k$ survives, and its
normalized difference is one. Thus, writing
$|\mathbf e|=\sum_v e_v$, we have the exact identity
\begin{equation}\label{eq:mixedextraction}
 \frac1{\Delta^k}
   \sum_{\mathbf e\in\{0,1\}^k}
      (-1)^{k-|\mathbf e|}Q_p(\mathbf e \Delta)
   =(-1)^{k-1}
      \sum_T\mu_k^p(T)\,\Sp_T\Pi_pG_T^p.
\end{equation}
The sign comes from the degree-$k$ term in $Q_p$.

At each of the $2^k$ evaluation points,
$s\le k\Delta<1/4$, so \eqref{eq:mixedpolybound}
applies. The triangle inequality therefore gives
\[
 \limsup_{\delta,\eta\to0}\sup_{p<r}
 \left\|\sum_T\mu_k^p(T)\,\Sp_T\Pi_pG_T^p\right\|_p
 \le
 \frac{2^k}{\Delta^k}
       \frac{(k\Delta)^{k+1}}{1-k\Delta}
 =
 \frac{2^k k^{k+1}\Delta}{1-k\Delta}.
\]
Letting $\Delta\downarrow0$ after the fine-cut limit
proves \eqref{eq:testedsum}.
\end{proof}

This single mixed coefficient produces the $k$ replica
factors in \eqref{eq:mixedmoment}. The next section chooses
each $P_v$ to contain one factor at the site we want to
test and additional factors at sites assigned to graph edges.

\section{Weighted multioverlap tests}\label{sec:graphs}

We will now choose the factors $P_v$ so that these sums contain the future multioverlap $F_T^p$ multiplied by a function of the overlap matrix $q(T)$. Extra input sites produce pair moments of the magnetizations $b$, which approximate the pair overlaps directly. Their products give graph polynomials, and we then pass to continuous permutation-invariant tests. This is the role played by the parameter differentiations and overlap identities in \cite[Sections 8 and 9]{rRSB}.

\subsection{Graph polynomial tests}\label{sec:graphpoly}

Fix $k\ge2$. At a retained depth $p<r$, consider
terminal labels descending from a vertex $\beta\in\N^p$.
We apply Proposition \ref{prop:mixedcoeff} with one
input site, indexed by $0$, for the future multioverlap
$F_T^p$, and extra input sites for the pair moments
that will weight it.

Let $\Gamma$ be a graph on vertices $1,\ldots,k$.
Several edges may join the same two vertices, but no
edge joins a vertex to itself; this is a
\emph{loopless multigraph}. Write $E(\Gamma)$ for its
edges, counting repeated edges separately, and assign
each edge $e$ its own input site. At these sites use
the unshifted variable $b_e=2S_e-1$, and choose
\begin{equation}\label{eq:graphprobe}
 M_v=\prod_{e\ni v}b_e,\qquad P_v=S_0M_v,
 \qquad 1\le v\le k,
\end{equation}
where $P_v$ appeared in \eqref{eq:Pvmixed}.
The product is over the edges incident to $v$, with
an empty product equal to one. Each $P_v$ takes values
in $[-1,1]$ and is multiaffine in the input variables
$(S_0,(S_e)_e)$, since each occurs at most once in it. Thus these functions satisfy
the assumptions of Proposition \ref{prop:mixedcoeff}.
Its construction \eqref{eq:mixedprobe} uses one REM
clause with $|E(\Gamma)|+1$ input sites and a separate
cavity spin $\varsigma$, hence arity $|E(\Gamma)|+2$.
The factor $S_0$ produces the nonnegative moments used
in Section \ref{sec:recursion}, while the edge factors
produce unshifted pair moments.

For a symmetric $k\times k$ matrix $q$, define the graph polynomial
\begin{equation}\label{eq:graphpolynomial}
 \Psi_\Gamma(q)=\sum_{\pi\in\mathfrak S_k}
              \prod_{e=(v,w)\in E(\Gamma)}q_{\pi(v),\pi(w)}.
\end{equation}
The following holds.
\begin{proposition}[Graph polynomial tests]\label{prop:graphtests}
For every fixed loopless multigraph $\Gamma$ on
$\{1,\ldots,k\}$, as $\delta,\eta\to0$,
\begin{equation}\label{eq:graphqtests}
 \sup_{p<r}\Bigl\|
  \sum_T\mu_k^p(T)\Psi_\Gamma(q(T))\,
                       \Sp_T(\Pi_pF_T^p)\Bigr\|_p
                       \longrightarrow0,
\end{equation}
where $\Sp_T$ is the operator in \eqref{eq:SpT}.
\end{proposition}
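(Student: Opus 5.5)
Apply Proposition \ref{prop:mixedcoeff} with the factors $P_v=S_0M_v$ of \eqref{eq:graphprobe}. Then identify the mixed moment $G_T^p$ with $\Psi_\Gamma(q(T))F_T^p$ plus an error, and show that the error is small in the norm $\|\cdot\|_p$ after the shape average.

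\emph{Computing the mixed moment.} The sites $0$ and $e\in E(\Gamma)$ carry independent site arrays, so integrating $dZ_T$ in \eqref{eq:mixedmoment} factorizes over sites, with $U_{\le p}$, $U_T$ and the prefixes fixed. Consider the assignment $\pi$ and fix an edge $e=(v,w)$. Its site contributes the factor $b_e^{\alpha_{\pi(v)}}b_e^{\alpha_{\pi(w)}}$; no other $P_u$ involves site $e$, since the graph is loopless and only the endpoints contain $b_e$. Site $0$ contributes $\prod_{\ell}S_0^{\alpha_\ell}$, because every replica receives exactly one factor $S_0$. Thus
\[
 G_T^p=F_T^p\sum_{\pi\in\mathfrak S_k}\prod_{e=(v,w)}K_e\bigl(\alpha_{\pi(v)},\alpha_{\pi(w)}\bigr),
\]
where $F_T^p$ is the site-$0$ future multioverlap \eqref{eq:FT}. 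Here $K_e(\alpha,\alpha')$ is the average over the future site variables of $b_e^\alpha b_e^{\alpha'}$. This is a function of $U_{\le p}$, $U_T$ and $Z^e_{\le p}$, and it is a \emph{partial} site average because the prefix $Z^e_{\le p}$ is retained.

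\emph{Replacing the pair moments.} Write $K_e=q(T)_{\pi(v)\pi(w)}+\varepsilon_e$. The main term is $\Psi_\Gamma(q(T))F_T^p$. Since $q(T)$ is deterministic given $T$, centering gives $\Pi_p$ of it equal to $\Psi_\Gamma(q(T))\Pi_pF_T^p$. Summed over shapes, this main term is the quantity in \eqref{eq:graphqtests}, and Proposition \ref{prop:mixedcoeff} controls the full sum of $\Sp_T\Pi_pG_T^p$. So it suffices to show that each error term, after $\Pi_p$ and $\Sp_T$ (both contractions), is small in $\|\cdot\|_p$ after summing over shapes. All factors are bounded by one, so telescoping over edges reduces this to showing that $\sum_T\mu_k^p(T)\|\varepsilon_e\|_{2}^2\to0$ uniformly in $p$. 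By \eqref{eq:normconversion}, this $L^2$ bound controls $\|\cdot\|_p$; the full-prefix norm treats $Z^e_{\le p}$ as part of the site variable, and the triangle inequality together with Cauchy--Schwarz over shapes handles the sum.

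\emph{Main obstacle: the retained prefix.} The key step is the $L^2$ smallness of $\varepsilon_e$. For two labels at depth $a=\alpha\wedge\alpha'$, average the full site variables: Lemma \ref{lem:A3} and the cut geometry give a value within $\max(\delta,\eta)$ of $q_a$. The partial average still depends on $Z_{\le p}$. If $a>p$, the two labels share the prefix through $a$, but the prefix through $p$ is common to every pair, so one must show that the partial average concentrates around the full one. I would prove this with a variance bound in $Z_{\le p}$. The variance of the partial pair moment in the retained prefix is a four-replica expression, namely $b$'s at two independent site copies that share the site prefix through $p$. Its mean equals the overlap square moment at a deeper-branching configuration. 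Using again that scalar products of pairs separating at any fixed level lie in an interval of width $\delta$, as in the proof of Lemma \ref{lem:gram}, the variance is $O(\delta+\eta)$; this is also the mechanism of \eqref{eq:geomL2}. For $a=r$ (repeated labels) use $\|b_\alpha\|_2^2\ge q_*-\eta$. This gives a bound uniform in the number of levels, and Cauchy--Schwarz then closes the estimate.
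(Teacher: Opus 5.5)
Your choice of factors and your site factorization $G_T^p=F_T^p\sum_{\pi}\prod_{e}K_e(\alpha_{\pi(v)},\alpha_{\pi(w)})$ agree with the paper. However, the step you single out as the main obstacle is false, and the argument does not close through it. You keep the edge prefixes $Z^e_{\le p}$ and claim that the partial pair moment $K_e$ concentrates around the full scalar product with variance $O(\delta+\eta)$. The overlap geometry constrains only full site averages $\E_Zb^\alpha b^{\alpha'}=b_\alpha\cdot b_{\alpha'}$. It says nothing about how the local pair moment varies from site to site, and the retained prefix always contains the root site coordinate.

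Here is a counterexample to your $L^2$ claim. Take a field of the form \eqref{eq:MPtrial} with $h_0=\phi(z_0)\psi(z_1,\ldots,z_r)$ and $|\phi|$ nonconstant. All scalar products then depend only on the branching level. Yet for a repeated label at $p=0$ the local moment is $\phi(Z_*)^2\,\E\psi^2$, which fluctuates by an order-one amount.

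Centering does not rescue the argument either. Take $r=1$ and $h=c+\epsilon\,s(Z_1)g(U_1,Z_0)$ with $|c|+\epsilon\le1$, where $s(Z_1)=\pm1$ with probability $1/2$ each, $g(u,z)=\mathbf 1\{z<1/2\}$ for $u<1/2$, and $g(u,z)=\mathbf 1\{z\ge1/2\}$ otherwise. The two level overlaps are exactly $c^2$ and $c^2+\epsilon^2/2$, so $\delta=\eta=0$. But the local moment of a repeated label is $c^2+\epsilon^2g(U_1,Z_0)$. Hence $\Pi_0\varepsilon_e=\epsilon^2(g(U_1,Z^e_0)-1/2)$, whose $\|\cdot\|_0$-norm is $\epsilon^2/2$.

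The four-replica quantity you invoke, two sites sharing a site prefix, is not an overlap quantity at all: distinct sites carry independent site arrays. The analogy with \eqref{eq:geomL2} fails because there the variance is over future state variables, which is exactly what the Gram kernel in Lemma \ref{lem:gram} controls.

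The missing idea is to integrate out the edge prefixes before comparing with $q(T)$.
\begin{itemize}
\item Averaging an extra independent site prefix contracts $\|\cdot\|_p$ (Lemma \ref{lem:normfacts}). It also commutes with $\Pi_p$ and $\Sp_T$, which integrate only state variables. So \eqref{eq:testedsum} also gives $\sup_{p<r}\|\sum_T\mu_k^p(T)\Sp_T\Pi_p\E_{\mathrm{edge}}G_T^p\|_p\to0$, where $\E_{\mathrm{edge}}$ integrates every $Z^e_{\le p}$ and keeps $Z^0_{\le p}$.
\item After this averaging, each edge factor is the full site average $\E_Zb^{\alpha}b^{\alpha'}=b_\alpha\cdot b_{\alpha'}$, by \eqref{eq:heoverlaps}. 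By the bin geometry and \eqref{eq:baryerror}, this lies within $\max(\delta,\eta)$ of $q_{\alpha\wedge\alpha'}$ almost surely, uniformly over all pairs.
\item Thus $\E_{\mathrm{edge}}G_T^p=F_T^pW_\Gamma$ with $\|W_\Gamma-\Psi_\Gamma(q(T))\|_\infty\le k!\,|E(\Gamma)|\max(\delta,\eta)$. Since $0\le F_T^p\le1$ and $\Pi_p,\Sp_T$ are contractions, the replacement costs at most this amount in norm, for every shape and depth.
\end{itemize}
No variance bound, $L^2$ estimate or Cauchy--Schwarz over shapes is needed. Only site $0$ must keep its prefix, because it carries $F_T^p$.
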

\begin{proof}
Substituting \eqref{eq:graphprobe} into
\eqref{eq:mixedmoment} gives
\begin{equation}\label{eq:graphcoefficient}
 G_T^p=\int
     \Bigl(\prod_{\ell=1}^kS_0^{\alpha_\ell}\Bigr)
       \sum_{\pi\in\mathfrak S_k}
          \prod_{e=(v,w)\in E(\Gamma)}
                  b_e^{\alpha_{\pi(v)}}b_e^{\alpha_{\pi(w)}}
       \,dZ_T,
\end{equation}
where $dZ_T=dZ_T^0\prod_{e\in E(\Gamma)}dZ_T^e$
integrates the future variables at all input sites.
Every replica contributes a site-$0$ factor, while
each edge site occurs in the two replica factors
assigned to its endpoints. All site prefixes remain
as arguments of $G_T^p$.

Define
\[
 \E_{\mathrm{edge}}G_T^p
    =\int G_T^p\prod_{e\in E(\Gamma)}dZ_{\le p}^e,
\]
holding all state variables and $Z_{\le p}^0$ fixed.
By Section \ref{sec:kernels}, this average contracts
the norm and it commutes with the state averages
$\Pi_p$ and $\Sp_T$, so by \eqref{eq:testedsum},
\begin{align*}
 &\sup_{p<r}\Bigl\|
    \sum_T\mu_k^p(T)\,\Sp_T\Pi_p
                      \E_{\mathrm{edge}}G_T^p
                    \Bigr\|_p\,\, \leq\,\,
   \sup_{p<r}\Bigl\|
    \sum_T\mu_k^p(T)\,\Sp_T\Pi_pG_T^p
                    \Bigr\|_p
   \longrightarrow0.
\end{align*}
It remains to identify $ \E_{\mathrm{edge}}G_T^p$.
The future site variables were already integrated in
$G_T^p$, so $\E_{\mathrm{edge}}$ completes the full site
average at every edge site. Site $0$ still has its
prefix fixed. Independence across sites in
\eqref{eq:graphcoefficient} gives
\begin{equation}\label{eq:graphtag}
 \begin{aligned}
 \E_{\mathrm{edge}}G_T^p
     &=F_T^p\,W_\Gamma(\boldsymbol\alpha),
 \,\, \mbox{ where }   \,\, 
 W_\Gamma(\boldsymbol\alpha)
     &=\sum_{\pi\in\mathfrak S_k}
         \prod_{e=(v,w)\in E(\Gamma)}\E_Z
                 b^{\alpha_{\pi(v)}}b^{\alpha_{\pi(w)}}.
 \end{aligned}
\end{equation}
Here $F_T^p$ is the future multioverlap at site $0$.
In each pair moment, $\E_Z$ integrates the full union
of the two site paths, including the root, with all
state variables fixed. We suppress the edge-site
index because every site has the same representation.
The weight $W_\Gamma$ therefore still depends on the
state variables along the replica paths.

Substituting \eqref{eq:graphtag} into the norm estimate
above yields
\begin{equation}\label{eq:graphsmall}
 \sup_{p<r}\Bigl\|
    \sum_T\mu_k^p(T)\,\Sp_T\Pi_p
                 \bigl(F_T^pW_\Gamma(\boldsymbol\alpha)\bigr)
                    \Bigr\|_p\longrightarrow0.
\end{equation}
Each pair moment in \eqref{eq:graphtag} is the scalar
product of two cluster barycenters, by
\eqref{eq:heoverlaps}. For distinct terminal labels,
the overlap-bin geometry and the choice
\eqref{eq:qrepresentatives} bound the difference from
$q_{\alpha\wedge\alpha'}$ by $\delta$. When the
labels coincide, \eqref{eq:baryerror} bounds the
difference from $q_r=q_*$ by $\eta$. Thus
\begin{equation}\label{eq:pairapprox}
 \E_Zb^\alpha b^{\alpha'}
     =q_{\alpha\wedge\alpha'}+e_{\alpha,\alpha'},
 \qquad
 \sup_{\alpha,\alpha'}|e_{\alpha,\alpha'}|
       \le\max(\delta,\eta)
       \quad\text{almost surely}.
\end{equation}
This bound holds before averaging any state variables
and is uniform over all pairs, including repeated labels.

Replace the edge factors in $W_\Gamma$ one at a time
using \eqref{eq:pairapprox}. Each original and replacement
factor has absolute value at most one. There are
$|E(\Gamma)|$ factors in each of the $k!$ products, so
\[
 W_\Gamma(\boldsymbol\alpha)
     =\Psi_\Gamma(q(T))+e_{\Gamma,T},\qquad
 \sup_T\|e_{\Gamma,T}\|_\infty
     \le k!|E(\Gamma)|\max(\delta,\eta).
\]
Since $0\le F_T^p\le1$, the supremum-norm bound and
the contraction properties of $\Pi_p$ and $\Sp_T$ give
\[
 \sup_{p<r,T}
   \|\Sp_T\Pi_p(F_T^p e_{\Gamma,T})\|_p
       \le k!|E(\Gamma)|\max(\delta,\eta).
\]
The weights $\mu_k^p(T)$ sum to one, so the same bound
controls the error in their weighted sum. We may
therefore make this replacement in \eqref{eq:graphsmall}.
The new weight $\Psi_\Gamma(q(T))$ is deterministic
for each shape and can be taken outside both $\Pi_p$
and $\Sp_T$. This proves \eqref{eq:graphqtests}.
\end{proof}

\subsection{Continuous invariant tests}\label{sec:continuoustests}
Proposition \ref{prop:graphtests} gives \eqref{eq:graphqtests}
for every graph $\Gamma$. We now explain why these polynomials suffice
to test every continuous permutation-invariant function
of the overlap matrix. Every monomial in the off-diagonal entries has the form
\[
 \prod_{1\le v<w\le k}q_{v,w}^{n_{v,w}},
 \qquad n_{v,w}\in\{0,1,2,\ldots\}.
\]
Represent it by a graph with $n_{v,w}$ edges between
$v$ and $w$. Its sum over all permutations of the
replicas is exactly $\Psi_\Gamma$. Thus the graph
polynomials span all permutation-invariant polynomials
in the off-diagonal entries. The graph with no edges
gives the constant $\Psi_\Gamma=k!$. The diagonal
requires no tests, since its entries are fixed at $q_*$.

Let $\cU_k$ be the compact space of symmetric ultrametric
matrices with entries in $[0,q_*]$ and diagonal $q_*$,
modulo simultaneous permutations of rows and columns.
A continuous function on this space is a continuous
function of the matrix entries satisfying
$
 \psi\bigl((q_{\pi(v),\pi(w)})_{v,w\le k}\bigr)=\psi(q)$
 for all $\pi\in\mathfrak S_k.$
On the compact set of matrices before taking this
quotient, Stone-Weierstrass gives uniform approximation
by polynomials in the off-diagonal entries. If $P$
approximates $\psi$, then
\[
 P_{\rm sym}(q)=\frac1{k!}\sum_{\pi\in\mathfrak S_k}
                   P\bigl((q_{\pi(v),\pi(w)})_{v,w\le k}\bigr)
\]
is invariant and has no larger approximation error.
Since $P_{\rm sym}$ is a linear combination of the
$\Psi_\Gamma$, these are the tests we need.

These tests retain the arrangement of the overlaps in
the matrix, up to relabeling. An unordered list of pair
overlaps would contain less information: two-level
partitions of six replicas into groups of sizes $(3,3)$
and $(4,1,1)$ have the same numbers of within-group and
between-group pairs, but different shapes.

\begin{proposition}[Continuous shape tests]\label{prop:tests}
For every continuous permutation-invariant $\psi$,
\begin{equation}\label{eq:testedF}
 \sup_{p<r}\Bigl\|
     \sum_T\mu_k^p(T)\psi(q(T))\,\Sp_T(\Pi_pF_T^p)
                    \Bigr\|_p\longrightarrow0.
\end{equation}
If $\kappa_m\to0$ for all $m<k$, then
\begin{equation}\label{eq:testedF2}
 \sup_{p<r}\Bigl\|
     \sum_T\mu_k^p(T)\psi(q(T))\,\Pi_pH_T^p
                    \Bigr\|_p\longrightarrow0.
\end{equation}
\end{proposition}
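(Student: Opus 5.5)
For \eqref{eq:testedF}, I will approximate $\psi$ uniformly on the compact space $\cU_k$ by a linear combination of graph polynomials and then apply Proposition \ref{prop:graphtests} term by term. Given $\varepsilon>0$, the Stone--Weierstrass argument preceding the statement gives a symmetrized polynomial $P_{\rm sym}=\sum_\Gamma c_\Gamma\Psi_\Gamma$, a finite combination with coefficients fixed independently of the cut, such that $\sup_q|\psi(q)-P_{\rm sym}(q)|\le\varepsilon$ over symmetric matrices with entries in $[0,q_*]$ and diagonal $q_*$. Every quantized matrix $q(T)$ lies in this set, since the representatives \eqref{eq:qrepresentatives} belong to $[0,q_*]$. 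Split the weighted sum into the $P_{\rm sym}$ part and the error part. By the triangle inequality for $\|\cdot\|_p$, the first part is at most $\sum_\Gamma|c_\Gamma|$ times the quantities in \eqref{eq:graphqtests}, and hence it tends to zero uniformly in $p$. For the error part, I use $0\le F_T^p\le1$ together with the contraction properties of $\Pi_p$ and $\Sp_T$ and the bound $\|f\|_p\le\|f\|_\infty$. This gives $\|\Sp_T\Pi_p((\psi-P_{\rm sym})(q(T))F_T^p)\|_p\le 2\varepsilon$ for each $T$, and summing against $\mu_k^p$ keeps the bound at $2\varepsilon$. Taking the fine-cut limit first and then $\varepsilon\downarrow0$ proves \eqref{eq:testedF}.

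For \eqref{eq:testedF2}, I will replace $F_T^p$ by $H_T^p$ using the branch replacement Lemma \ref{lem:branch}, and then remove $\Sp_T$ using $\Sp_TH_T^p=H_T^p$, which was observed after \eqref{eq:commute}. Since $\Pi_p$ and $\Sp_T$ are linear contractions,
\[
 \Bigl\|\sum_T\mu_k^p(T)\psi(q(T))\Sp_T\Pi_p(F_T^p-H_T^p)\Bigr\|_p
 \le\|\psi\|_\infty\sum_T\mu_k^p(T)\|F_T^p-H_T^p\|_p .
\]
By Jensen's inequality with respect to the probability weights $\mu_k^p$, the right side is at most $\|\psi\|_\infty\bigl(\sum_T\mu_k^p(T)\|F_T^p-H_T^p\|_p^4\bigr)^{1/4}$. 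By \eqref{eq:branchbound} this is at most $\|\psi\|_\infty(k^4B_k\max_{m<k}\kappa_m)^{1/4}$, uniformly in $p$, and it tends to zero under the hypothesis $\kappa_m\to0$ for $m<k$. Combining this with \eqref{eq:testedF} and using $\Sp_T\Pi_pH_T^p=\Pi_p\Sp_TH_T^p=\Pi_pH_T^p$, which follows from \eqref{eq:commute}, gives \eqref{eq:testedF2}.

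The step that needs the most care is justifying $\Sp_TH_T^p=H_T^p$ and the commutation \eqref{eq:commute} when the shapes are realized with replica $1$ on a fixed path. The point to verify is that $H_T^p$ depends only on the common stem through $a(T)$, which lies on replica $1$'s path, so integrating out the coordinates off that path leaves it unchanged. I also need to check that the shape-dependent realizations are compatible, so that everything is a function of the same variables $(U_{\le p},U,Z_{\le p})$ and the sums over $T$ make sense. The remaining steps are routine norm bookkeeping, and none of the constants depend on the number of levels $r$.
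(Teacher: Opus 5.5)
Your proposal is correct and follows the paper's proof essentially step for step. For the first claim, you approximate $\psi$ uniformly by a fixed combination of graph polynomials $\Psi_\Gamma$, apply Proposition~\ref{prop:graphtests}, and use the bound $\|\Sp_T\Pi_pF_T^p\|_p\le1$. For the second, you use $\Sp_TH_T^p=H_T^p$ with \eqref{eq:commute}, contraction, H\"older/Jensen over the shape weights, and the branch replacement bound \eqref{eq:branchbound}; your error constant $2\varepsilon$ can be sharpened to $\varepsilon$, but this is immaterial.
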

\begin{proof}
Equation \eqref{eq:graphqtests} and linearity give
\eqref{eq:testedF} for invariant polynomials. Choose
such a polynomial $P$ with $\|\psi-P\|_\infty\le\varepsilon$.
Since $\sum_T\mu_k^p(T)=1$ and
$\|\Sp_T\Pi_pF_T^p\|_p\le1$, the triangle inequality gives
\[
 \Bigl\|\sum_T\mu_k^p(T)
        (\psi(q(T))-P(q(T)))\,\Sp_T(\Pi_pF_T^p)\Bigr\|_p
       \le\varepsilon.
\]
Take the fine-cut limit with $P$ fixed, and then let
$\varepsilon\downarrow0$. This proves the first assertion.

For the second assertion, recall that $H_T^p$ depends
only on state coordinates along the common stem, so
$\Sp_TH_T^p=H_T^p$. Using \eqref{eq:commute}, contraction
and the triangle inequality, followed by H\"older's
inequality over shapes, gives
\begin{align*}
 &\Bigl\|\sum_T\mu_k^p(T)\psi(q(T))
       \bigl(\Sp_T\Pi_pF_T^p-\Pi_pH_T^p\bigr)\Bigr\|_p
       \le\|\psi\|_\infty
          \sum_T\mu_k^p(T)\|F_T^p-H_T^p\|_p\\
 &\quad\le\|\psi\|_\infty
       \Bigl(\sum_T\mu_k^p(T)\|F_T^p-H_T^p\|_p^4\Bigr)^{1/4}
\le\|\psi\|_\infty
            \Bigl(k^4B_k\max_{m<k}\kappa_m\Bigr)^{1/4}.
\end{align*}
The last step is the branch replacement estimate
\eqref{eq:branchbound}. The induction hypothesis makes
this bound tend to zero, uniformly in $p$.
\end{proof}

We have obtained the tested sums of common-stem moments
announced at the end of Section \ref{sec:induction}.
To finish the induction, we must pass from these sums
to the average of the individual covariance norms.
Sections \ref{sec:compactness} and \ref{sec:closing}
perform that step; no further cavity equations are used.

\section{Compactness in the overlap variables}\label{sec:compactness}

The tests of Section \ref{sec:graphs} control averages
of the centered common-stem moments. To control the
average of the fourth powers of their covariance norms,
we must rule out cancellations on increasingly fine
sets of overlap matrices. We approximate these moments
by functions constant on a fixed finite partition of
the physical overlap space. The comparison uses one
fine-cut representation at a time, with both moments
indexed by the same physical overlap tree.

\subsection{A fixed space on which to compare the cuts}\label{sec:fixedoverlaps}

Fix $k\ge2$. Let $\cU_k^{\rm lab}$ be the compact space
of labeled symmetric ultrametric $k\times k$ matrices with
entries in $[0,q_*]$ and diagonal $q_*$. Its quotient by
simultaneous permutations of rows and columns is the space
$\cU_k$ used in Section \ref{sec:continuoustests}.
Let $\mu_k$ be the law of the physical overlap matrix
$R=(R_{\ell\ell'})_{\ell,\ell'\le k}$ under
$\E G^{\otimes k}$. This is a fixed permutation-invariant
probability measure on $\cU_k^{\rm lab}$. We retain the
replica labels when describing trees and their probabilities;
the tests and the coarse cells will be invariant under
relabeling.
At a given cut write
\begin{equation}
 d(x)=a\quad\text{if }x\in I_a,\qquad
 T(R)_{\ell\ell'}=d(R_{\ell\ell'})\quad(\ell\ne\ell'),
 \qquad T(R)_{\ell\ell}=r.
\label{eq:dxfound}
\end{equation}
Then $T(R)$ has the shape law $\mu_k^0$ of the original RPC, and
\begin{equation}\label{eq:physicalquantization}
 \max_{\ell,\ell'}|q(T(R))_{\ell\ell'}-R_{\ell\ell'}|
       \le\max(\delta,\eta)\qquad\mu_k\text{-almost surely},
\end{equation}
where the quantized overlap matrix $q(T)$ was defined in \eqref{eq:qmatrix}. Thus the different finite shape laws can all be read from
the same random matrix $R$. In defining $d$ outside
$\supp\zeta$, use the intervals of the retained cut
thresholds, so that $d$ is defined and nondecreasing
on all of $[0,q_*]$.

For each $p<r$, place the common-stem moments on one
canonical path $(U_0,Z_0),\ldots,(U_r,Z_r)$, as in
Section \ref{sec:branches}, and define
\begin{equation}\label{eq:physicalf}
 f_p(R)=
 \begin{cases}
  \Pi_pH_{T(R)}^p,&a(T(R))>p,\\
  0,&a(T(R))\le p.
 \end{cases}
\end{equation}
Coordinates beyond the common stem are unused. Here $R$
has law $\mu_k$, independently of the uniform variables.
The function $f_p(R)$ and its variable space may change
with the cut; the measure $\mu_k$ does not. We suppress
the cut from the notation.

Recall the two facts
$\mu_k^p(T)=\mu_k^0(T)$ when $a(T)>p$ and
$\Pi_pH_T^p=0$ when $a(T)=p$. They give, for every bounded
test $\psi$, the exact identities
\begin{align}
 \sum_T\mu_k^p(T)\psi(q(T))\Pi_pH_T^p
   &=\int\psi(q(T(R)))f_p(R)\,\mu_k(dR),
       \label{eq:fixedlawtests}\\
 \sum_T\mu_k^p(T)\|\Pi_pH_T^p\|_p^4
   &=\int\|f_p(R)\|_p^4\,\mu_k(dR).
       \label{eq:fixedlawenergy}
\end{align}
Under the induction hypothesis, the first quantity tends
to zero in norm for every fixed continuous invariant $\psi$,
by \eqref{eq:testedF2}. We need to show that the second
quantity tends to zero. Tested convergence alone does not
imply this: cancellation could occur on progressively finer
sets of overlap matrices as the cut changes.

We will construct a finite partition into fixed
invariant Borel cells on which the moments admit a
uniform approximation. Section \ref{sec:closing} will
then use the tests on these cells to prove
\eqref{eq:fixedlawenergy} tends to zero. The next
subsection defines the coarse partition of the overlap
range and states the approximation theorem.

\subsection{The coarse partition and the approximation theorem}\label{sec:cells}

Fix $0<\tau<1$. Choose a finite ordered partition
$\cP$ of $[0,q_*]$ such that every atom of $\zeta$ of
mass greater than $\tau$ is a singleton member, and
every other member $J$ is an interval with
$\zeta(J)\le2\tau$. After removing the finitely many
large atoms, such a partition can be obtained by splitting
each remaining interval into pieces of mass between
$\tau$ and $2\tau$, except possibly its last piece.
Indeed, cut when the accumulated mass first reaches
$\tau$; any jump has size at most $\tau$. Endpoints
may be included or excluded as needed to give a disjoint
partition.

The partition $\cP$ is chosen from $\zeta$ and $\tau$
and kept fixed as the fine cut changes. It is applied
to the physical overlaps. The fine-cut bins $I_a$ need
not refine $\cP$ or isolate its singleton atoms.
Recall $d(x)$ in \eqref{eq:dxfound} and, for each $J\in\cP$, set
\begin{equation}\label{eq:coarsedepth}
 a(J)=\min_{x\in J}d(x),\qquad
 \bar d_p(x)=\max\{p,a(J)\}\quad(x\in J).
\end{equation}
The minimum exists because $d$ takes finitely many
integer values. Both $d$ and $\bar d_p$ are nondecreasing,
and
\(
 \bar d_p(x)\le\max\{p,d(x)\}.
\)
For a singleton member $J=\{x_0\}$,
$\bar d_p(x_0)=\max\{p,d(x_0)\}$.

Define a new discrete shape and its centered moment by
\begin{equation}\label{eq:coarsemoment}
 \overline T_p(R)_{\ell\ell'}=
 \begin{cases}
  \bar d_p(R_{\ell\ell'}),&\ell\ne\ell',\\
  r,&\ell=\ell',
 \end{cases}
 \qquad
 \bar f_p(R)=\Pi_pH_{\overline T_p(R)}^p.
\end{equation}
Monotonicity of $\bar d_p$ preserves ultrametricity,
so this is a valid shape on the same fine-cut levels
$0,\ldots,r$. Its moment is evaluated using the same
representation $h$ in \eqref{eq:he} as $f_p$ in \eqref{eq:physicalf}. If its first split is $p$, its centered moment is zero.

The value of $\bar f_p(R)$ depends only on which
members of $\cP$ contain the off-diagonal entries of
$R$. These memberships give a fixed finite partition
of the overlap space. We will define its invariant
cells in Section \ref{sec:closing}, where they are used.

\begin{theorem}[Finite approximation of the common-stem moments]\label{thm:compact}
Let $\E$ average the canonical state and site variables.
For the partition and moments above,
\begin{equation}\label{eq:compact}
 \sup_{p<r}\int\E|f_p(R)-\bar f_p(R)|^2\,\mu_k(dR)
       \le C_k\tau
\end{equation}
at every finite cut, for a constant depending only on $k$.
\end{theorem}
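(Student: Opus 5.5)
The plan is to reduce the comparison of $f_p$ and $\bar f_p$ to a statement about martingales along the canonical path, and to use the smallness $\zeta(J)\le2\tau$ of each non-atomic coarse cell as a weight. A crude bound using only $|f_p-\bar f_p|\le4$ and the probability that $T(R)\ne\overline T_p(R)$ is useless: that probability need not be small, since every off-diagonal entry may land in a non-atomic cell. So we must use continuity of the common-stem moment in the split depths.

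\emph{Step 1: locate the discrepancies.} We have $\bar d_p(x)\le\max\{p,d(x)\}$, and $\bar d_p=\max\{p,d\}$ on singleton atoms. Hence $\overline T_p(R)$ is obtained from $T(R)$ by lowering some split depths. For a pair whose overlap lies in a non-atomic cell $J$, the depth moves from $d(R_{\ell\ell'})$ down to $\max\{p,a(J)\}$. Both shapes are ultrametric and the moves are monotone. So I will pass from $T(R)$ to $\overline T_p(R)$ through at most $k-1$ intermediate shapes, lowering one internal vertex of the replica tree (one group split) at a time. The cost is a factor depending only on $k$ in the squared $L^2$ error.

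\emph{Step 2: one vertex at a time, via a martingale in the depth.} Fix a group of replicas whose split is moved from depth $a$ to $a'<a$, with $a,a'$ both in the image of one cell $J$ under $d$. Write the common-stem moment as a product of child means $m_i$ averaged over stem site variables, as in \eqref{eq:HT}. Lowering the split replaces the shared stem coordinates on $(a',a]$ by independent copies for each child. I would compare the two moments through the Doob martingale $j\mapsto M_j=\E[\,\cdot\mid U_{\le j},Z_{\le j}]$ of the relevant bounded child products along the canonical path. Both moments are expressible through this martingale: the shared-stem version through the product at depth $a$, the split version through the separate averages at depth $a'$. Telescoping, together with boundedness of all factors by one and the contraction of $\Pi_p$, then gives
\[
 \E|H_T^p-H_{T'}^p|^2\le C_k\sum_{j=a'}^{a-1}\E|M_{j+1}-M_j|^2 .
\]
The increments are taken along the levels inside $J$. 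The right side is uniformly summable over all $j$, since orthogonality of martingale increments gives $\sum_j\E|M_{j+1}-M_j|^2\le1$.

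\emph{Step 3: weighting by $\mu_k$.} Under $\mu_k$, conditionally on the rest of the tree, the depth at which a given vertex splits falls at level $j$ with probability proportional to the RPC increment $w_j$. This follows from the product structure behind Lemma \ref{lem:product}. Therefore the error contributed by cell $J$ is at most
\[
 C_k\sum_{a\in d(J)}w_a\sum_{j=a(J)}^{a-1}\E|M_{j+1}-M_j|^2
 \le C_k\,\zeta(J)\sum_{j\in d(J)}\E|M_{j+1}-M_j|^2 .
\]
Using $\zeta(J)\le2\tau$ and summing over cells and vertices yields $C_k\tau$ times the total martingale energy, which is at most $C_k\tau$, uniformly in $p$ and in the cut. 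This is the main obstacle. One difficulty is making precise that the martingale increments governing different replica configurations can be dominated by a single shape-independent energy, since the child products depend on the shape of the children. I would first try to handle this by conditioning on the coarse data of the remaining tree, which does not involve $J$, and applying the martingale bound to each such finite family separately. The other difficulty is checking that fine bins straddling the boundary of two coarse cells cost nothing extra; this holds because $a(J)$ is defined as a minimum over $J$ and $d$ is nondecreasing.
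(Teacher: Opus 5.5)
\paragraph{Overall assessment.}
Your Steps 1--2 follow the paper's architecture: replace one internal vertex of the physical overlap tree at a time, lowering parents before children so that every intermediate depth assignment stays admissible, and note that coarse depths can separate vertices that $T(R)$ merges. Read with $M$ as the vector of child martingales $\E[Q_i\mid\cF_j]$, your Step 2 is a valid and simpler substitute for the paper's one-depth variation lemma. It uses $|\prod_i y_i-\prod_i y_i'|\le\sum_i|y_i-y_i'|$ for factors in $[0,1]$, orthogonality of martingale increments, and $L^2$-contractivity of the remaining positive operations. This gives constant $b^2$ instead of $\chi_b$, without polarization or $S\ge0$. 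The genuine gap is in Step 3.

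\paragraph{The weights in Step 3 are wrong.}
The coarse depth $\bar d_p(x)$ depends on the cell $J\ni x$, not only on the fine depth $d(x)$. The fine bins need not refine $\cP$, and cannot be assumed to, because the cells must stay fixed as the cut varies. Hence $\bigcup_{a\in d(J)}I_a\supseteq J$, and your inequality $\sum_{a\in d(J)}w_a\le\zeta(J)$ goes the other way. For example, if a fine bin contains a heavy atom together with the lower end of an adjacent light cell $J$, then $\sum_{a\in d(J)}w_a>\tau$ however small $\zeta(J)$ is. The correct weight of depth $a$ for cell $J$ is $\zeta(J\cap I_a)$. This is visible only through the law of the physical overlap coordinates, not through the discrete shape law.

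\paragraph{The conditional-proportionality claim is false and not what is needed.}
It is not true in general that, conditionally on the rest of the tree, a vertex's depth falls at level $j$ with probability proportional to $w_j$. A group of $m\ge3$ replicas contributes factors $(m-1)\zeta_d-\zeta_{d-1}$. Moreover, Lemma \ref{lem:product} concerns discrete shapes, in which vertices with equal fine depth are merged. Even where proportionality holds, normalizing by the mass of the admissible range can give a cell conditional weight much larger than $\zeta(J)$. For instance, conditionally on $R_{13}=y$, the law of $R_{12}$ is $\zeta$ restricted to $(y,q_*]$ and divided by $\zeta((y,q_*])$.

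\paragraph{The missing ingredient and how it repairs Step 3.}
What the argument needs is the \emph{joint} domination $\nu_{\mathfrak t}\le\zeta^{\otimes m}$ of the physical coordinates of each labeled physical tree. The paper proves this from the Ghirlanda--Guerra identities by deleting a replica from a group whose children are all singletons. If the group has $b\ge3$ children, all coordinates survive. If $b=2$, the identities produce the factor $\frac1{k-1}\zeta(dx)$ on $(M,q_*]$.

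With this domination, your ``shape-independent energy'' difficulty disappears by Tonelli. Fix all other physical coordinates $\boldsymbol x_{-v}$. At that step the parent already has its coarse depth and the children still have their original fine depths, so conditioning on ``coarse data of the remaining tree'' is not the right object. The function $d\mapsto H(d)$ is then fixed, and your martingale bound applies on that slice with weights $\zeta(J\cap I_a\cap A_v)$, summing to at most $2\tau$ per cell. The outer integral is against a measure of total mass at most one.
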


To prove the theorem, we first express both moments
by a recursion indexed by the physical overlap tree.
We then establish a product-measure bound for its
overlap coordinates and a variation estimate for one
assigned integer depth. The proof in
Section \ref{sec:compactproof} combines these estimates
by changing one assigned depth at a time and integrating
the resulting errors.

\subsection{Physical overlap trees and the moment recursion}\label{sec:recursion}

\paragraph{The physical overlap tree.}
For a physical overlap matrix $R$, consider the
equivalence classes in $\{1,\ldots,k\}$ defined, for
$0\le q\le q_*$, by
\begin{equation}\label{eq:equiv}
 \ell\sim_q\ell'
 \quad\Longleftrightarrow\quad R_{\ell\ell'}\ge q.
\end{equation}
Let $\mathfrak t(R)$ consist of all distinct such
classes, together with the singleton sets $\{\ell\}$.
Ordered by inclusion, they form a rooted tree whose
root is $v_0=\{1,\ldots,k\}$. The children of a vertex
are its maximal proper subsets in this collection.
The singleton leaves record replica occurrences and
remain distinct even when the corresponding states
coincide. Each vertex $v$ is a collection of replica indices.
For convenience, we denote this collection by $D(v)$,
although formally $D(v)=v$.

Write $\cB(\mathfrak t)$ for the nonsingleton,
or internal, vertices of a tree $\mathfrak t$.
Every internal vertex has at least two children, so
$|\cB(\mathfrak t)|\le k-1$.

The tree $\mathfrak t(R)$ records the labeled groups
and their parent--child relations. Its numerical
overlaps are separate coordinates:
\[
 x_v=\min_{\substack{\ell,\ell'\in v\\\ell\ne\ell'}}
          R_{\ell\ell'}
       \qquad(v\in\cB(\mathfrak t(R))).
\]
They increase strictly along edges between internal
vertices. Overlaps at incomparable vertices may be
equal. An internal vertex at $q_*$ has only singleton
children and records repeated occurrences of one state.
Fix a possible labeled tree $\mathfrak t$ with $m$
internal vertices. The domain of its physical overlap
coordinates is
\begin{equation}\label{eq:physicaldomain}
 D_{\mathfrak t}
 =\bigl\{\boldsymbol x\in[0,q_*]^{\cB(\mathfrak t)}:
       x_v<x_i\text{ whenever }i\text{ is an internal
       child of }v\bigr\}.
\end{equation}
For $\boldsymbol x\in D_{\mathfrak t}$, let
$R_{\mathfrak t}(\boldsymbol x)$ be the matrix with
diagonal $q_*$ and
\[
 R_{\mathfrak t}(\boldsymbol x)_{\ell\ell'}
       =x_{v(\ell,\ell')}\quad(\ell\ne\ell'),
\]
where $v(\ell,\ell')$ is the smallest vertex containing
both replica labels. This reconstructs $R$ from
$(\mathfrak t(R),\boldsymbol x(R))$. In particular,
fixing $\mathfrak t$ fixes only the tree structure;
the numerical overlaps still range over $D_{\mathfrak t}$.

Define their subprobability law by
\begin{equation}\label{eq:physicaltreelaw}
 \nu_{\mathfrak t}(B)
 =\mu_k\{R:\mathfrak t(R)=\mathfrak t,
                    \ \boldsymbol x(R)\in B\}.
\end{equation}
Its total mass is $\mu_k\{\mathfrak t(R)=\mathfrak t\}$;
we do not normalize by this probability. Thus, for
every nonnegative Borel function $F$,
\begin{equation}\label{eq:treecoordinates}
 \int_{\{\mathfrak t(R)=\mathfrak t\}}F(R)\,\mu_k(dR)
 =\int_{D_{\mathfrak t}}
       F(R_{\mathfrak t}(\boldsymbol x))\,
                         \nu_{\mathfrak t}(d\boldsymbol x).
\end{equation}

\paragraph{Discrete depth assignments and their moments.}
Let us fix the fine cut, its representation $h$, and $p<r$.
For a fixed physical overlap tree $\mathfrak t$, assign integer
depths
$\boldsymbol\ell=(\ell_v)_{v\in\cB(\mathfrak t)}$
to its internal vertices. Each singleton leaf has
assigned depth $r$. We allow all assignments satisfying
\begin{equation}\label{eq:weakdepths}
 p\le\ell_{v_0},\qquad
 \ell_v\le\ell_i\le r
       \quad\text{when }i\text{ is a child of }v.
\end{equation}
These integer depths may be equal for a vertex and
an internal child, while their physical overlaps in
\eqref{eq:physicaldomain} are strictly ordered.

On the canonical path put
\[
 \cF_a=\sigma(U_{\le a},Z_{\le a}),\qquad
 \cG_a^p=\sigma(U_{\le a},Z_{\le p})\quad(a\ge p),
 \qquad S=\frac{1+h(U_{\le r},Z_{\le r})}{2}.
\]
All expectations below use the product law of these
uniform variables in the fixed fine-cut representation.
Set $Q_{\{\ell\}}=S$ at each singleton leaf and define
the other functions recursively from the leaves towards the root:
\begin{equation}\label{eq:Qrec}
 Q_v=\prod_{i\text{ child of }v}\E[Q_i\mid\cF_{\ell_v}],
 \qquad
 H_{\mathfrak t}^p(\boldsymbol\ell)
       =\E[Q_{v_0}\mid\cG_{\ell_{v_0}}^p].
\end{equation}
The dependence of $Q_v$ on the depth assignment
$\boldsymbol\ell$ is suppressed. Each $Q_v$ is $\cF_{\ell_v}$-measurable
and lies in $[0,1]$. The tree $\mathfrak t$ specifies
which child factors multiply; the assigned depths
specify the conditional expectations in the existing
filtration. The last expectation averages the site
coordinates between $p$ and $\ell_{v_0}$, retaining
$(U_{\le\ell_{v_0}},Z_{\le p})$.

For $R=R_{\mathfrak t}(\boldsymbol x)$, recall
$d(x)$ in \eqref{eq:dxfound} and $\bar d_p(x)$
in \eqref{eq:coarsedepth}. The two assignments
needed in Theorem \ref{thm:compact} are
\begin{equation}\label{eq:twodepthassignments}
 d_v=\max\{p,d(x_v)\},\qquad
 \bar d_v=\bar d_p(x_v)
       \quad(v\in\cB(\mathfrak t)).
\end{equation}
Both satisfy \eqref{eq:weakdepths}, and $\bar d_v\le d_v$.
The depth assigned to the common ancestor of replicas
$i$ and $j$ is $\max\{p,d(R_{ij})\}$ under the first
assignment and $\bar d_p(R_{ij})$ under the second.
In both cases, the recursion \eqref{eq:Qrec} is indexed by all the
physical vertices of $\mathfrak t$, so vertices
with equal assigned depths are not identified.

We check that \eqref{eq:Qrec} gives the corresponding
common-stem moments. At distinct assigned depths,
this follows by conditional independence of the child
branches, as in \eqref{eq:childfeatures}--\eqref{eq:HT}.
If an internal child $i$ and its parent both have
assigned depth $a$, then
\[
 \E[Q_i\mid\cF_a]=Q_i
   =\prod_{j\text{ child of }i}\E[Q_j\mid\cF_a].
\]
Thus grouping factors through vertices at the same
assigned depth does not change the product.
At depth $r$, the recursion gives $Q_v=S^{|D(v)|}$,
as required for repeated terminal labels. Hence
\begin{equation}\label{eq:twomomentassignments}
 f_p(R)=\Pi_pH_{\mathfrak t}^p(\boldsymbol d),
 \qquad
 \bar f_p(R)=\Pi_pH_{\mathfrak t}^p(\bar{\boldsymbol d}).
\end{equation}
Indeed, if the first split of $T(R)$ exceeds
$p$, then
$H_{\mathfrak t}^p(\boldsymbol d)=H_{T(R)}^p$.
Otherwise the root has assigned depth $p$, so
$H_{\mathfrak t}^p(\boldsymbol d)$ depends only on
$(U_{\le p},Z_{\le p})$ and is killed by $\Pi_p$.
For the second assignment,
$H_{\mathfrak t}^p(\bar{\boldsymbol d})
=H_{\overline T_p(R)}^p$.

\subsection{The law of the physical overlap coordinates}\label{sec:treedomination}

To average errors over the physical matrices, we need
a bound for the coordinate laws $\nu_{\mathfrak t}$
defined in \eqref{eq:physicaltreelaw}. 

\begin{lemma}[Product domination]\label{lem:physicaldomination}
For every physical tree $\mathfrak t$ with $m$ internal
vertices, its coordinate law \eqref{eq:physicaltreelaw} satisfies
\begin{equation}\label{eq:physicaldomination}
 \nu_{\mathfrak t}\le\zeta^{\otimes m}.
\end{equation}
Equivalently, for every nonnegative Borel function $g$,
\[
\int_{D_{\mathfrak t}}
       g(R_{\mathfrak t}(\boldsymbol x))\,
                         \nu_{\mathfrak t}(d\boldsymbol x)
 \le \int_{D_{\mathfrak t}}
       g(R_{\mathfrak t}(\boldsymbol x))\,
                         \zeta^{\otimes m}(d\boldsymbol x).
\]
\end{lemma}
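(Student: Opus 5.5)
Since $\nu_{\mathfrak t}$ is the law of the internal overlaps of $k$ replicas sampled from $\E G^{\otimes k}$, the natural route is the Ghirlanda--Guerra identities \eqref{eq:GG}. They determine the joint law of the overlap matrix from $\zeta$ through the standard sequential construction: replica $n+1$ either copies the overlaps of a uniformly chosen earlier replica $\ell\le n$, with probability $1/n$ each, or draws a fresh overlap $R_{1,n+1}\sim\zeta$, with probability $1/n$. Together with ultrametricity \eqref{eq:GGconsequences}, this gives an explicit description of $\mu_k$. I will add the replicas one at a time and track how each internal vertex of $\mathfrak t$ gets its coordinate. The claim is that every internal vertex receives its coordinate from exactly one ``fresh'' $\zeta$-draw, with a combinatorial factor at most one.

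\textbf{Key step: the one-step bound.} The main inequality to prove is this: for $n\ge1$, Borel $B\subseteq[0,q_*]$, and $f\ge0$ a function of the first $n$ replicas,
\[
\E\langle f\,\one\{\max_{\ell\le n}R_{\ell,n+1}\in B\}\rangle\le \E\langle f\rangle\,\zeta(B).
\]
To prove it, apply \eqref{eq:GG} with test $f\,\one\{R_{1,n+1}\in B\}$ after choosing, by exchangeability, replica $1$ to be the replica attaining the maximum. In \eqref{eq:GG} the last term is then bounded by summing over the possible maximizers. More concretely, the ultrametric structure says that the new replica's attachment point is a vertex at height $x=\max_\ell R_{\ell,n+1}$ on the current tree. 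By the GG description, the law of $x$ conditional on the first $n$ replicas has density at most that of $\zeta$ restricted to values not already among the overlaps at the attachment. The mass that falls exactly on existing values corresponds to attaching at an existing internal vertex, and that case creates no new coordinate.

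\textbf{Induction over replicas.} Order the replicas so that each new internal vertex of $\mathfrak t$ is created at the moment its second child branch appears. Each internal vertex is created by exactly one replica $n+1$, namely when the new replica's branch point $x$ is strictly new and $x=x_v$. If $x$ coincides with an existing vertex value, no vertex is created and I bound the probability by one. Inducting on $n$ gives $\nu_{\mathfrak t}(B_1\times\cdots\times B_m)\le\prod_v\zeta(B_v)$ for rectangles, and extending from rectangles to all Borel sets gives \eqref{eq:physicaldomination}. The equivalent form with $g$ then follows from \eqref{eq:treecoordinates}.

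\textbf{Main obstacle.} The hard part is justifying the conditional bound ``density $\le\zeta$'' for the new branch height, including the labeled-tree event. The GG form gives a mixture: a fresh $\zeta$-draw with weight $1/n$, plus copies with weights $1/n$. Fixing the labeled tree constrains which replica is copied and where the branch attaches. I expect to handle this by writing the event $\{\mathfrak t(R)=\mathfrak t,\ \boldsymbol x\in B\}$ as an intersection of such one-step events. At each step the fresh-draw part contributes $\zeta(B_v)$ times a probability weight at most one, and the copy contributions that are consistent with creating a new vertex at a fresh height are bounded by the same $\zeta(B_v)$ through iterating \eqref{eq:GG}, as in the standard derivation of the RPC overlap law \cite[Chapter 2]{SKbook}.
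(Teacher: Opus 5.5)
Your architecture is the paper's induction run forwards: add replicas one at a time, let a newly created internal vertex contribute a factor $\zeta(B_v)$ and an attachment at an existing vertex a factor at most one, then extend from rectangles. (The paper deletes a leaf of an internal vertex all of whose children are leaves; it uses $E\subset E^-$ when that vertex has $b\ge3$ children and one application of \eqref{eq:GG} when $b=2$.) However, the inequality you call the key step is false, and the argument you give for it does not work.

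Take $\zeta$ with atoms $q_0<q_1<q_*$ of masses $\zeta_0>0$, $\zeta_1-\zeta_0$, $1-\zeta_1$, and set $n=2$, $f=\one\{R_{12}=q_1\}$, $B=\{q_1\}$. Conditionally on $R_{12}=q_1$:
\begin{itemize}
\item \eqref{eq:GG} with replica $1$ distinguished gives $\Prob(R_{13}=q_1)=(1+\zeta_1-\zeta_0)/2$;
\item \eqref{eq:GG} with replica $2$ distinguished gives $\Prob(R_{23}=q_*)=(1-\zeta_1)/2$;
\item by ultrametricity, $\max(R_{13},R_{23})=q_1$ exactly when $R_{13}=q_1$ and $R_{23}\ne q_*$, and $R_{23}=q_*$ forces $R_{13}=q_1$.
\end{itemize}
Hence the conditional probability is $\zeta_1-\zeta_0/2>\zeta_1-\zeta_0=\zeta(B)$. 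The excess is the event that replica $3$ joins the existing vertex $\{1,2\}$. So the one-step bound cannot be phrased through the value of $\max_\ell R_{\ell,n+1}$.

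Your refinement, that fresh values create vertices and existing values do not, does not repair this. Incomparable vertices can carry equal coordinates, and do so with positive probability at atoms of $\zeta$, so new vertices are created at values already present. You also cannot make replica $1$ the maximizer by exchangeability: the maximizer depends on replica $n+1$, while the test in \eqref{eq:GG} must be a function of the first $n$ replicas. Two smaller corrections: the mixture in \eqref{eq:GG} has copy terms only for $\ell=2,\ldots,n$, and it gives the law of $R_{1,n+1}$ alone, not the whole overlap vector of the new replica.

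The missing idea is to phrase the step through the position of the new leaf in the labeled tree $\mathfrak t$. Then the empirical terms of \eqref{eq:GG} vanish identically, so there are no copy contributions to bound and nothing to iterate.

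In the paper's $b=2$ case, the deleted leaf $k$ has sibling $1$; put $M=\max_{2\le j\le k-1}R_{1j}$. Ultrametricity gives $\one_E=\one_{E^-}\one\{R_{1k}>M\}$. Now apply \eqref{eq:GG} with replica $1$ distinguished, to a test depending jointly on the first $k-1$ replicas and on $R_{1k}$. Since every $R_{1j}\le M$, all empirical terms drop out, leaving exactly $\frac1{k-1}\int_{D_{\mathfrak t^-}}\int_{(M(\boldsymbol x'),q_*]}g(\boldsymbol x',x)\,\zeta(dx)\,\nu_{\mathfrak t^-}(d\boldsymbol x')$.

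In your forward order the same device works for a new vertex $v$ inserted between an existing vertex $u$ and the subtree $D$ below it. Distinguish any $\ell\in D$ and test $R_{\ell,n+1}\in B_v\cap(x_u,x_D)$. Replace the interval by $(x_u,q_*]$ if $D$ is a single leaf, and by $[0,x_D)$ if $v$ is a new root. Ultrametricity makes this event equivalent to the prescribed placement. Every $R_{\ell j}$ with $j\le n$ is either $\ge x_D$ or $\le x_u$, so the step contributes exactly $\frac1n\zeta(B_v\cap(x_u,x_D))\le\zeta(B_v)$, uniformly in the first $n$ replicas. This is what the rectangle bound needs. Attachment at an existing vertex still gives the trivial factor one, because the tree event for the first $n+1$ replicas is contained in that for the first $n$. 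With this replacement your argument becomes the paper's.
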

\begin{proof}
We use induction on the number $k$ of replicas.
For $k=2$, there is one internal vertex, whose
coordinate is $R_{12}$, so its law is exactly $\zeta$.

Suppose $k\ge3$ and fix a physical tree $\mathfrak t$.
Choose an internal vertex $v$ all of whose children
are singleton leaves, and let $b\ge2$ be their number.
Such a vertex exists by following internal children
until there are none. Delete one of its leaves,
which by exchangeability we may label $k$. Let
$R^-=(R_{ij})_{i,j\le k-1}$ and let $\mathfrak t^-$
be the tree induced by $\mathfrak t$ on the remaining
replica labels. Write
\[
 E=\{\mathfrak t(R)=\mathfrak t\},
 \qquad
 E^-=\{\mathfrak t(R^-)=\mathfrak t^-\}.
\]
Fix a nonnegative bounded Borel function $g$ on
$[0,q_*]^m$.

If $b\ge3$, all $m$ internal vertices and their
overlap coordinates survive the deletion. Write
$\boldsymbol x'$ for these coordinates. Under their
natural identification, $D_{\mathfrak t^-}=D_{\mathfrak t}$.
Since $E\subset E^-$, the definition \eqref{eq:treecoordinates}, marginal consistency and the induction hypothesis give
\begin{align*}
 \int g\,d\nu_{\mathfrak t}
 &=\int \mathbf1_E\,g(\boldsymbol x')\,\mu_k(dR)\\
 &\le\int \mathbf1_{E^-}\,g(\boldsymbol x')\,
                                      \mu_{k-1}(dR^-)
 =\int g\,d\nu_{\mathfrak t^-}
 \le\int_{D_{\mathfrak t}}g\,d\zeta^{\otimes m}.
\end{align*}

If $b=2$, label the other child of $v$ by $1$.
Deleting replica $k$ removes only the internal vertex
$v$ and leaves replica $1$ in its place. Write
$\boldsymbol x'$ for the $m-1$ surviving coordinates
and $x=x_v=R_{1k}$ for the removed coordinate. Set
\(
 M=\max_{2\le j\le k-1}R_{1j}.
\)
We have the event identity
\[
 \mathbf1_E
 =\mathbf1_{E^-}\mathbf1_{\{R_{1k}>M\}}
 \qquad \mu_k\text{-almost surely}.
\]
Indeed, on $E$, every $R_{1j}$ in the maximum occurs
at a proper ancestor of $v$, so $R_{1k}>M$.
Conversely, if $R_{1k}>M$, ultrametricity forces
$R_{jk}=R_{j1}$ for all $2\le j\le k-1$. Thus,
on $E^-$, replica $k$ attaches as the sibling of
replica $1$, giving exactly the tree $\mathfrak t$.

On $E^-$, the maximum $M$ is the surviving coordinate
of the parent of $v$, so we also write
$M=M(\boldsymbol x')$. Consequently,
\[
 D_{\mathfrak t}
 =\bigl\{(\boldsymbol x',x):
       \boldsymbol x'\in D_{\mathfrak t^-},\
       M(\boldsymbol x')<x\le q_*\bigr\}.
\]
Applying the Ghirlanda--Guerra identities \eqref{eq:GG}, all empirical terms vanish under the restriction $R_{1k}>M$, since every
$R_{1j}\le M$. Hence
\begin{align*}
 \int g\,d\nu_{\mathfrak t}
 &=\int \mathbf1_{E^-}\mathbf1_{\{R_{1k}>M\}}\,
       g(\boldsymbol x',R_{1k})\,\mu_k(dR)\\
 &=\frac1{k-1}
       \int_{D_{\mathfrak t^-}}
       \int_{(M(\boldsymbol x'),q_*]}
       g(\boldsymbol x',x)\,\zeta(dx)\,
                         \nu_{\mathfrak t^-}(d\boldsymbol x')\\
 &\le\frac1{k-1}
       \int_{D_{\mathfrak t^-}}
       \int_{(M(\boldsymbol x'),q_*]}
       g(\boldsymbol x',x)\,\zeta(dx)\,
                         \zeta^{\otimes(m-1)}(d\boldsymbol x')\\
 &=\frac1{k-1}
       \int_{D_{\mathfrak t}}g\,d\zeta^{\otimes m}
 \le\int_{D_{\mathfrak t}}g\,d\zeta^{\otimes m}.
\end{align*}
The inequality uses the induction hypothesis for
the surviving coordinates. This completes the proof.
\end{proof}

\subsection{Variation in one assigned depth}\label{sec:depthvariation}

Recall the coarse partition $\cP$ from
Section \ref{sec:cells}. When $x_v\in J\in\cP$, the
coarsening replaces the assigned depth
$\max\{p,d(x_v)\}$ by the fixed depth
$\max\{p,a(J)\}$. At a heavy atom isolated as a
singleton cell, these assignments agree. Every other
cell has $\zeta$-mass at most $2\tau$.

Fix a labeled physical tree $\mathfrak t$, a fine-cut
representation, $p<r$, and an assignment satisfying
\eqref{eq:weakdepths}. Hold all assigned depths fixed
except at one internal vertex $v$ with $b$ children,
and write $H(d)$ for
$H_{\mathfrak t}^p(\boldsymbol\ell)$ when $\ell_v=d$.
The admissible range is $p\le d\le d^+$ for the root
and $c\le d\le d^+$ otherwise, where $c$ is the
parent's assigned depth and $d^+$ is the smallest
assigned child depth. Let $[\alpha_j,e_j]$ be ordered
intervals in this range, with integer endpoints and
$e_j\le\alpha_{j+1}$. Let $\varpi_{j,d}\ge0$ be
deterministic weights supported on their integer
depths, with $\sum_d\varpi_{j,d}\le u$ for each $j$.
In the next Section \ref{sec:compactproof}, the intervals will correspond to the non-heavy cells of $\cP$, and the weights are their
overlap masses grouped by the original assigned depth.

\begin{lemma}[Variation in one assigned depth]\label{lem:depthvariation}
With the notation above,
\begin{equation}\label{eq:depthvariation}
 \sum_j\sum_d\varpi_{j,d}\,
       \E|H(d)-H(\alpha_j)|^2\le\chi_b u,
 \qquad
 \chi_b=\frac{4^b b^{2b}}{(b!)^2}.
\end{equation}
The bound is independent of the cut, $p$, and the
number of intervals.
\end{lemma}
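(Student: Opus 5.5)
We plan to prove \eqref{eq:depthvariation} with the much smaller constant $b^2$ in place of $\chi_b$; since $b^2\le\chi_b$ for all $b\ge2$, this suffices. The proof has three steps. First, reduce the variation of $H$ to the variation of the single factor $Q_v$. Second, write $Q_v$ as a product of $b$ bounded martingales in the assigned depth. Third, sum the weighted squared increments using orthogonality of martingale increments, so that the sum over intervals telescopes.

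\emph{Step 1: reduction to $Q_v$.} Hold all assigned depths other than $\ell_v$ fixed. The children $Q_i$ of $v$ do not depend on $\ell_v$. Write
\[
 M_i(d)=\E[Q_i\mid\cF_d],\qquad Q_v(d)=\prod_{i\text{ child of }v}M_i(d).
\]
We claim that $\E|H(d)-H(d')|^2\le\E|Q_v(d)-Q_v(d')|^2$ for admissible $d,d'$.

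\emph{Non-root case.} Going up from $v$ to the root, each step of the recursion \eqref{eq:Qrec} does one of two things: it takes a conditional expectation at a fixed depth (the parent depth $c$, then higher ancestors), or it multiplies by sibling factors with values in $[0,1]$ that do not involve $\ell_v$. The last step is $\E[\,\cdot\mid\cG^p_{\ell_{v_0}}]$, where $\ell_{v_0}$ is fixed. Each such operation is an $L^2$ contraction applied to the difference $Q_v(d)-Q_v(d')$, which proves the claim.

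\emph{Root case.} Here $H(d)=\E[Q_{v_0}(d)\mid\cG_d^p]$. For $\alpha\le d$, split
\[
 H(d)-H(\alpha)=\E[Q_{v_0}(d)-Q_{v_0}(\alpha)\mid\cG_d^p]
   +\bigl(\E[Q_{v_0}(\alpha)\mid\cG_d^p]-\E[Q_{v_0}(\alpha)\mid\cG_\alpha^p]\bigr).
\]
The second term vanishes. Indeed, $Q_{v_0}(\alpha)$ is $\cF_\alpha$-measurable, and $\cG_d^p$ adds to $\cG_\alpha^p$ only the state variables $U_{(\alpha,d]}$, which are independent of $\cF_\alpha$. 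The first term is handled by Jensen's inequality.

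\emph{Step 2: martingale bound.} All factors lie in $[0,1]$, so telescoping the product gives
\[
 |Q_v(d)-Q_v(\alpha)|\le\sum_i|M_i(d)-M_i(\alpha)|.
\]
By Cauchy--Schwarz, $\E|Q_v(d)-Q_v(\alpha)|^2\le b\sum_i\E|M_i(d)-M_i(\alpha)|^2$. Each $M_i$ is a martingale in the filtration $(\cF_d)$, so for $\alpha\le d$ orthogonality of increments gives
\[
 \E|M_i(d)-M_i(\alpha)|^2=\E M_i(d)^2-\E M_i(\alpha)^2.
\]
The function $\phi_i(d)=\E M_i(d)^2$ is nondecreasing in $d$ and takes values in $[0,1]$.

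\emph{Step 3: summation.} For $d\in[\alpha_j,e_j]$, monotonicity gives $\phi_i(d)-\phi_i(\alpha_j)\le\phi_i(e_j)-\phi_i(\alpha_j)$. Using $\sum_d\varpi_{j,d}\le u$,
\[
 \sum_j\sum_d\varpi_{j,d}\,\E|H(d)-H(\alpha_j)|^2
 \le b\,u\sum_i\sum_j\bigl(\phi_i(e_j)-\phi_i(\alpha_j)\bigr).
\]
The intervals are ordered with $e_j\le\alpha_{j+1}$, so the inner sum over $j$ is at most $\phi_i(d^+)-\phi_i(\alpha_1)\le1$. Summing over the $b$ children then gives the bound $b^2u\le\chi_bu$. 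No quantity in this argument depends on the cut, on $p$, or on the number of intervals.

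The main obstacle is the bookkeeping in Step 1 at the root. There the conditioning $\sigma$-algebra $\cG^p_d$ moves together with $d$, and the argument relies on the independence of the extra state coordinates $U_{(\alpha,d]}$ from $\cF_\alpha$ to see that this moving conditioning does not contribute. A related point to check is that admissibility of the assignment keeps every child depth $\ge d^+\ge d$. This ensures the children $Q_i$, and hence the martingales $M_i$, do not change as $\ell_v$ varies.
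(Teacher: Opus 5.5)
Your proof is correct. It follows a genuinely different route from the paper's and gives the better constant $b^2$, which is at most $\chi_b$.

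The paper never reduces to $Q_v$. It polarizes $\prod_i\E[Q_i\mid\cF_d]$ into the $b$th powers $\bigl(\sum_{i\in J}\E[Q_i\mid\cF_d]\bigr)^b$. These are nonnegative submartingales bounded by $b^b$ because $S\ge0$. The paper then carries this monotone structure up to $H$ itself. At the root, conditioning on $\cG^p_d$ preserves the submartingale property. At a nonroot vertex, a fixed positive map $\mathcal L$ preserves pointwise monotonicity. Finally it applies $\E|X_d-X_\alpha|^2\le\E X_d^2-\E X_\alpha^2$ to each of the $2^b$ terms, and Cauchy--Schwarz over subsets produces $\chi_b$.

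You instead remove everything above $v$ as a linear $L^2$ contraction. At the root you handle the moving $\sigma$-algebra $\cG^p_d$ correctly, using the independence of $U_{(\alpha,d]}$ from $\cF_\alpha$. You then telescope the product into increments of the individual Doob martingales $\E[Q_i\mid\cF_d]$, and orthogonality of martingale increments makes the weighted sum telescope directly.

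Your route gives a smaller constant and shows that this step uses only $|Q_i|\le1$ and boundedness of the sibling factors, not their positivity. The paper's route keeps $H(d)$ itself as a fixed linear combination of monotone sequences. That is why the paper stresses the role of $S\ge0$ at this point.
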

\begin{proof}
We use the following elementary estimate. If $X_d$ is
a submartingale with respect to a filtration $(\cH_d)_d$
and $0\le X_d\le L$, then for $\alpha\le d$,
\begin{align*}
 \E|X_d-X_\alpha|^2
 &=\E X_d^2-\E X_\alpha^2
   -2\E\bigl[X_\alpha
       (\E[X_d\mid\cH_\alpha]-X_\alpha)\bigr]
\le\E X_d^2-\E X_\alpha^2.
\end{align*}
Moreover $v_d=\E X_d^2$ is nondecreasing and belongs to
$[0,L^2]$. Consequently
\begin{equation}\label{eq:doob}
 \sum_j\sum_d\varpi_{j,d}\E|X_d-X_{\alpha_j}|^2
 \le u\sum_j(v_{e_j}-v_{\alpha_j})\le uL^2.
\end{equation}
The same proof applies to bounded nonnegative
nondecreasing functions, pointwise. Shared endpoints
cause no problem, since the increments still telescope.

Let $Q_1,\ldots,Q_b$ be the fixed child moments of $v$.
For $J\subseteq\{1,\ldots,b\}$ put
\[
 X_d^J=\Bigl(\sum_{i\in J}\E[Q_i\mid\cF_d]\Bigr)^b.
\]
The sum inside the power is a nonnegative martingale.
Thus $X^J$ is a submartingale, by convexity, and
$0\le X_d^J\le b^b$. Polarization gives
\begin{equation}\label{eq:polarization}
 \prod_{i=1}^b y_i
  =\frac1{b!}\sum_{J\subseteq\{1,\ldots,b\}}
        (-1)^{b-|J|}\Bigl(\sum_{i\in J}y_i\Bigr)^b.
\end{equation}
To check the identity, expand the powers: inclusion and
exclusion cancels every monomial missing one of the
variables, leaving $b!y_1\cdots y_b$.

If $v$ is the root, \eqref{eq:Qrec} and
\eqref{eq:polarization} express $H(d)$ as the same
linear combination of
$\widetilde X_d^J=\E[X_d^J\mid\cG_d^p]$.
These are again submartingales. In fact, for $d<d'$,
\[
 \E[\widetilde X_{d'}^J\mid\cG_d^p]
 =\E[\E[X_{d'}^J\mid\cF_d]\mid\cG_d^p]
 \ge\E[X_d^J\mid\cG_d^p]=\widetilde X_d^J.
\]

If $v$ is not the root, first average at its parent depth
$c$. The functions
\[
 G_d^J=\E[X_d^J\mid\cF_c],\qquad c\le d\le d^+,
\]
are nondecreasing in $d$, by the submartingale property.
To pass from the moment at this parent to $H(d)$,
\eqref{eq:Qrec} successively multiplies by fixed sibling
moments in $[0,1]$ and takes conditional expectations
at the ancestors, followed by the site average on the
common stem. These operations form a fixed positive
linear map $\mathcal L$ with $\mathcal L1\le1$. Thus
\[
 H(d)=\frac1{b!}\sum_J(-1)^{b-|J|}\mathcal L G_d^J,
\]
and each $\mathcal L G_d^J$ is nondecreasing and lies in
$[0,b^b]$. The map is independent of $d$ because all
sibling subtrees and ancestor depths have been fixed.
This remains true when an ancestor has the same depth
as its child: the corresponding conditional expectation
then acts as the identity.

In either case, Cauchy--Schwarz over the $2^b$ subsets
in \eqref{eq:polarization}, followed by \eqref{eq:doob}
for each term with $L=b^b$, proves
\eqref{eq:depthvariation}.
\end{proof}

The positivity of $S$ makes the powers of the conditional
moments submartingales and the sibling multipliers
positive. The estimate controls the weighted squared
changes over all intervals together. It requires no
continuity of $h$ or of its conditional moments in
the assigned depth.

\subsection{Proof of the approximation theorem}\label{sec:compactproof}
\begin{proof}[Proof of Theorem \ref{thm:compact}]
Fix the fine cut, its representation, $p<r$, and
the coarse partition $\cP$ from Section \ref{sec:cells}.
We first compare the two assignments for each fixed
physical matrix, then integrate the error over its
physical overlap coordinates.

\emph{Changing the assigned depths.}
Fix a physical tree $\mathfrak t$ with $m\le k-1$
internal vertices and a vector
$\boldsymbol x\in D_{\mathfrak t}$. Recall the two
assignments
\[
 d_v=\max\{p,d(x_v)\},
 \qquad
 \bar d_v=\bar d_p(x_v)
 \qquad(v\in\cB(\mathfrak t)).
\]
Choose an ordering $v_1,\ldots,v_m$ of the internal
vertices, depending only on $\mathfrak t$, in which
every parent precedes its children. Define
\[
 d_u^{(j)}=
 \begin{cases}
  \bar d_u,&u\in\{v_1,\ldots,v_j\},\\
  d_u,&u\notin\{v_1,\ldots,v_j\},
 \end{cases}
 \qquad j=0,\ldots,m.
\]
Thus $\boldsymbol d^{(0)}=\boldsymbol d$ and
$\boldsymbol d^{(m)}=\bar{\boldsymbol d}$.
The physical tree and $\boldsymbol x$ stay fixed;
at each step we replace one assigned integer depth.
All physical vertex labels are retained, including
when assigned depths coincide.

Every intermediate assignment satisfies
\eqref{eq:weakdepths}. Indeed, at the step for a
nonroot vertex $v$, its parent already has its coarse
depth and its children still have their original depths.
For each internal child $i$,
\[
 \bar d_{\mathrm{parent}(v)}
       \le\bar d_v\le d_v\le d_i.
\]
For the root the lower bound is $p$, and singleton
leaves keep depth $r$.

Write $b_v$ for the number of children of $v$, and set
\[
 \Delta_{v_j}(\boldsymbol x)
   =H_{\mathfrak t}^p(\boldsymbol d^{(j-1)})
      -H_{\mathfrak t}^p(\boldsymbol d^{(j)}).
\]
By \eqref{eq:twomomentassignments},
\[
 f_p(R_{\mathfrak t}(\boldsymbol x))
       -\bar f_p(R_{\mathfrak t}(\boldsymbol x))
 =\Pi_p\sum_{v\in\cB(\mathfrak t)}
                         \Delta_v(\boldsymbol x).
\]
The $L^2$ contraction of $\Pi_p$ and Cauchy--Schwarz give
\begin{equation}\label{eq:coarsetelescope}
 \E\bigl|(f_p-\bar f_p)(R_{\mathfrak t}(\boldsymbol x))\bigr|^2
 \le m\sum_v\E|\Delta_v(\boldsymbol x)|^2.
\end{equation}
Here and below $\E$ averages the canonical uniform
variables in the fixed representation.

\emph{Passing to the product measure.}
The contribution of this tree structure to the error
in \eqref{eq:compact} is
\[
 \mathcal E_{\mathfrak t}
 :=\int_{\{\mathfrak t(R)=\mathfrak t\}}
       \E|f_p(R)-\bar f_p(R)|^2\,\mu_k(dR).
\]
Using the coordinate identity \eqref{eq:treecoordinates},
the pointwise bound \eqref{eq:coarsetelescope}, and
then Lemma \ref{lem:physicaldomination}, we obtain
\begin{align}
 \mathcal E_{\mathfrak t}
 &=\int_{D_{\mathfrak t}}
       \E\bigl|(f_p-\bar f_p)(R_{\mathfrak t}(\boldsymbol x))\bigr|^2
                    \,\nu_{\mathfrak t}(d\boldsymbol x)
                    \notag\\
 &\le m\sum_v\int_{D_{\mathfrak t}}
       \E|\Delta_v(\boldsymbol x)|^2\,
                         \nu_{\mathfrak t}(d\boldsymbol x)
\le m\sum_v\int_{D_{\mathfrak t}}
       \E|\Delta_v(\boldsymbol x)|^2\,
                         \zeta^{\otimes m}(d\boldsymbol x).
                         \label{eq:treeerrorproduct}
\end{align}
The last step applies measure domination to a
nonnegative integrand.

\emph{Integrating one physical overlap.}
For a vertex $v$, fix the other physical coordinates
$\boldsymbol x_{-v}$ and write
\[
 A_v(\boldsymbol x_{-v})
 :=\{x\in[0,q_*]:
              (\boldsymbol x_{-v},x)\in D_{\mathfrak t}\}.
\]
Outside the projection of $D_{\mathfrak t}$ onto the
remaining coordinates, this section is empty and
the inner integral below is zero. Thus Tonelli's
theorem gives
\begin{align}
 &\int_{D_{\mathfrak t}}
       \E|\Delta_v(\boldsymbol x)|^2\,
                         \zeta^{\otimes m}(d\boldsymbol x)
                         \notag\\
 &\quad=\int_{[0,q_*]^{m-1}}
       \left(\int_{A_v(\boldsymbol x_{-v})}
          \E|\Delta_v(\boldsymbol x_{-v},x)|^2\,\zeta(dx)\right)
                    \zeta^{\otimes(m-1)}(d\boldsymbol x_{-v}).
                         \label{eq:physicalslice}
\end{align}
We now bound the inner integral uniformly in
$\boldsymbol x_{-v}$.

Fix a nonempty section and abbreviate it by $A_v$.
First take a single $x\in A_v$, and let $J\in\cP$
and $I_a$ be the coarse and fine intervals containing
$x$. Recall that
\[
 d(x)=a,
 \qquad
 a(J)=\min_{y\in J}d(y)
     =\min\{a':J\cap I_{a'}\ne\emptyset\}.
\]
The original and coarse depths assigned to $v$ are
therefore
\[
 d_v=\max\{p,a\},
 \qquad
 \bar d_v=\max\{p,a(J)\}=:\alpha_J.
\]
Thus every original depth arising from the coarse
interval $J$ is replaced by the same depth $\alpha_J$.
This destination is determined by the whole interval
$J$, independently of the section $A_v$.

At the step for $v$, all other assigned depths are
fixed once $\boldsymbol x_{-v}$ is fixed, because
the order of the moves depends only on $\mathfrak t$.
Write $H(d)$ for the resulting moment when $v$ has
assigned depth $d$ and these other depths are held
fixed. The same function $H$ is used for every
$x\in A_v$, and
\[
 \Delta_v(\boldsymbol x_{-v},x)
 =H(\max\{p,a\})-H(\alpha_J)
 \qquad(x\in J\cap I_a\cap A_v).
\]

If $J=\{x_0\}$ is an isolated heavy atom, then
$a(J)=d(x_0)$, so the original and coarse assigned
depths agree and its contribution is zero.
In the following sums, $J$ ranges over the remaining
members of $\cP$ that meet $A_v$; each satisfies
$\zeta(J)\le2\tau$.

For a fixed such $J$, only the fine intervals meeting
$J\cap A_v$ contribute to the integral. The largest
starting depth occurring in this section is
\[
 e_J:=\max_{x\in J\cap A_v}\max\{p,d(x)\}.
\]
The maximum exists because $d$ has finite range.
Thus $\alpha_J$ is determined by the whole interval
$J$, whereas $e_J$ is determined by $J\cap A_v$.
The depth $\alpha_J$ need not itself occur as an
original assigned depth in this section.

On each piece $J\cap I_a\cap A_v$, the difference
$\Delta_v$ is constant as a function of $x$.
Grouping these pieces by their starting depth
$d=\max\{p,a\}$, define
\[
 \varpi_{J,d}
 :=\sum_{\substack{0\le a\le r\\\max\{p,a\}=d}}
       \zeta(J\cap I_a\cap A_v),
 \qquad \alpha_J\le d\le e_J.
\]
In particular, fine intervals with $a\le p$ all
contribute to the weight at $d=p$. Splitting the
integral into these pieces gives
\[
 \int_{J\cap A_v}
       \E|\Delta_v(\boldsymbol x_{-v},x)|^2\,\zeta(dx)
 =\sum_{d=\alpha_J}^{e_J}
       \varpi_{J,d}\,\E|H(d)-H(\alpha_J)|^2.
\]
The total weight for this coarse interval is small:
\[
 \sum_{d=\alpha_J}^{e_J}\varpi_{J,d}
 =\zeta(J\cap A_v)\le\zeta(J)\le2\tau.
\]
We check that Lemma \ref{lem:depthvariation} applies
to these changes of depth. Let $c$ be the current
parent depth, or $c=p$ if $v$ is the root, and let
$d^+$ be the smallest current child depth, with
singleton leaves assigned depth $r$.
For a nonroot vertex, the parent's physical overlap
belongs to the same or an earlier coarse interval
than $J$. Since the parent has already received its
coarse assignment, $c\le\alpha_J$; for the root,
this follows from $c=p$.
The children still have their original assignments,
and every $x\in A_v$ lies below the physical overlaps
of the internal children. Consequently,
\[
 c\le\alpha_J\le\max\{p,d(x)\}\le e_J\le d^+
 \qquad(x\in J\cap A_v).
\]
Thus $[\alpha_J,e_J]$ lies in the admissible range
for $H(d)$. Moreover, if $J$ precedes $J'$, every
point of $J$ lies below every point of $J'$.
Monotonicity of $d$ therefore gives
\[
 e_J\le\max\{p,a(J')\}=\alpha_{J'}.
\]
The intervals are ordered as required by the lemma;
neighboring intervals may share an endpoint.

Applying Lemma \ref{lem:depthvariation} to all these
intervals together, with $u=2\tau$, gives
\begin{align}
 \int_{A_v}
       \E|\Delta_v(\boldsymbol x_{-v},x)|^2\,\zeta(dx)
 &=\sum_J\sum_{d=\alpha_J}^{e_J}
       \varpi_{J,d}\,\E|H(d)-H(\alpha_J)|^2
       \notag\\
 &\le2\chi_{b_v}\tau.
       \label{eq:coordinatecoarsening}
\end{align}
The lemma bounds the entire sum over $J$, so there
is no factor depending on the number of coarse
intervals or fine-cut levels.

This bound is uniform in $\boldsymbol x_{-v}$.
Since the outer product measure in
\eqref{eq:physicalslice} has total mass one,
substituting \eqref{eq:coordinatecoarsening} into
\eqref{eq:physicalslice} and then
\eqref{eq:treeerrorproduct} gives
\[
 \mathcal E_{\mathfrak t}
 \le2m\sum_{v\in\cB(\mathfrak t)}\chi_{b_v}\tau
 \le2m^2\max_{2\le b\le k}\chi_b\,\tau.
\]
There are finitely many labeled physical trees on
$k$ replica leaves, and each has $m\le k-1$ internal
vertices. Their events partition the overlap space.
Summing proves \eqref{eq:compact}, uniformly in $p$
and in the fine cut.
\end{proof}

\section{Completion of the induction}\label{sec:closing}

We prove the remaining induction target by combining the tests of Section \ref{sec:graphs} with the finite approximation of Section \ref{sec:compactness}. We first reduce the target to finitely many cell averages, then use the fixed physical overlap law to show that each of these averages tends to zero.

\begin{proposition}[Completion of the induction]\label{prop:closing}
Fix $k\ge2$ and suppose that $\kappa_m\to0$ for every
$1\le m<k$. Then, as $\delta,\eta\to0$,
\begin{equation}\label{eq:targetproved}
 \sup_{p<r}\sum_T\mu_k^p(T)\|\Pi_pH_T^p\|_p^4
       \longrightarrow0.
\end{equation}
Consequently, $\kappa_k\to0$.
\end{proposition}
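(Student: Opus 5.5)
We will write the target through the fixed-law identity \eqref{eq:fixedlawenergy}, so that it suffices to show $\sup_{p<r}\int\|f_p(R)\|_p^4\,\mu_k(dR)\to0$. Fix $\tau>0$ and the coarse partition $\cP$ of Section \ref{sec:cells}. The off-diagonal memberships of $R$ in members of $\cP$ define finitely many Borel cells of $\cU_k^{\rm lab}$. Symmetrizing over simultaneous relabelings gives invariant cells $E_1,\ldots,E_N$, with $N$ depending on $k$ and $\tau$ but not on the cut. On each invariant cell, $\bar f_p$ takes values that agree up to relabeling of the replicas. Since the common-stem moments are unchanged by such relabelings, $\bar f_p$ is constant on $E_c$, say equal to $g_{p,c}$.

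We then split $\|f_p\|_p^4$ into the coarse part and the error, using the triangle inequality and $\|f_p\|_p,\|\bar f_p\|_p\le1$. For the error we use \eqref{eq:normconversion}, which gives $\|f_p-\bar f_p\|_p^4\le 4\E|f_p-\bar f_p|^2$ (the bound $|f_p-\bar f_p|\le2$ supplies the constant). Theorem \ref{thm:compact} then bounds the error part by $C\sqrt{C_k\tau}$ after Cauchy--Schwarz in $\mu_k$. For the coarse part,
\[
\int\|\bar f_p\|_p^4\,d\mu_k=\sum_c\mu_k(E_c)\|g_{p,c}\|_p^4 .
\]
So it suffices to show that $\mu_k(E_c)\,g_{p,c}\to0$ in $\|\cdot\|_p$ for each fixed cell. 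Cells with $\mu_k(E_c)=0$ contribute nothing.

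To do this we test with the indicator of the cell. Write
\[
\mu_k(E_c)g_{p,c}=\int \one_{E_c}(R)\bar f_p(R)\,\mu_k(dR).
\]
Replace $\bar f_p$ by $f_p$; the cost is again $O(\sqrt\tau)$ by Theorem \ref{thm:compact}. This leaves $\int\one_{E_c}(R)f_p(R)\,\mu_k(dR)$. We approximate $\one_{E_c}$ by a continuous invariant $\psi$ with $\int|\one_{E_c}-\psi(q(T(R)))|\,d\mu_k$ small. Two ingredients make this possible: regularity of the fixed measure $\mu_k$, and \eqref{eq:physicalquantization}, which says that $q(T(R))$ is uniformly within $\max(\delta,\eta)$ of $R$. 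Concretely, take $\psi$ continuous and invariant with $\int|\one_{E_c}(R)-\psi(R)|\,d\mu_k<\varepsilon$. Then $\psi(q(T(R)))-\psi(R)\to0$ uniformly by uniform continuity. Since $\|f_p\|_p\le1$, the resulting error is at most $\varepsilon+o(1)$.

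By \eqref{eq:fixedlawtests}, the remaining main term is $\sum_T\mu_k^p(T)\psi(q(T))\Pi_pH_T^p$. This tends to zero uniformly in $p$ by \eqref{eq:testedF2} of Proposition \ref{prop:tests}, using the induction hypothesis. Letting first the cut become fine, then $\varepsilon\to0$, then $\tau\to0$ proves \eqref{eq:targetproved}. The final claim $\kappa_k\to0$ then follows from Lemma \ref{lem:branch}, as explained in Section \ref{sec:target}.

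The main obstacle is the error term $\int\|\one_{E_c}-\psi(q(T))\|\cdot\|f_p\|_p\,d\mu_k$. The difficulty is that $\|\cdot\|_p$ is a norm of functions of the uniform variables while we integrate over $R$. We must use Minkowski's inequality for the norm under the integral in $\mu_k$, that is, $\|\int g(R)f_p(R)\,d\mu_k\|_p\le\int|g(R)|\,\|f_p(R)\|_p\,d\mu_k$. This needs a check that $\|\cdot\|_p$ behaves as a genuine norm on these vector-valued integrals, which Lemma \ref{lem:normfacts} should supply.

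A second point needs care. The cell boundaries may carry $\mu_k$-mass, for instance at heavy atoms. So the continuous approximation must be in $L^1(\mu_k)$ rather than uniform, and the quantization shift must be handled through uniform continuity of $\psi$, not of the indicator.
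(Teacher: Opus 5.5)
Your overall architecture is the paper's. You reduce via \eqref{eq:fixedlawenergy} to the fixed law $\mu_k$, use the invariant cells built from $\cP$, and control the coarsening with Theorem~\ref{thm:compact} and \eqref{eq:normconversion}. You then kill each cell by approximating $\one_{E_c}$ in $L^1(\mu_k)$, shifting $\psi(R)$ to $\psi(q(T(R)))$ via \eqref{eq:physicalquantization}, and invoking \eqref{eq:testedF2}. Those steps are sound. Your Minkowski worry is also harmless: the pointwise bound $|\int g\,f_p\,d\mu_k|\le\int|g|\,d\mu_k$ and the first inequality in \eqref{eq:normconversion} already give $\|\int g\,f_p\,d\mu_k\|_p\le\int|g|\,d\mu_k$.

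The gap is in how you close the coarse part. You take $g_{p,c}$ to be the value of $\bar f_p$ on $E_c$ and reduce to showing $\mu_k(E_c)g_{p,c}\to0$. Your argument does not establish this, and it need not hold at fixed $\tau$: after replacing $\bar f_p$ by $f_p$ you only get $\limsup\sup_p\|\mu_cg_{p,c}\|_p\le C\sqrt\tau$ for each cell. To use this in $\sum_c\mu_c\|g_{p,c}\|_p^4=\sum_c\mu_c^{-3}\|\mu_cg_{p,c}\|_p^4$ you must divide by $\mu_c$. A uniform per-cell error $C\sqrt\tau$ then leaves $C^4\tau^2\sum_c\mu_c^{-3}$ after the fine-cut limit. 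As $\tau\downarrow0$ the number of cells grows and their masses shrink. Already for $k=2$ and $\zeta$ with a nonatomic part, there are at least of order $\tau^{-1}$ cells of mass at most $2\tau$, so this bound is at least of order $\tau^{-2}$. The stated order of limits (cut, then $\varepsilon$, then $\tau$) therefore does not prove \eqref{eq:targetproved}.

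The repair is the paper's choice $g_{p,c}=\mu_c^{-1}\int_{E_c}f_p\,d\mu_k$, the cell average of $f_p$ itself. For each realization of the uniform variables, this is the $L^2(\mu_k)$ projection of $f_p$ onto cell-constant functions. Since $\bar f_p$ is cell-constant, $\int\E|f_p-g_p|^2\,d\mu_k\le\int\E|f_p-\bar f_p|^2\,d\mu_k\le C_k\tau$, so $\bar f_p$ serves only as a competitor. Now $\mu_cg_{p,c}=\int\one_{E_c}f_p\,d\mu_k$ exactly. Your indicator-test argument then gives $\sup_p\|g_{p,c}\|_p\to0$ at fixed $\tau$ with no $\tau$-dependent remainder, and dividing by the fixed $\mu_c>0$ is harmless. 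The bound $\int\|f_p\|_p^4\,d\mu_k\le32C_k\tau+8\sum_c\mu_c\|g_{p,c}\|_p^4$ then closes.

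Equivalently, if you keep $\bar f_p$, estimate the replacement error cell by cell. By \eqref{eq:normconversion} and Jensen, $\mu_c\|\mu_c^{-1}\int_{E_c}(\bar f_p-f_p)\,d\mu_k\|_p^4\le4\int_{E_c}\E|f_p-\bar f_p|^2\,d\mu_k$. Summed over $c$, this is at most $4C_k\tau$, with no factors $\mu_c^{-3}$.
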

\begin{proof}
Recall that $\mu_k$ is the fixed law of the physical
overlap matrix $R$, and $f_p(R)$ is the centered
common-stem moment defined in \eqref{eq:physicalf}.
By \eqref{eq:fixedlawenergy}, it suffices to prove
\[
 \sup_{p<r}\int\|f_p(R)\|_p^4\,\mu_k(dR)
       \longrightarrow0.
\]
We will approximate $f_p$ by its averages on a fixed
finite partition of the overlap space, using the coarse
partition $\cP$ from Section \ref{sec:cells}.

Fix $0<\tau<1$ and its corresponding coarse partition $\cP$.
Write $J(x)$ for the member containing $x$.
Group the finitely many membership matrices
$(J(R_{\ell\ell'}))_{\ell<\ell'}$ by permutations
of the replica labels, and let $E_c$ be the corresponding
inverse images in $\cU_k^{\rm lab}$. These are invariant
Borel sets, and the moments $H_T^p$ are unchanged by
relabeling, so $\bar f_p$ is constant on each $E_c$.
The cells and their masses depend only on $k,\zeta,\tau$;
the values of $\bar f_p$ may depend on the fine cut
and on $p$. Discard cells of zero $\mu_k$-mass, write
$\mu_c=\mu_k(E_c)>0$, and define
\begin{equation}\label{eq:cellaverage}
 g_{p,c}=\frac1{\mu_c}\int_{E_c}f_p(R)\,\mu_k(dR),
 \qquad g_p(R)=g_{p,c}\quad(R\in E_c).
\end{equation}
This average integrates only the overlap matrix $R$;
the canonical state and site variables are held fixed.
The cells and the positive denominators $\mu_c$ are
fixed before the fine-cut limit, although $f_p$ and
the cell averages may change with the cut and with $p$.

For each realization of the canonical variables,
$g_p$ is the conditional expectation of $f_p$ with
respect to the cell partition in the $R$ variable.
It is therefore the best $L^2(\mu_k)$ approximation
by a cell-constant function. The coarsened moment
$\bar f_p$ is constant on the same cells, so Theorem
\ref{thm:compact} gives
\begin{equation}\label{eq:cellprojection}
 \sup_{p<r}\int\E|f_p(R)-g_p(R)|^2\,\mu_k(dR)
 \le \sup_{p<r}\int\E|f_p(R)-\bar f_p(R)|^2\,\mu_k(dR)
 \le C_k\tau.
\end{equation}
Here $\E$ averages the canonical state and site variables.
Since $|f_p|,|g_p|\le1$, their difference is bounded
by two. The comparison \eqref{eq:normconversion} yields
\[
 \sup_{p<r}\int\|f_p(R)-g_p(R)\|_p^4\,\mu_k(dR)
       \le4C_k\tau.
\]
The triangle inequality and $(x+y)^4\le8(x^4+y^4)$
then give
\begin{equation}\label{eq:energy}
 \sup_{p<r}\int\|f_p(R)\|_p^4\,\mu_k(dR)
 \le32C_k\tau
       +8\sum_c\mu_c\sup_{p<r}\|g_{p,c}\|_p^4.
\end{equation}
Thus, for fixed $\tau$, it remains to show that the
finitely many cell averages $g_{p,c}$ tend to zero
in norm, uniformly in $p$.

Proposition \ref{prop:tests}, under the induction
hypothesis, and \eqref{eq:fixedlawtests} give
\[
 \sup_{p<r}\Bigl\|\int\psi(q(T(R)))f_p(R)\,\mu_k(dR)
                  \Bigr\|_p\longrightarrow0
\]
for every fixed continuous invariant $\psi$.
By \eqref{eq:physicalquantization} and uniform
continuity, we may replace $\psi(q(T(R)))$ by
$\psi(R)$: since $|f_p(R)|\le1$, the error in norm
is bounded by the uniform modulus of continuity
of $\psi$ at $\max(\delta,\eta)$. Hence
\begin{equation}\label{eq:weakH}
 \sup_{p<r}\Bigl\|\int\psi(R)f_p(R)\,\mu_k(dR)
                  \Bigr\|_p\longrightarrow0.
\end{equation}
Here the benefit of using the physical overlap matrix
is explicit: both the measure $\mu_k$ and the test
$\psi(R)$ are independent of the fine cut. We can
therefore approximate each fixed cell indicator once
in this measure and use that approximation for all cuts.

More precisely, every bounded Borel invariant test
$\psi$ can be approximated in $L^1(\mu_k)$ by continuous
functions. Averaging the approximations over replica
permutations makes them invariant without increasing
their error, because $\mu_k$ is permutation invariant.
For such an approximation $\varphi$, the triangle
inequality and $|f_p(R)|\le1$ give
\[
 \Bigl\|\int(\psi-\varphi)(R)f_p(R)\,\mu_k(dR)\Bigr\|_p
       \le\int|\psi-\varphi|\,d\mu_k,
\]
uniformly in the cut and in $p$. First taking the
fine-cut limit with $\varphi$ fixed, then sending its
$L^1$ approximation error to zero, extends
\eqref{eq:weakH} to $\psi$. In particular, it applies
to $\psi=\one_{E_c}$, including cells containing atoms.
Dividing by the fixed positive mass $\mu_c$ gives
\begin{equation}\label{eq:cellaveragesmall}
 \sup_{p<r}\|g_{p,c}\|_p\longrightarrow0.
\end{equation}
No individual shape probability or cut-dependent
mass is divided out.

For fixed $\tau$, the finite sum in \eqref{eq:energy}
now tends to zero. The upper limit of its left side
is at most $32C_k\tau$. Letting $\tau\downarrow0$
and using \eqref{eq:fixedlawenergy} proves
\eqref{eq:targetproved}.

Finally, the contraction of $\Pi_p$, the triangle inequality,
and Lemma \ref{lem:branch} give
\[
 \kappa_k\le
 8\sup_{p<r}\sum_T\mu_k^p(T)\|\Pi_pH_T^p\|_p^4
       +8k^4B_k\max_{m<k}\kappa_m
       \longrightarrow0.
\]
This finished the proof.
\end{proof}

\section{Proof of the M\'ezard–Parisi lower bound}\label{sec:bridge}

We now prove Theorems \ref{thm:variational} and \ref{thm:MP}. Continuity transfers the finite-spin approximation to the cavity functional. We then remove the global mixture, the root site coordinate and the perturbation, and combine the resulting variational bound with Theorem \ref{thm:cavitybound} and the Franz--Leone upper bound.

Recall the trial classes $\cT_{\rm root}$ and
$\cT_0$ from Section \ref{sec:whydefects}. The latter
is the trial class in the Franz--Leone bound as stated
in \cite{rRSB}.

\begin{proof}[Proof of Theorems \ref{thm:variational} and \ref{thm:MP}]
Fix $\epsilon>0$ and $G\in\cG_\epsilon$. We first
check continuity of \eqref{eq:PG} under the spin
approximation of Proposition \ref{prop:MPapprox}.
Truncate clause arities, counts and coupling magnitudes
in both terms of the functional. The arguments of the
logarithms then lie in a fixed compact subset of
$(0,\infty)$, uniformly over the spin law. Approximate
the logarithm there by a polynomial. Each power of
the Gibbs average is an average over finitely many
replicas. By \eqref{eq:extension}, the clause
Boltzmann factors are multiaffine in their incoming
magnetizations, and different incoming arguments use
distinct independent site coordinates. Their products
therefore expand into finitely many spin moments:
each site--replica pair occurs at most once in a
monomial. Proposition \ref{prop:MPapprox} gives
convergence of these moments, and hence of each
truncated functional.

The truncation error is uniform over the spin law:
deleting clauses changes either logarithm by at most
the sum of their absolute bounds. In particular, the
REM arity tail costs at most
$\sum_{d>D}(2d-1)\E J_d\to0$; integrability of the
same clause bounds controls the count and magnitude
tails. Taking the fine-cut limit at fixed truncation
and polynomial approximation, then removing both,
proves convergence of the full functional evaluated
on the resampled field \eqref{eq:MProot} to
$\cP_\epsilon(G)$.

\emph{1. Remove the global mixture.}
Conditionally on $U_*=u$, the resampled field is the
trial with function $\widehat h_u$. Since $U_*$ is
averaged outside the logarithms,
\[
 \cP_\epsilon(G)
 =\lim_{\delta,\eta\to0}
       \E_{U_*}\cP_\epsilon(r,\boldsymbol\zeta,\widehat h_{U_*})
 \ge\inf_{\cT_{\rm root}}\cP_\epsilon.
\]
This proves Theorem \ref{thm:variational}. No
uniformity over $G$ is needed, since the trial
infimum is the same for every $G$.

\emph{2. Remove the root site coordinate.}
Unlike $U_*$, the coordinates $Z_*^I$ are independent
across input sites, so fixing one common value would
change the trial. Instead, given a rooted trial
$(r,\boldsymbol\zeta,h_0)$, prepend a level with
parameter $0<\zeta'<\zeta_0$ and use
\[
 \widetilde h_0(x_0,x_1,\ldots,x_{r+1})
       =h_0(x_1,\ldots,x_{r+1}).
\]
This trial belongs to $\cT_0$: the old root slot is
now at depth one, and the new root is ignored.
To compare the functionals, condition on the clause
disorder in either expected logarithm. After the
original $r$ steps of the RPC recursion, let $Y_i$
be the remaining function of the root site bundle
$\omega=(Z_*^I)_I$, for term $i=1,2$. The rooted
term is $\E_\omega Y_i$; the extra level replaces it
by $\zeta'^{-1}\log\E_\omega e^{\zeta'Y_i}$, by the
same recursion \cite[Theorem 2.9]{SKbook}. If
$\Theta_i$ is the sum of the absolute clause bounds
in that term, including $|g|$ in the cavity term,
then $|Y_i|\le\Theta_i$. Jensen's inequality and
Hoeffding's lemma give
\begin{equation}\label{eq:rootcompare}
 0\le\frac1{\zeta'}\log\E_\omega e^{\zeta'Y_i}
             -\E_\omega Y_i
       \le\frac{\zeta'}2\Theta_i^2,
       \qquad i=1,2.
\end{equation}
The second moments of $\Theta_i$ are finite by
Section \ref{sec:integrability}. Averaging over the
disorder changes the whole functional by at most
$\frac{\zeta'}2\max_i\E\Theta_i^2$. Thus every rooted
trial is approximated in functional value by trials
in $\cT_0$. Together with
$\cT_0\subseteq\cT_{\rm root}$, this proves
\begin{equation}\label{eq:sameinf}
 \inf_{\cT_{\rm root}}\cP_\bullet
       =\inf_{\cT_0}\cP_\bullet,
       \qquad \cP_\bullet=\cP_\epsilon\text{ or }\cP_0.
\end{equation}

\emph{3. Remove the perturbation.}
Deleting its clauses from both logarithms gives,
uniformly over all trials,
\begin{equation}\label{eq:Premove}
 |\cP_\epsilon(r,\boldsymbol\zeta,h_0)
       -\cP_0(r,\boldsymbol\zeta,h_0)|
 \le\E|g|+\sum_{d\ge2}(2d-1)\E J_d
 \le C\sqrt\epsilon.
\end{equation}
Here $d$ and $d-1$ are the two Poisson means; the
last bound follows from \eqref{eq:tablemax}.

We can now prove Theorem \ref{thm:MP}. By Theorem \ref{thm:cavitybound}, there
is $G_\epsilon\in\cG_\epsilon$ with
$\liminf_NF_N^\epsilon\ge\cP_\epsilon(G_\epsilon)$.
Theorem \ref{thm:variational}, \eqref{eq:Premove}
and the finite-model estimate \eqref{eq:pressurepert}
therefore give
\[
 \liminf_{N\to\infty}F_N^{\rm model}
 \ge\cP_\epsilon(G_\epsilon)-C\sqrt\epsilon
 \ge\inf_{\cT_{\rm root}}\cP_0-2C\sqrt\epsilon.
\]
Letting $\epsilon\downarrow0$, after the fine-cut
limit at fixed $\epsilon$, proves \eqref{eq:lowerfinal}.
For the two classes of models in \eqref{eq:MPformula}, the
Franz--Leone bound \cite{FL,PT,Talagrand,rRSB} and
\eqref{eq:sameinf} give
\[
 F_N^{\rm model}\le\inf_{\cT_0}\cP_0
                 =\inf_{\cT_{\rm root}}\cP_0.
\]
The matching bounds prove the existence of the limit
and the formula \eqref{eq:MPformula}.
\end{proof}

\appendix
\section{Elementary facts about the covariance norm}\label{app:norm}

\begin{lemma}\label{lem:normfacts}
The quantity $\|f\|_p$ in \eqref{eq:Xnorm} is a norm on the bounded kernels modulo equality almost everywhere. It satisfies the triangle inequality, the contraction properties stated after \eqref{eq:Xnorm}, and
\begin{equation}\label{eq:mixedkernel}
 \E_{U_{\le p},Z,Z'}\left|\E_u f(u,Z)g(u,Z')\right|^2
       \le\|f\|_p^2\|g\|_p^2 .
\end{equation}

\end{lemma}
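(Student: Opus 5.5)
The plan is to realize $\|f\|_p$ as an ordinary $L^4$-type norm of a Hilbert-space-valued object, after which the norm axioms and the contractions follow from standard inequalities. Fix $U_{\le p}$ and regard $f(U_{\le p},\cdot,\cdot)$ as the integral kernel of an operator $A_f:L^2(dU_T)\to L^2(dZ)$. The inner integral in \eqref{eq:Xnorm} is the kernel of $A_fA_f^*$, so
\[
 \iint\Bigl|\int f(U_T,Z)f(U_T,Z')\,dU_T\Bigr|^2dZ\,dZ'=\|A_fA_f^*\|_{HS}^2=\operatorname{tr}\bigl((A_f^*A_f)^2\bigr)=\|A_f\|_{S_4}^4 ,
\]
where $S_4$ is the Schatten-4 class. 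Boundedness of $f$ on a probability space makes $A_f$ Hilbert--Schmidt, so this quantity is finite. Consequently
\[
 \|f\|_p=\Bigl(\E_{U_{\le p}}\|A_{f(U_{\le p})}\|_{S_4}^4\Bigr)^{1/4}
\]
is the $L^4(dU_{\le p})$ norm of the $S_4$-valued map $U_{\le p}\mapsto A_{f(U_{\le p})}$. Since $S_4$ is a normed space, Minkowski's inequality in $L^4$ gives the triangle inequality, and homogeneity is clear. Definiteness holds because $\|A\|_{S_4}=0$ forces $A=0$, hence $f=0$ almost everywhere.

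The contractions are compositions with operators of norm at most one, combined with the ideal inequality $\|BAC\|_{S_4}\le\|B\|\,\|A\|_{S_4}\|C\|$. Centering gives $A_{\Pi_pf}=A_f(I-P_1)$, where $P_1$ is the projection onto constants in $L^2(dU_T)$. Averaging over $U_B$ gives $A_{\E_{U_B}f}=A_fJ$, where $J:L^2(dU_A)\to L^2(dU_A\,dU_B)$ is the isometric embedding of functions not depending on $U_B$. Averaging over the extra site prefix $W$ gives $A_{\E_Wf}=EA_f$, where $E$ is the conditional expectation (integration in $W$) from $L^2(dZ\,dW)$ to $L^2(dZ)$, a contraction. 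Each of these factors has operator norm at most one, so the three contraction properties follow pointwise in $U_{\le p}$ and then after averaging.

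For \eqref{eq:normconversion}, use $\|A\|_{S_4}^4\le\|A\|_{S_2}^4=(\E_{U_T,Z}|e|^2)^2$ to get the first inequality. The second follows from $\E_{U_T,Z}|e|^2\le M^2$, which gives $(\E_{U_T,Z}|e|^2)^2\le M^2\,\E_{U_T,Z}|e|^2$, and then averaging over $U_{\le p}$. For the mixed-kernel bound \eqref{eq:mixedkernel}, the inner expression is $\|A_fA_g^*\|_{HS}^2$. By the Hölder inequality for Schatten classes, $\|A_fA_g^*\|_{S_2}\le\|A_f\|_{S_4}\|A_g\|_{S_4}$; square this and apply Cauchy--Schwarz in $U_{\le p}$.

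The main obstacle is measurability and integrability of $U_{\le p}\mapsto A_{f(U_{\le p})}$ in $S_4$, since this is needed for the vector-valued Minkowski inequality. I would avoid it by writing the triangle inequality directly at the level of scalar integrands: apply the pointwise-in-$U_{\le p}$ Schatten triangle inequality and then scalar Minkowski to the resulting functions of $U_{\le p}$, each of which is measurable by Fubini.
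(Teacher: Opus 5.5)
Your proposal is correct. It reaches the result by a different, operator-theoretic route, whereas the paper's argument is elementary.

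The paper fixes $U_{\le p}$ and works with the Gram kernel $K_f(u,u')=\int f(u,Z)f(u',Z)\,dZ$ on the state variables, which in your notation is the kernel of $A_f^*A_f$. Its steps are:
\begin{itemize}
\item Fubini gives $\iint|\int f(u,Z)g(u,Z')\,du|^2\,dZ\,dZ'=\int K_fK_g$, and Cauchy--Schwarz in $L^2(du\,du')$ then gives the mixed bound.
\item The triangle inequality comes from expanding the kernel of $f+g$ into four terms and applying the mixed bound to the cross terms, followed by Minkowski over $U_{\le p}$.
\item The state contractions follow from $K_{Pf}=P_uP_{u'}K_f$ with $P$ an $L^2$ contraction.
\item The site contraction follows from Jensen in the two independent copies $Y,Y'$.
\item Definiteness follows by testing $K_f=0$ against a countable dense family of $\varphi$.
\end{itemize}
You instead identify the conditional quantity with $\|A_f\|_{S_4}^4$ and import standard Schatten-class facts: the $S_4$ triangle inequality, H\"older, the ideal inequality, and $\|\cdot\|_{S_4}\le\|\cdot\|_{S_2}$.

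The underlying mathematics is the same. The paper's Cauchy--Schwarz step is the special case $\operatorname{tr}(A_f^*A_fA_g^*A_g)\le\|A_f^*A_f\|_{S_2}\|A_g^*A_g\|_{S_2}$ of the H\"older inequality you cite. Its four-term expansion is a hand proof that $A\mapsto\|AA^*\|_{S_2}^{1/2}$ is subadditive.

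What your version buys: it is shorter and explains why $\|\cdot\|_p$ is a norm, namely an $L^4(dU_{\le p})$ norm of pointwise $S_4$ norms. It also treats all three contractions uniformly as composition with an operator of norm at most one. State operations act on the right ($I-P_1$ and the isometric embedding $J$), and the site average acts on the left.

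What the paper's version buys: it is self-contained and needs no operator theory. It stays at the level of kernels, which is how the norm is manipulated elsewhere; for instance, \eqref{eq:gramfubini} in Lemma~\ref{lem:gram} is just $\|AA^*\|_{S_2}=\|A^*A\|_{S_2}$.

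Your treatment of measurability, a pointwise $S_4$ triangle inequality followed by scalar Minkowski in $U_{\le p}$, is exactly the order in which the paper argues.
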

\begin{proof}
Fix the state prefix $U_{\le p}$ and suppress it from the notation. Instead of $U_T$ in \eqref{eq:Xnorm}, we will simply write $u.$ Let
\[
 K_f(u,u')=\int f(u,Z)f(u',Z)\,dZ,
\]
and abusing the notation, let us denote 
\begin{equation}
 \|f\|_p^4
 :=\iint
       \left|
         \int
           f(u,Z)
           f(u,Z')\,du
       \right|^2
       dZ\,dZ'.
\end{equation}
for the conditional norm without averaging over $U_{\le p}$.

Fubini and Cauchy--Schwarz give
\begin{align*}
 \iint\left|\int f(u,Z)g(u,Z')\,du\right|^2dZ\,dZ'
 &=\int K_f(u,u')K_g(u,u')\,du\,du'
 \\
& \le\|K_f\|_2\|K_g\|_2 =  \|f\|_p^2  \|g\|_p^2.
\end{align*}
Averaging and applying Cauchy--Schwarz in $U_{\le p}$ proves \eqref{eq:mixedkernel} for unconditional norms. 

Expand the kernel $\int(f+g)(u,Z)(f+g)(u,Z')\,du$ into its four terms; Minkowski's inequality and the mixed bound give
\[
 \|f+g\|_p^2\le\|f\|_p^2+2\|f\|_p\|g\|_p+\|g\|_p^2 ,
\]
which gives the triangle inequality at a fixed state prefix. Taking the $L^4$ norm over $U_{\le p}$ and applying Minkowski's inequality once more gives the triangle inequality for the original norm \eqref{eq:Xnorm}.

For the state-variable contractions, let $P$ be either
the centering operator $I-\E_u$ or conditional expectation
over a subcollection of the coordinates of $u$. The same
operator acts on $f(\cdot,Z)$ for every $Z$. Fubini gives
\[
 K_{Pf}(u,u')
 =P_uP_{u'}K_f(u,u'),
\]
where the two operators act separately on the two state
arguments. Conditional expectation and centering are
contractions in $L^2$, by Jensen's inequality and the
variance identity, respectively. Applying these contractions
successively gives
\[
 \int |K_{Pf}(u,u')|^2\,du\,du'
 \le
 \int |K_f(u,u')|^2\,du\,du'.
\]
By Fubini, these integrals are the conditional fourth powers
of the corresponding norms. Averaging over $U_{\le p}$
therefore proves the state-variable contractions.

For the site-variable contraction, write the full site tuple
as $(Z,Y)$. Since the two factors are averaged separately,
\[
 \int(\E_Y f)(u,Z)(\E_Y f)(u,Z')\,du
 =
 \iint
 \left(\int f(u,Z,Y)f(u,Z',Y')\,du\right)
 dY\,dY'.
\]
Jensen's inequality in the independent variables $Y,Y'$
bounds the square of this expression by
\[
 \iint
 \left|\int f(u,Z,Y)f(u,Z',Y')\,du\right|^2
 dY\,dY'.
\]
Integrating over $Z,Z'$ and averaging over $U_{\le p}$
proves $\|\E_Y f\|_p\le\|f\|_p$.

For \eqref{eq:normconversion}, Cauchy--Schwarz gives, with
the state prefix fixed,
\[
 \left|\int e(u,Z)e(u,Z')\,du\right|^2
 \le
 \left(\int |e(u,Z)|^2\,du\right)
 \left(\int |e(u,Z')|^2\,du\right).
\]
Integrating over $Z,Z'$ and then averaging over $U_{\le p}$
gives the first inequality in \eqref{eq:normconversion}.
Since $|e|\le M$, the conditional integral
$\int |e|^2\,du\,dZ$ is at most $M^2$, which gives the
second inequality.

To check nondegeneracy of the original norm \eqref{eq:Xnorm}, suppose that $\|f\|_p=0$.
Then, for almost every retained state prefix,
$K_f=0$ almost everywhere. Consequently, for every
$\varphi\in L^2(du)$,
\[
 \int\left|\int\varphi(u)f(u,Z)\,du\right|^2dZ
 =
 \iint
 \varphi(u)K_f(u,u')\varphi(u')\,du\,du'
 =0.
\]
Taking $\varphi$ in a countable dense subset of $L^2(du)$
shows that $f=0$ almost everywhere.
\end{proof}

\section{Shape probabilities}\label{app:shapes}

We prove Lemma \ref{lem:product} by successively removing
replicas. Starting from a shape $T$, apply the first-split
construction of Section \ref{sec:shapes} repeatedly to
its nonsingleton child shapes. Call a replica group
\emph{simple} if all its children are singletons. This
includes groups in which all replicas have the same
terminal label, by the convention in Section
\ref{sec:shapes}. Following nonsingleton children always
reaches a simple group, so a removal can begin whenever
at least two replicas remain.

\paragraph{Removing one replica.}
For the auxiliary cascade rooted at $p$, write
\[
 \zeta_{j-1}^{(p)}
 =
 \begin{cases}
  0,&j=p,\\
  \zeta_{j-1},&p<j\le r,
 \end{cases}
 \qquad \zeta_r=1.
\]
Suppose that a shape $T$ has $N\ge2$ replicas and contains
a simple group of $m\ge2$ replicas with split depth $d$. Let $T^-$ be the shape obtained by removing
one of these replicas. Then
\begin{equation}\label{eq:pruningfactor}
 \mu_N^p(T)
 =
 \frac{(m-1)\zeta_d-\zeta_{d-1}^{(p)}}{N-1}
 \,\mu_{N-1}^p(T^-).
\end{equation}
The numerator depends only on the simple group from
which a replica is removed; the denominator counts all the remaining
replicas in the shape.

Here is the short consequence of the GG identities
that proves \eqref{eq:pruningfactor}. Conditional on
the shape of $n$ replicas, consider a cluster at depth
$j$ containing $s$ of them. Apply the GG identities
with one of these $s$ replicas distinguished and
with the test that the new replica belongs to the
same depth-$j$ cluster. The conditional probability
of this event is
\[
 \frac{1-\zeta_{j-1}^{(p)}}{n}
 +\frac{s-1}{n}
 =
 \frac{s-\zeta_{j-1}^{(p)}}{n}.
\]
Indeed, the first numerator is the corresponding
two-replica probability, and precisely $s-1$ of the
other old replicas belong to this cluster.
For $n=1$, the same formula follows directly from
the two-replica law.

By exchangeability, the replica removed from $T$
may be taken to be the last one. Suppose first that
$d<r$. In $T^-$, the depth-$d$ cluster in question
contains $m-1$ replicas, lying in distinct clusters
at depth $d+1$. To recover $T$, the new replica must
join the depth-$d$ cluster and avoid these $m-1$
occupied depth-$(d+1)$ clusters. By ultrametricity,
this also specifies its overlaps with all replicas
outside the cluster. Its conditional probability is
therefore
\[
 \frac{m-1-\zeta_{d-1}^{(p)}}{N-1}
 -(m-1)\frac{1-\zeta_d}{N-1}
 =
 \frac{(m-1)\zeta_d-\zeta_{d-1}^{(p)}}{N-1}.
\]
If $d=r$, the new replica must join a terminal state
occupied by $m-1$ replicas. Its conditional probability
is
$
 \frac{m-1-\zeta_{r-1}}{N-1},
$
which is the same formula with $\zeta_r=1$.
This proves \eqref{eq:pruningfactor}. The identity
also applies when $m=2$, when the removal leaves a
singleton in place of that group.

\begin{proof}[Proof of Lemma \ref{lem:product}]
\emph{(a)}
Suppose that $a(T)>p$. Repeatedly remove one replica
from a simple group until only one replica remains.
Every split depth encountered is greater than
$p$, since deleting replicas cannot decrease their
first split. Thus
$\zeta_{d-1}^{(p)}=\zeta_{d-1}$ in every factor of
\eqref{eq:pruningfactor}.
The same sequence of removals has exactly the same
factors under $\mu^p$ and $\mu^0$, and both sequences
end with the one-replica probability $1$.
Consequently, $\mu_k^p(T)=\mu_k^0(T)$.

\medskip
\emph{(b)}
We use induction on the number $k$ of replicas.
Put $a=a(T)$ and let $T_1,\ldots,T_b$ be the child
shapes, with sizes $k_1,\ldots,k_b$.

If all children are singletons, the event specifying
$T$ requires any chosen pair of replicas to have
last-common-ancestor depth $a$. Hence
$ \mu_k^p(T)\le w_a^{(p)}.$
Since all child probabilities equal one, this is
\eqref{eq:rootdom}. This case includes $a=r$, with
the singleton-child convention for replicas having
one terminal label, and in particular starts the induction at
$k=2$.

Otherwise, choose a child $T_i$ with $k_i\ge2$.
Within this child, choose a simple group of $m\ge2$
replicas with split depth $d>a$, and remove one of
its replicas. Denote the resulting full shape
by $T^-$ and the modified child shape by $T_i^-$.
The first split remains $a$, and all other child
shapes are unchanged.

Set  $A=(m-1)\zeta_d-\zeta_{d-1}.$
Because $d>a\ge p$, the numerator in
\eqref{eq:pruningfactor} is the same whether the
removal is performed in the whole shape under
$\mu^p$ or in the child shape under $\mu^a$.
Thus
\[
 \mu_k^p(T)
   =\frac{A}{k-1}\mu_{k-1}^p(T^-),
 \qquad
 \mu_{k_i}^a(T_i)
   =\frac{A}{k_i-1}\mu_{k_i-1}^a(T_i^-).
\]
Applying the induction hypothesis to $T^-$ gives
\begin{align*}
 \mu_k^p(T)
 &\le
 \frac{A}{k-1}\,
 w_a^{(p)}\,
 \mu_{k_i-1}^a(T_i^-)
 \prod_{j\ne i}\mu_{k_j}^a(T_j)\\
 &=
 \frac{k_i-1}{k-1}\,
 w_a^{(p)}
 \prod_{j=1}^b\mu_{k_j}^a(T_j) \le
 w_a^{(p)}
 \prod_{j=1}^b\mu_{k_j}^a(T_j).
\end{align*}
This proves \eqref{eq:rootdom}. The comparison of
the two denominators is the only inequality needed
while pruning a nonsingleton child.
\end{proof}


\begin{thebibliography}{99}
\addcontentsline{toc}{section}{References}

\bibitem{ASS} M. Aizenman, R. Sims and S. L. Starr. Extended variational principle for the Sherrington-Kirkpatrick spin-glass model. \emph{Phys. Rev. B} \textbf{68} (2003), 214403.

\bibitem{Aldous} D. Aldous. Representations for partially exchangeable arrays of random variables. \emph{J. Multivariate Anal.} \textbf{11} (1981), 581--598.

\bibitem{Austin} T. Austin. Exchangeable random measures. \emph{Ann. Inst. H. Poincar\'e Probab. Statist.} \textbf{51} (2015), 842--861.

\bibitem{AP} T. Austin and D. Panchenko. A hierarchical version of the de Finetti and Aldous-Hoover representations. \emph{Probab. Theory Relat. Fields} \textbf{159} (2014), 809--823.

\bibitem{Bolthausen} E. Bolthausen and A.-S. Sznitman. On Ruelle's probability cascades and an abstract cavity method. \emph{Comm. Math. Phys.} \textbf{197} (1998), 247--276.

\bibitem{Coja2} A. Coja-Oghlan, K. Panagiotou. The asymptotic $K$-sat threshold. \emph{Adv. Math.} \textbf{288} (2016), 985--1068.

\bibitem{CojaPerkins} A. Coja-Oghlan and W. Perkins.
Spin systems on Bethe lattices.
\emph{Comm. Math. Phys.} \textbf{372} (2019), 441--523.


\bibitem{Concetti} F. Concetti.
The full replica symmetry breaking in the Ising spin glass on random
regular graph.
\emph{J. Statist. Phys.} \textbf{173} (2018), 1459--1483.


\bibitem{DerridaREM} B. Derrida. Random-energy model: an exactly solvable model of disordered systems. \emph{Phys. Rev. B} \textbf{24} (1981), 2613--2626.

\bibitem{DeSanctis} L. De Sanctis.
Random multi-overlap structures and cavity fields in diluted spin glasses.
\emph{J. Statist. Phys.} \textbf{117} (2004), 785--799.


\bibitem{DSS}
J. Ding, A. Sly and N. Sun.
Proof of the satisfiability conjecture for large $k$.
\emph{Ann. of Math. (2)} \textbf{196} (2022), no.~1, 1--388.

\bibitem{FLRZ} S. Franz, M. Leone, F. Ricci-Tersenghi and R. Zecchina.
Exact solutions for diluted spin glasses and optimization problems.
\emph{Phys. Rev. Lett.} \textbf{87} (2001), 127209.


\bibitem{FL} S. Franz and M. Leone. Replica bounds for optimization problems and diluted spin systems. \emph{J. Statist. Phys.} \textbf{111} (2003), 535--564.



\bibitem{GG} S. Ghirlanda and F. Guerra. General properties of overlap probability distributions in disordered spin systems. Towards Parisi ultrametricity. \emph{J. Phys. A} \textbf{31} (1998), 9149--9155.

\bibitem{Guerra} F. Guerra. Broken replica symmetry bounds in the mean field spin glass model. \emph{Comm. Math. Phys.} \textbf{233} (2003), 1--12.

\bibitem{Hoover} D. N. Hoover. Row-column exchangeability and a generalized model for probability. In \emph{Exchangeability in Probability and Statistics}, North-Holland (1982), 281--291.

\bibitem{Kallenberg} O. Kallenberg. On the representation theorem for exchangeable arrays. \emph{J. Multivariate Anal.} \textbf{30} (1989), 137--154.

\bibitem{MMZ}
S. Mertens, M. M\'ezard and R. Zecchina.
Threshold values of random $K$-sat from the cavity method.
\emph{Random Structures Algorithms} \textbf{28} (2006), 340--373.

\bibitem{MPV} M. M\'ezard, G. Parisi and M. A. Virasoro. \emph{Spin Glass Theory and Beyond}. World Scientific Lecture Notes in Physics, Vol. 9, World Scientific, Teaneck, NJ (1987).

\bibitem{MP1} M. M\'ezard and G. Parisi. The Bethe lattice spin glass revisited. \emph{Eur. Phys. J. B} \textbf{20} (2001), 217--233.

\bibitem{MPZ} M. M\'ezard, G. Parisi and R. Zecchina.
Analytic and algorithmic solution of random satisfiability problems.
\emph{Science} \textbf{297} (2002), 812--815.


\bibitem{MZcavity} M. M\'ezard and R. Zecchina.
Random $K$-satisfiability problem: From an analytic solution to an
efficient algorithm.
\emph{Phys. Rev. E} \textbf{66} (2002), 056126.


\bibitem{MPzero} M. M\'ezard and G. Parisi.
The cavity method at zero temperature.
\emph{J. Statist. Phys.} \textbf{111} (2003), 1--34.



\bibitem{OAI} OpenAI. The M\'ezard--Parisi formula
for diluted spin glasses.
\href{https://github.com/openai/math/blob/main/preprints/The-Mezard-Parisi-formula-for-diluted-spin-glasses-September-23-2026/paper.pdf}{OpenAI Math Release preprint}
(2026).

\bibitem{Spins} D. Panchenko. Spin glass models from the point of view of spin distributions. \emph{Ann. Probab.} \textbf{41} (2013), 1315--1361.

\bibitem{Ultra} D. Panchenko. The Parisi ultrametricity conjecture. \emph{Ann. of Math.} \textbf{177} (2013), 383--393.

\bibitem{PPF} D. Panchenko. The Parisi formula for mixed $p$-spin models. \emph{Ann. Probab.} \textbf{42} (2014), 946--958.

\bibitem{SKbook} D. Panchenko. \emph{The Sherrington-Kirkpatrick Model}. Springer Monographs in Mathematics, Springer, New York (2013).

\bibitem{1RSB} D. Panchenko. Structure of $1$-RSB asymptotic Gibbs measures in the diluted $p$-spin models. \emph{J. Statist. Phys.} \textbf{155} (2014), 1--22.

\bibitem{HEPS} D. Panchenko. Hierarchical exchangeability of pure states in mean field spin glass models. \emph{Probab. Theory Relat. Fields} \textbf{161} (2015), 619--650. 

\bibitem{rRSB} D. Panchenko. Structure of finite-RSB asymptotic Gibbs measures in the diluted spin glass models. \emph{J. Statist. Phys.} \textbf{162} (2016), 1--42. 

\bibitem{GGcavInput} D. Panchenko. Ghirlanda--Guerra identities and cavity equations for diluted spin glass models. Preprint (2026).

\bibitem{PT} D. Panchenko and M. Talagrand. Bounds for diluted mean-fields spin glass models. \emph{Probab. Theory Relat. Fields} \textbf{130} (2004), 319--336.

\bibitem{Parisi79} G. Parisi. Infinite number of order parameters for spin-glasses. \emph{Phys. Rev. Lett.} \textbf{43} (1979), 1754--1756.

\bibitem{Parisi80} G. Parisi. A sequence of approximated solutions to the S-K model for spin glasses. \emph{J. Phys. A} \textbf{13} (1980), L115--L121.

\bibitem{ParisiBethe} G. Parisi.
The marginally stable Bethe lattice spin glass revisited.
\emph{J. Statist. Phys.} \textbf{167} (2017), 515--542.

\bibitem{Ruelle} D. Ruelle. A mathematical reformulation of Derrida's REM and GREM. \emph{Comm. Math. Phys.} \textbf{108} (1987), 225--239.

\bibitem{SK} D. Sherrington and S. Kirkpatrick. Solvable model of a spin-glass. \emph{Phys. Rev. Lett.} \textbf{35} (1975), 1792--1796.

\bibitem{Talagrand} M. Talagrand. \emph{Mean Field Models for Spin Glasses}, Vols. I and II. Springer (2011).

\bibitem{TPF} M. Talagrand. The Parisi formula. \emph{Ann. of Math.} \textbf{163} (2006), 221--263.

\end{thebibliography}
\end{document}